\documentclass[a4paper,12pt]{amsart}

\usepackage[english]{babel}
\usepackage[alphabetic,lite]{amsrefs}
\usepackage{adjustbox}

\usepackage[cmtip,arrow]{xy}
\usepackage{pb-diagram,pb-xy}

\usepackage{etoolbox}
\apptocmd{\sloppy}{\hbadness 10000\relax}{}{}

\newtheorem{theorem}{Theorem}[section]
\newtheorem{lemma}[theorem]{Lemma}
\newtheorem{proposition}[theorem]{Proposition}
\newtheorem{cor}[theorem]{Corollary}
\newtheorem{question}[theorem]{Question}
\newtheorem{conjecture}[theorem]{Conjecture}
\theoremstyle{definition}
\newtheorem{definition}[theorem]{Definition}
\newtheorem{example}[theorem]{Example}
\theoremstyle{remark}
\newtheorem{remark}[theorem]{Remark}

\newcommand{\NN}{\mathbb{N}} 
\newcommand{\RR}{\mathbb{R}} 
\newcommand{\CC}{\mathbb{C}} 
\newcommand{\mat}[3]{{\mathrm{Mat}\left( #1 \times #2 ; \, #3 \right)}} 
\newcommand{\udisk}{\mathbb{D}} 
\newcommand{\sdisk}{\mathbb{G}} 
\newcommand{\slgrp}{\mathrm{SL}} 
\newcommand{\glgrp}{\mathrm{GL}} 
\newcommand{\slalg}{\mathfrak{sl}} 
\newcommand{\glalg}{\mathfrak{gl}} 
\newcommand{\sball}{\Omega} 
\newcommand{\aut}[1]{\mathrm{Aut} \left( #1 \right)} 
\newcommand{\faut}[1]{\mathrm{Aut}^{\pi}\!\! \left( #1 \right)} 
\newcommand{\cfaut}[1]{\mathrm{Aut}^{\pi}_0\! \left( #1 \right)} 
\newcommand{\hol}{\mathcal{O}} 
\newcommand{\boundary}{\partial} 
\newcommand{\id}{\mathrm{id}}
\newcommand{\cont}{\mathcal{C}}

\title[The fibred density property and the spectral ball]{The fibred density property and the automorphism group of the spectral ball}
\author{Rafael~B.~Andrist}
\address{Rafael~B.~Andrist \\ School of Mathematics and Natural Sciences \\ University of Wuppertal \\ Germany}
\email{rafael.andrist@math.uni-wuppertal.de}

\author{Frank~Kutzschebauch}
\address{Frank~Kutzschebauch \\ Institute of Mathematics \\ \hfill University of Bern \\ Switzerland}
\email{frank.kutzschebauch@math.unibe.ch}

\subjclass[2010]{Primary 32M17, 32A07, Secondary 32M25, 32M10}
\keywords{density property, fibred density property, Andersen-Lempert theory, homogeneous spaces, spectral ball, automorphism group}

\begin{document}

\begin{abstract}
We generalize the notion of the density property for complex manifolds to holomorphic fibrations, and introduce the notion of the fibred density property. We prove that the natural fibration of the spectral ball over the symmetrized polydisc enjoys the fibred density property and describe the automorphism group of the spectral ball.
\end{abstract}

\maketitle

\section{Introduction and Results}

The density property for complex manifolds has been introduced by Varolin \cites{Varolin1,Varolin2} in 2000 and has its orgin in the work of Anders\'en \cite{AL1990} and Anders\'en--Lempert \cite{AL1992} in the early 1990s.
It characterizes complex manifolds with large automorphism groups. Since then the so-called Anders\'en--Lempert theory has developed rapidly. It has many applications for geometric questions in Several Complex Variables and has contributed a lot to a better understanding of large holomorphic automorphism groups. For a recent overview of the theory we refer to \cite{densitysurvey}.

In this paper we introduce a parametrized version of the density property for holomorphic fibrations where the density property holds only in the direction of possibly singular fibres which are parametrized by the base space. For a comparison we restate the definition of the density property.

\begin{definition}
Let $X$ be a complex manifold. We say that $X$ has the \emph{density property} if the Lie algebra generated by all holomorphic $\CC$-complete vector fields on $X$ is dense (w.r.t.\ compact-open topology) in the Lie algebra of all holomorphic vector fields on $X$.
\end{definition}

\begin{definition}
We call a holomorphic surjection $\pi \colon X \to Y$ between complex manifolds a \emph{holomorphic fibration}. We say that the fibration has the \emph{fibred density property} if the Lie algebra generated by all holomorphic $\CC$-complete vector fields on $X$ tangent to the fibres of $\pi$ is dense (w.r.t.\ compact-open topology) in the Lie algebra of all holomorphic vector fields on $X$ tangent to $\pi$.
\end{definition}

\begin{example}
\label{trivialexample}
The fibred density property is known for a trivial fibration: consider the projection $\pi \colon U \times V \to V$ for Stein manifolds $U$ and $V$ where $U$ has the density property. Then this fibration has the fibred density property, as a consequence of \cite{Varolin1}*{Lemma 3.5}. For the case $\CC^k \times \CC^n \to \CC^k$ with fibers $\CC^n, n \geq 2,$ see also \cite{ALpara}*{Corollary 2.2}.
\end{example}

\smallskip

In this paper we will first develop the general theory for the fibred density property (section \ref{sectionAL}) and then prove the fibred density property for a well-known quotient map from classical invariant theory. This enables us to describe the automorphism group of the spectral ball.

\bigskip

Concerning the general theory the following theorem is a fibred version of the classical Anders\'en--Lempert theorem \cites{AL1990, AL1992, Varolin1, Varolin2}, for a special case see also  \cite{ALpara}*{Theorem 2.3}.

\begin{theorem}
\label{thmfibredAL}
Let $X$ be a Stein manifold and let $\pi \colon X \to Y$ be a holomorphic fibration with the fibred density property. Let $\Omega \subseteq X$ be an open subset and $\varphi_t \colon \Omega \to X, \; t \in [0,1],$ be a fibre-preserving $\cont^1$-homotopy of injective holomorphic maps such that $\varphi_0$ is the natural embedding $\Omega \hookrightarrow X$ and $\varphi_t(\Omega)$ is Runge in $X$ for all $t \in [0,1]$. Then there exists a fibre-preserving homotopy $\Phi_t \colon X \to X$ of holomorphic automorphisms such that $\Phi_0 = \id_X$ and $\Phi_t$ is arbitrarily close to $\varphi_t$ on $\Omega$ in the compact-open topology.

Moreover, $\Phi_t$ can be chosen as a composition of flow maps corresponding to complete fibre-preserving vector fields which generate a dense Lie subalgebra of the Lie algebra of the fibre-preserving holomorphic vector fields.
\end{theorem}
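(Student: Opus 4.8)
The plan is to run the classical Anders\'en--Lempert scheme \cites{AL1990,AL1992,Varolin1,Varolin2}, replacing throughout ``holomorphic vector field on $X$'' by ``holomorphic vector field on $X$ tangent to $\pi$'' and ``holomorphic automorphism of $X$'' by ``fibre-preserving holomorphic automorphism of $X$''. Differentiating the homotopy in $t$ produces the (continuous in $t$, holomorphic in space) non-autonomous vector field $V_t$ on $\varphi_t(\Omega)$ determined by $V_t(\varphi_t(x)) = \frac{\partial}{\partial t}\varphi_t(x)$; since the homotopy is fibre-preserving we have $\pi\circ\varphi_t = \pi|_\Omega$ for every $t$, so differentiation gives $d\pi(V_t) = 0$, i.e.\ each $V_t$ is tangent to $\pi$. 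After partitioning $[0,1]$ into short subintervals, the theorem is reduced to the one-step assertion: for $t'-t$ small, the transition map $\varphi_{t'}\circ\varphi_t^{-1}$, which is defined on the Runge domain $\varphi_t(\Omega)$ and $\cont^1$-close to the inclusion, is approximated on a prescribed compact, continuously in $t$, by a fibre-preserving automorphism of $X$ which is a finite composition of flow maps of complete vector fields tangent to $\pi$.

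The first ingredient is a \emph{fibred Runge approximation}. The sheaf of holomorphic vector fields on $X$ tangent to $\pi$ is the kernel of the $\mathcal O_X$-linear bundle morphism $d\pi\colon \mathcal O(TX) \to \mathcal O(\pi^{*}TY)$, hence a coherent $\mathcal O_X$-module; over points where the fibre of $\pi$ is singular this sheaf is only coherent and not locally free, but this does not interfere with what follows. Since $X$ is Stein, Cartan's Theorems~A and~B together with the Oka--Weil theorem (the latter applicable because $\varphi_t(\Omega)$ is Runge in $X$) show that any holomorphic vector field tangent to $\pi$ defined on $\varphi_t(\Omega)$ can be approximated, uniformly on $\mathcal O(X)$-convex compacts, by a global holomorphic vector field $W_t$ on $X$ tangent to $\pi$. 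Apply this to $V_t$, with the compact $K_t\subseteq\varphi_t(\Omega)$ chosen so large that $\varphi_t^{-1}(K_t)$ exhausts $\Omega$, all choices being uniform in $t$ by compactness of $[0,1]$.

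The second ingredient is a \emph{flow approximation}, and this is where the fibred density property enters. First, the flow of a complete holomorphic vector field tangent to $\pi$ is a fibre-preserving automorphism of $X$, because $\pi$ is constant along its flow lines, and so are finite compositions of such flows. Second, by the Euler/Trotter-type product formulas for sums, Lie brackets and scalar multiples, if a holomorphic vector field tangent to $\pi$ belongs to the Lie algebra \emph{generated by} complete vector fields tangent to $\pi$, then its local flow for small time is approximated, uniformly on compacts, by finite compositions of flows of complete vector fields tangent to $\pi$. By the fibred density property this Lie algebra is dense in the Lie algebra of all holomorphic vector fields tangent to $\pi$, and since the time-$\varepsilon$ flow depends continuously on the field on a compact set for $\varepsilon$ small (Cauchy estimates for the defining ODE), the same approximation holds for the small-time flow of $W_t$, and hence for the transition map $\varphi_{t'}\circ\varphi_t^{-1}$. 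Performing the approximating Lie combination continuously in $t$ --- possible by a partition-of-unity argument over $[0,1]$ --- produces a $\cont^0$-homotopy of fibre-preserving automorphisms.

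It remains to assemble the one-step homotopies. Partition $[0,1]$ as $0 = t_0 < \dots < t_N = 1$; inductively build fibre-preserving automorphisms $\Psi_j$ with $\Psi_0 = \id_X$, $\Psi_j$ close to $\varphi_{t_j}$ on a suitable compact of $\Omega$, and on $[t_j, t_{j+1}]$ insert the one-step homotopy of fibre-preserving automorphisms starting at $\id_X$ that approximates $\varphi_t\circ\varphi_{t_j}^{-1}$, post-composed with $\Psi_j$. Since each piece starts at the identity the resulting family $\Phi_t$ is continuous across every $t_j$, and $\Phi_0 = \id_X$; a geometric-series control of the errors on compacts chosen so that $\Psi_j$ maps the relevant set into the domain of the next step yields $\Phi_t$ as close to $\varphi_t$ on $\Omega$ as prescribed, uniformly in $t$; and by construction each $\Phi_t$ is a composition of flow maps of complete vector fields tangent to $\pi$, which gives the ``moreover''. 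I expect the principal obstacle to be the fibred Runge approximation in the presence of singular fibres --- that is, confirming that nothing in the estimates breaks when the sheaf of $\pi$-tangent fields fails to be locally free --- together with the bookkeeping needed to turn the pointwise-in-$t$ approximations into a genuine homotopy with uniform bounds; the flow-approximation lemma itself is classical and carries over once the fibre-preservation of the relevant flows is noted.
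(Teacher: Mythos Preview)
Your proposal is correct and follows essentially the same approach as the paper: differentiate the homotopy to obtain a time-dependent fibre-preserving vector field, partition $[0,1]$, use coherence of the sheaf $\ker d\pi$ together with the Runge hypothesis to globalize, and then invoke the Euler/Trotter product formulas (what the paper calls ``algorithms'') to approximate the flows by compositions of flows of the complete generators. Your write-up is in fact more careful than the paper's in a couple of places---you work on $\varphi_t(\Omega)$ rather than $\Omega$ when invoking the Runge property, and you spell out the inductive assembly with error control---but there is no substantive difference in strategy.
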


By a \emph{fibre-preserving homotopy} $\varphi_t \colon \Omega \to X$ we mean a homotopy such that $\pi \circ \varphi_i = \pi$. A vector field $\Theta$ is called \emph{fibre-preserving} if it is tangential to the fibres, i.e.\ $d\pi(\Theta) = 0$.
 
By $\faut{X}$ we denote the group of fibre-preserving holomorphic automorphisms of $X$, i.e.\
\begin{equation*}
\faut{X} := \left\{ f \colon X \to X \; \text{holomorphic automorphism}, \; \pi \circ f = \pi \right\}
\end{equation*}
and by $\cfaut{X}$ its path-connected component of identity.

By choosing $\Omega = X$ and $\varphi_t$ a path in $\faut{X}$ starting at $\id_X$, we obtain:
\begin{cor}
\label{thmfibredALcor}
Let $X$ be a Stein manifold and let $\pi \colon X \to Y$ be a holomorphic fibration with the fibred density property.
Then all fibre-preserving holomorphic automorphisms $\cfaut{X}$ path-connected to the identity can be approximated by compositions of time-$1$ maps of flows of complete fibre-preserving vector fields which generate a dense Lie subalgebra of the Lie algebra of fibre-preserving holomorphic vector fields.
\end{cor}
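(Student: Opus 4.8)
The plan is to deduce the statement directly from Theorem~\ref{thmfibredAL} by taking $\Omega = X$. Fix an automorphism $f \in \cfaut{X}$, a compact set $K \subseteq X$, and $\varepsilon > 0$. By definition of $\cfaut{X}$ there is a path $t \mapsto \varphi_t$, $t \in [0,1]$, in $\faut{X}$ with $\varphi_0 = \id_X$ and $\varphi_1 = f$; each $\varphi_t$ is an injective holomorphic self-map of $X$ satisfying $\pi \circ \varphi_t = \pi$, so the family $(\varphi_t)$ is a fibre-preserving homotopy of injective holomorphic maps with $\varphi_0$ the natural embedding $X \hookrightarrow X$ and with $\varphi_t(X) = X$ trivially Runge in $X$.

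The only step of the argument that is not entirely formal is that Theorem~\ref{thmfibredAL} asks for the homotopy to be of class $\cont^1$ in the parameter, whereas a priori $t \mapsto \varphi_t$ is merely continuous. This is settled by a standard smoothing of isotopies: reparametrizing so that the family is locally constant near $t = 0$ and $t = 1$, subdividing $0 = s_0 < \dots < s_N = 1$ finely enough that each $\varphi_{s_k} \circ \varphi_{s_{k-1}}^{-1}$ is as close to $\id_X$ as desired on prescribed compacta, and then approximating these nearly-constant pieces --- within $\faut{X}$ and with fixed endpoints --- by $\cont^1$-homotopies, one replaces $(\varphi_t)$ by a fibre-preserving $\cont^1$-homotopy of fibre-preserving holomorphic automorphisms of $X$ with the same endpoints. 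This smoothing is routine, and it is the point I would take most care over. With $(\varphi_t)$ so modified, all hypotheses of Theorem~\ref{thmfibredAL} hold for $\Omega = X$: $X$ is Stein, $\pi$ has the fibred density property, $(\varphi_t)$ is a fibre-preserving $\cont^1$-homotopy of injective holomorphic maps with $\varphi_0 = \id_X$, and $\varphi_t(X) = X$ is Runge in $X$.

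Theorem~\ref{thmfibredAL}, together with its concluding clause, then yields a fibre-preserving homotopy $\Phi_t \colon X \to X$ of holomorphic automorphisms with $\Phi_0 = \id_X$, with $\Phi_t$ arbitrarily close to $\varphi_t$ on $X$ in the compact-open topology, and with each $\Phi_t$ a composition of flow maps of complete fibre-preserving holomorphic vector fields that generate a dense Lie subalgebra of the Lie algebra of fibre-preserving holomorphic vector fields. Choosing the approximation fine enough makes $\Phi_1$ approximate $\varphi_1 = f$ on $K$ within $\varepsilon$. It remains only to observe that the time-$s$ flow map $\phi^s_\Theta$ of a complete fibre-preserving vector field $\Theta$ coincides with the time-$1$ flow map $\phi^1_{s\Theta}$ of the rescaled field $s\Theta$, which is again complete and fibre-preserving; hence $\Phi_1$ --- an $\varepsilon$-approximant of $f$ on $K$ --- is a composition of time-$1$ maps of complete fibre-preserving holomorphic vector fields, and by the fibred density property of $\pi$ the totality of such vector fields generates a dense Lie subalgebra of the Lie algebra of fibre-preserving holomorphic vector fields. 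Since $f$, $K$, and $\varepsilon$ were arbitrary, this is precisely the assertion of the corollary.
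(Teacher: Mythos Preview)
Your proof is correct and follows exactly the paper's approach: the paper derives the corollary in a single sentence before its statement, by applying Theorem~\ref{thmfibredAL} with $\Omega = X$ and $\varphi_t$ a path in $\faut{X}$ starting at $\id_X$. You are in fact more careful than the paper in flagging the $\cont^1$ regularity required of the homotopy and in noting the passage from time-$s$ to time-$1$ flows, neither of which the paper addresses explicitly.
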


\bigskip

Next we present an application of our general result concerning the fibred density to a concrete example, the so-called spectral ball. 
It appears naturally in Control Theory \cites{robustcontrol-proc, robustcontrol}, but is also of theoretical interest in Several Complex Variables.

\begin{definition}
The \emph{spectral ball} of dimension $n \in \NN$ is defined to be
\[
\sball_n := \{ A \in \mat{n}{n}{\CC} \;:\; \rho(A) < 1 \}
\]
where $\rho$ denotes the spectral radius, i.e.\ the modulus of the largest eigenvalue.
\end{definition}

The study of the group of holomorphic automorphisms of the spectral ball started with the work of Ransford and White \cite{RanWhi1991} in 1991 and continued by various authors \cites{BariRans,Rostand,Thomas}. Recently, Kosi{\'n}ski \cite{Kosinski2013} described a dense subgroup of the $2 \times 2$ spectral ball $\Omega_2$. We generalize this result to $\Omega_n$ for $n \geq 2$ with another approach using the fibred density property.

The spectral ball $\sball_n$ can also be understood in the following way: Denote by $\sigma_1, \dots, \sigma_n \colon \CC^n \to \CC$ the elementary symmetric polynomials in $n$ complex variables. Let $ \mathrm{Eig} \colon \mat{n}{n}{\CC} \to \CC^n$ assign to each matrix a vector of its eigenvalues.
Then we denote by $\pi_1 := \sigma_1 \circ \mathrm{Eig}, \dots, \pi_n := \sigma_n \circ \mathrm{Eig}$ the elementary symmetric polynomials in the eigenvalues. By symmetrizing we avoid any ambiguities of the order of eigenvalues in the definition of $\mathrm{Eig}$ and obtain a polynomial map $\pi_1, \dots, \pi_n$, symmetric in the entries of matrices in $\mat{n}{n}{\CC}$, actually
\[
\chi_A(\lambda) = \lambda^n + \sum_{j=1}^n (-1)^j \cdot \pi_j(A) \cdot \lambda^{n-j}
\]
where $\chi_A$ denotes the characteristic polynomial of $A$.

Now we can consider the fibration $\pi := (\pi_1, \dots, \pi_n) \colon \Omega_n \to \sdisk_n$ of the spectral ball over the symmetrized polydisc $\sdisk_n := (\sigma_1, \dots, \sigma_n)(\udisk^n)$. A generic fibre, i.e.\ a fibre above a base point with no multiple eigenvalues, consists exactly of one equivalence class of similar matrices. Therefore it is natural to study the action of $\slgrp_n(\CC)$ on $\sball_n$ by conjugation.

A generic fibre is obviously a homogeneous space and hence smooth. A fibre above a base point with multiple eigenvalues decomposes into several strata of $\slgrp_n(\CC)$ orbits where the largest orbit is the orbit of a matrix with the largest possible Jordan blocks. The structure of these fibres is well-known in classical invariant theory, see e.g.\ \cite{Kraft}.

Because a generic fibre is a homogeneous space of the complex Lie group $\slgrp_n(\CC)$, it has the density property (according to  \cite{DoDvKa2010}). This does however not imply the fibred density property, since the dependence on the base point and the role of singular fibres is a priori not clear. However it motivates the investigation of fibre-preserving automorphisms by exploiting the homogeneity of the generic fibres. The most difficult part of our paper is to prove the \emph{fibred density property} for $\pi \colon \sball_n \to \sdisk_n$ (see Theorem \ref{fibredforspectral}) that enables us to determine a dense subgroup of the holomorphic automorphism group $\aut{\sball_n}$.

\begin{theorem}
\label{thmspectraldense}
The $\slgrp_n(\CC)$-shears and the $\slgrp_n(\CC)$-overshears together with matrix transposition and M\"obius transformations generate a dense subgroup (in compact-open topology) of the holomorphic automorphism group $\aut{\sball_n}$.
\end{theorem}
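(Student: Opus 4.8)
The plan is to reduce an arbitrary $F \in \aut{\sball_n}$, by composing with M\"obius transformations and matrix transposition, to an element of $\cfaut{\sball_n}$, and then to invoke Corollary~\ref{thmfibredALcor} together with Theorem~\ref{fibredforspectral}. The case $n=1$ being trivial ($\sball_1 = \udisk$), assume $n \ge 2$. First I would recall, or establish, the structural fact underpinning the whole approach: every $F \in \aut{\sball_n}$ descends along $\pi$, i.e.\ $\pi \circ F = f \circ \pi$ for a unique $f \in \aut{\sdisk_n}$ --- this ultimately rests on an intrinsic characterization of the fibres of $\pi$ inside $\sball_n$; see \cites{RanWhi1991,Rostand,Thomas,Kosinski2013}. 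Combined with the classical description of $\aut{\sdisk_n}$ for $n \ge 2$ as exactly the M\"obius transformations, each of which is induced by a matrix M\"obius transformation $F_m(A) = (\alpha A + \beta I)(\gamma A + \delta I)^{-1}$ with $m(z) = (\alpha z + \beta)/(\gamma z + \delta) \in \aut{\udisk}$ (well defined on $\sball_n$ since $\gamma A + \delta I$ is invertible there, and mapping $\sball_n$ into itself since the eigenvalues of $F_m(A)$ are the $m$-images of those of $A$), we may compose $F$ with the matrix M\"obius transformation realizing $f^{-1}$, which is one of the generators in the statement, and so reduce to the case $F \in \faut{\sball_n}$.

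Next, matrix transposition $T \colon A \mapsto A^{\mathsf{T}}$ preserves characteristic polynomials, hence lies in $\faut{\sball_n}$; but on a generic fibre --- a single $\slgrp_n(\CC)$-conjugacy class, and hence a homogeneous space of $\slgrp_n(\CC)$ --- the restriction of $T$ realizes the contragredient $g \mapsto g^{-\mathsf{T}}$ (the outer automorphism of $\slgrp_n(\CC)$ for $n \ge 3$, the nontrivial Weyl element for $n=2$), which lies in a non-identity connected component of the automorphism group of that fibre; consequently $T \notin \cfaut{\sball_n}$. One then shows that $T$ together with $\cfaut{\sball_n}$ exhausts $\faut{\sball_n}$ --- i.e.\ $\pi_0(\faut{\sball_n}) = \mathbb{Z}/2\mathbb{Z}$, generated by the class of $T$ --- by reducing the count of connected components to that of the automorphism group of a generic fibre. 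Replacing $F$ by $T \circ F$ if necessary, we may therefore assume $F \in \cfaut{\sball_n}$.

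Now comes the Anders\'en--Lempert step. By Theorem~\ref{fibredforspectral} the fibration $\pi \colon \sball_n \to \sdisk_n$ has the fibred density property, and --- this is where the detailed proof of that theorem enters --- the dense Lie subalgebra of fibre-preserving holomorphic vector fields produced there is generated by the shear and overshear fields of the conjugation action of $\slgrp_n(\CC)$: writing $\Theta_N(A) = [N,A]$ for the fundamental field of $N \in \slalg_n(\CC)$, these are the complete fields $h \cdot \Theta_N$ with $h$ holomorphic on $\sball_n$ and $\Theta_N(h) = 0$ (shears) or $\Theta_N(\Theta_N(h)) = 0$ (overshears), whose time-$1$ flow maps are exactly the $\slgrp_n(\CC)$-shears $A \mapsto \exp(h(A)N)\, A\, \exp(-h(A)N)$ and the $\slgrp_n(\CC)$-overshears of the statement, in the spirit of the homogeneous-space constructions of \cite{DoDvKa2010}. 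Corollary~\ref{thmfibredALcor} then exhibits $F \in \cfaut{\sball_n}$ as a locally uniform limit of finite compositions of such shears and overshears; undoing the two reductions above --- precomposition with the fixed biholomorphisms $F_m$ and $T$ preserves locally uniform convergence on compacta --- yields that the subgroup generated by the $\slgrp_n(\CC)$-shears, the $\slgrp_n(\CC)$-overshears, matrix transposition and the M\"obius transformations is dense in $\aut{\sball_n}$.

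The genuine technical core, which I would take as given here, is Theorem~\ref{fibredforspectral}: producing enough complete fibre-preserving holomorphic vector fields \emph{uniformly over the base $\sdisk_n$ and across the singular fibres} is the delicate part, and is what makes the fibred density property for $\pi$ hard. Granting it, the remainder is structural rather than deep; the points that need the most care are the three ingredients feeding the first two reductions --- the description of $\aut{\sdisk_n}$ for $n \ge 2$, the descent of automorphisms of $\sball_n$ along $\pi$, and the determination of the component group $\pi_0(\faut{\sball_n})$ --- each of which is, however, either available in the existing literature on the spectral ball or within reach of a direct argument.
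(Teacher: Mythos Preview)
Your overall strategy coincides with the paper's: reduce to $\faut{\sball_n}$ via a M\"obius transformation, then to $\cfaut{\sball_n}$ via transposition, then apply the fibred Anders\'en--Lempert machinery (Theorem~\ref{fibredforspectral} and Corollary~\ref{thmfibredALcor}). The first reduction is fine; your route through $\aut{\sdisk_n}$ is equivalent to the paper's direct use of \cite{RanWhi1991}*{Thm.~4}.

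The gap is in the second reduction. You assert $\pi_0(\faut{\sball_n}) = \mathbb{Z}/2\mathbb{Z}$ ``by reducing the count of connected components to that of the automorphism group of a generic fibre,'' but this is not a proof: there is no a priori reason why restriction to a single fibre should inject on $\pi_0$, nor is the component group of the fibre's automorphism group obvious (your outer-automorphism argument for $T \notin \cfaut{\sball_n}$ is already delicate for $n=2$, where $g \mapsto g^{-\mathsf{T}}$ is inner in $\slgrp_2(\CC)$). The paper avoids computing $\pi_0$ altogether and instead produces an explicit path. After the M\"obius normalisation one has $g \in \faut{\sball_n}$ with $g(0)=0$; by \cite{RanWhi1991}*{Thm.~2} the derivative $g'(0)$ is a linear automorphism of $\sball_n$, hence by \cite{RanWhi1991}*{Thm.~4, Cor.} of the form $A \mapsto G A G^{-1}$ or $A \mapsto G A^{t} G^{-1}$ with $G \in \slgrp_n(\CC)$. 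The $\cont^1$-homotopy $s \mapsto g(sA)/s$ connects $g$ to $g'(0)$ inside $\faut{\sball_n}$, and connectedness of $\slgrp_n(\CC)$ then connects $g'(0)$ to $\id$ or to $T$. This concrete scaling argument is what your proposal is missing; once you insert it, your proof is complete and matches the paper's.
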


The precise definitions of shears and overshears will be given in section \ref{sectionshear}. They are obtained as time-$1$ maps of certain re-parametrizations of flow maps of complete vector fields; in case of $\slgrp_n(\CC)$-shears and $\slgrp_n(\CC)$-overshears they are certain re-parametrizations of $1$-parameter subgroups of $\slgrp_n(\CC)$ which act on $\mat{n}{n}{\CC}$ by conjugation.
For the definition of M\"obius transformations of matrices we refer to equation \eqref{eqmoebius} on page \pageref{eqmoebius}.

\smallskip
Similar to the situation for the holomorphic automorphism group of $\CC^n, \; n \geq 2,$ it seems impossible to give an explicit set of algebraic generators for $\aut{\sball_n}$, since we prove the following in the last section:
\begin{theorem}
\label{thmmeagre}
The dense subgroup of $\aut{\sball_n}$ generated by the automorphisms in Theorem \ref{thmspectraldense} is a meagre subset of $\aut{\sball_n}$.
\end{theorem}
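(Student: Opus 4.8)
The proof I have in mind is a Baire category argument, in the spirit of the corresponding statement for $\aut{\CC^n}$. First I would record that $\aut{\sball_n}$, with the compact-open topology, is a Baire space: since $\sball_n$ is Stein (hence a second countable, locally compact manifold) and $\sball_n$ is an open, hence Polish, subset of $\mat{n}{n}{\CC}$, the space $\hol(\sball_n,\mat{n}{n}{\CC})$ is Polish; inside it $\hol(\sball_n,\sball_n)=\bigcap_m\{f:\sup_{K_m}\rho\circ f<1\}$ (for a compact exhaustion $(K_m)$ of $\sball_n$, using continuity of $\rho$) is a $G_\delta$, hence again Polish; and $\aut{\sball_n}$ embeds as a \emph{closed} subset of $\hol(\sball_n,\sball_n)^2$ via $\Phi\mapsto(\Phi,\Phi^{-1})$, because composition is continuous for the compact-open topology. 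Thus $\aut{\sball_n}$ is completely metrizable, in particular a Baire space, and it suffices to exhibit $G$ as a subset of a countable union of nowhere dense sets.

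Write the group $G$ generated by the automorphisms of Theorem~\ref{thmspectraldense} as $G=\bigcup_{k\ge 1}G_k$, where $G_k$ is the set of compositions of at most $k$ of the named generators and their inverses. It then suffices to show that each $G_k$ is nowhere dense, and for this I would isolate a structural constraint on the complex Jacobian $J_\Phi:=\det d\Phi\in\hol(\sball_n,\CC^\ast)$, continuous in $\Phi$. The point is that all the generators respect the fibration $\pi\colon\sball_n\to\sdisk_n$ (descending to $\aut{\sdisk_n}$), and that their Jacobians are of a very special shape: the $\slgrp_n(\CC)$-shears and $\slgrp_n(\CC)$-overshears are built from the conjugation action of $\slgrp_n(\CC)$ on $\mat{n}{n}{\CC}$, which is volume preserving since $\det\mathrm{Ad}(g)=1$, and from re-parametrizations by (first integrals, in particular base) functions; matrix transposition has constant Jacobian $(-1)^{\binom n2}$; and a Möbius transformation $\mu$ from \eqref{eqmoebius} has $J_\mu=h_\mu\circ\pi$ with $h_\mu$ the symmetric function $\prod_i\mu'(\lambda_i)\prod_{i<j}\big((\mu(\lambda_i)-\mu(\lambda_j))/(\lambda_i-\lambda_j)\big)^{2}$ of the eigenvalues. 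Multiplying Jacobians along a composition and using $\pi\circ\Phi=\sigma_\Phi\circ\pi$ with $\sigma_\Phi\in\aut{\sdisk_n}$, one gets that $J_\Phi=h\circ\pi$ for every $\Phi\in G$, with $h$ lying in an explicitly describable subgroup $\Lambda$ of $\hol(\sdisk_n,\CC^\ast)$ which is finitely generated modulo the $\aut{\sdisk_n}$-action (so that, after fixing $k$ and bounding the finitely many parameters in $\aut{\udisk}\times\aut{\sdisk_n}$, the relevant $h$'s range over a compact set). Hence $G_k$ is contained in a countable union of sets $J^{-1}(C)$ with $C$ compact in $\hol(\sball_n,\CC^\ast)$, and these are closed.

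The main obstacle is the remaining step: showing that these closed sets have empty interior, i.e.\ that the constraint is genuinely restrictive — that a ``generic'' automorphism of $\sball_n$ does not have Jacobian of this controlled shape. This is exactly where the fibred density property must be used. From Theorem~\ref{thmfibredAL} / Corollary~\ref{thmfibredALcor} one knows that the Lie algebra generated by the complete fibre-preserving fields is dense in the (infinite-dimensional) Lie algebra of all fibre-preserving holomorphic fields on $\sball_n$; combining this with the identity $\mathrm{div}[\Theta_1,\Theta_2]=\Theta_1(\mathrm{div}\,\Theta_2)-\Theta_2(\mathrm{div}\,\Theta_1)$ and with the fact (Cartan's Theorem~B on the Stein manifold $\sball_n$, applied to the relative tangent sheaf) that a holomorphic field on a generic fibre $\pi^{-1}(y_0)\cong\slgrp_n(\CC)/T$ — which has the density property — extends to a fibre-preserving field on $\sball_n$, one produces complete fibre-preserving fields whose time-$t$ flows $\phi_t$ have $J_{\phi_t}$ outside the controlled family for small $t\neq0$. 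Then $\Phi_0\mapsto\Phi_0\circ\phi_t$ converges to $\Phi_0$ while leaving $J^{-1}(C)$, so no point of $J^{-1}(C)$ is interior; by closedness each such set is nowhere dense, and $G\subseteq\bigcup_k G_k$ is meagre. The two delicate points to nail down are the precise description of $\Lambda$ forced by the definitions of Section~\ref{sectionshear}, and the approximation/extension argument converting the fibred density property into automorphisms, arbitrarily close to a prescribed one, that witness the failure of the constraint.
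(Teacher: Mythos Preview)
Your Baire setup is fine, but the central structural constraint is false, and the argument collapses there. You claim that every generator --- in particular every $\slgrp_n(\CC)$-overshear --- has Jacobian of the form $h\circ\pi$. An overshear is the time-$1$ flow of $f\Theta$ with $\Theta\in\{\Theta_{ab},\Xi_a\}$ and $\Theta^2 f=0$; since $\mathrm{div}\,\Theta=0$ one has $\mathrm{div}(f\Theta)=\Theta(f)$, and because $\Theta(\Theta f)=0$ this divergence is constant along the flow, so $J_{\psi_1}=e^{\Theta(f)}$. But $\Theta(f)$ is \emph{not} in general $\slgrp_n$-invariant: for $f=x_{11}$ and $\Theta=\Theta_{12}$ (a genuine overshear, cf.\ the example in Section~\ref{sectionshear}) one gets $\Theta(f)=x_{21}$, and $e^{x_{21}}$ certainly does not factor through $\pi$. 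Your parenthetical ``re-parametrizations by (first integrals, in particular base) functions'' conflates overshears with shears: only for shears is the coefficient a first integral of $\Theta$. The paper in fact makes exactly this point at the end of Section~\ref{sectionspectral} when explaining why overshears already fall outside the Ransford--White class. So there is no subgroup $\Lambda\subset\hol(\sdisk_n,\CC^\ast)$ capturing $J_\Phi$ for $\Phi\in G$, and the remainder of your outline (compactness of the relevant $h$'s, nowhere-density of $J^{-1}(C)$) has no footing.

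The paper's proof is of a different nature: a finite-dimensional dimension count on jets, following Anders\'en--Lempert. One shows (Proposition~\ref{prop-growth}, via Lemmas~\ref{growth-three}--\ref{lem-Xi-growth}) that $\dim\ker(\Theta_{ab}^2|P_m)$ and $\dim\ker(\Xi_a^2|P_m)$ grow in $m$ like polynomials of degree at most $n^2-1$, whereas $\dim J^{2m+1,0}_{\mathfrak g}(\sball_n)$ grows of degree $n^2$. For any fixed $k$ and $m$ large enough the differentiable map $\psi_k$ of Lemma~\ref{lemtruncationcommut} therefore goes from a space of strictly smaller dimension into the jet space, so its image is meagre; openness of $j_0^{2m+1}$ (Remark~\ref{jetremark}) then forces the set $C_k$ of $k$-fold overshear compositions to be meagre in $\faut{\sball_n}$, and Baire finishes. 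The fibred density property enters only to guarantee that the target jet space is genuinely large, not to manufacture automorphisms violating a Jacobian constraint.
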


Since many homogeneous spaces of complex Lie groups enjoy the density property, it is natural to ask the following question.

\begin{question}
For which holomorphic actions of a reductive group $G$ on a Stein manifold $X$ does
the map $\pi \colon X \to X/\!/G$ to the categorical quotient admit the fibred density property?
\end{question}

Acknowledgement: The authors would like to thank the referee for recommendations to improve the presentation and for pointing out some problems in the first version of this article.
Moreover they would like to thank A.~Liendo, P.-M.~Poloni, S.~Maubach, A.~van~den~Essen, H.~Derksen and A.~Nowicki for discussions about determining the dimension
growth of the kernel of a homogeneous derivation, which led to the formulation of  Conjecture \ref{derivationconjecture}.

\section{Anders\'en--Lempert theory for fibrations}
\label{sectionAL}

We follow the original idea of the Anders\'en--Lempert Theorem, see \cites{AL1990,AL1992} and also the survey article \cite{densitysurvey} and the textbook \cite{Forstneric-book}*{Sec. 4} for a more recent presentation.

\begin{definition}[\cite{flowstuff}*{p. 254} and \cite{Forstneric-book}*{Def. 4.8.1}]
Let $\Theta$ be a vector field on a complex manifold $X$, and let $(t, x) \mapsto A_t(x)$ be a continuous map to $X$, defined on an open subset of $\RR \times X$ containing $\{0\} \times X$ such that its $t$-derivative exists and is continuous. We say that A is \emph{algorithm} for $\Theta$ if we have for all $x \in X$ that
\begin{align*}
A_0(x) &= x\\
\left.\frac{\mathrm{d}}{\mathrm{d} t}\right|_{t=0} A_t(x) &= \Theta_x
\end{align*}
\end{definition}

Obviously, a flow map is always an algorithm whereas the converse does not need to be true. However, the following variant of Euler's method for solving an ODE works:

\begin{proposition}[\cite{flowstuff}*{Thm. 4.1.26} and \cite{Forstneric-book}*{Thm. 4.8.2}]
\label{propalgos}
Let $\Theta$ be a locally Lipschitz continuous vector field with flow $\varphi_t$ on a complex manifold $X$. Let $\Omega$ be the fundamental domain of $\Theta$ and $\Omega_+ := \Omega \cap (\RR_+ \times X)$. If $A_t$ is an algorithm for $\Theta$, then for all $(t, x) \in \Omega_+$ the $n$-th iterate $A^n_{t/n}(x)$ of the map $A_{t/n}$ is defined for sufficiently large $n = n(t, x) \in \NN$ and we have
\[
\lim_{n \to \infty} A^n_{t/n}(x) = \varphi_t(x)
\]
The convergence is uniform on compacts in $\Omega_+$. Conversely, if $t_0 > 0$ is such that $A^n_{t/n}(x)$ is defined for all $t \in [0, t_0]$ and all sufficiently large $n \in \NN$, and $\lim_{n \to \infty} A^n_{t/n}(x)$ exists, then $(t_0, x) \in \Omega_+$.
\end{proposition}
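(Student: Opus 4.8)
The plan is to recognise this as the classical convergence statement for Euler's polygonal method, now for a merely locally Lipschitz field on a manifold, and to prove it by a discrete Gr\"onwall estimate localised near the compact flow trajectory; the only genuine content is the passage from ``flow maps are algorithms'' (observed just above) to ``every algorithm's Euler iterates converge to the flow''.

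For the forward implication I fix $(t_0,x)\in\Omega_+$, so $\varphi_s(x)$ is defined on $[0,t_0]$ and $K:=\{\varphi_s(x):s\in[0,t_0]\}$ is compact. I choose a relatively compact open neighbourhood $N\supseteq K$ inside finitely many charts and work with a fixed Riemannian distance $d$; on $\overline N$ the field $\Theta$ is Lipschitz with constant $L$ and bounded by $B$. Two estimates drive everything. First, from $A_0=\id$, $\frac{d}{dt}\big|_{0}A_t=\Theta$ and joint continuity of the $t$-derivative one gets, in local coordinates, $A_h(y)=y+h\,\Theta_y+h\,r(h,y)$ with $\sup_{y\in\overline N}\lVert r(h,y)\rVert\to 0$ as $h\to 0^+$. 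Second, since $\Theta$ is $L$-Lipschitz, $\varphi_h(y)=y+h\,\Theta_y+O(h^2)$ uniformly on $\overline N$ (write $\varphi_h(y)-y-h\Theta_y=\int_0^h(\Theta_{\varphi_s(y)}-\Theta_y)\,ds$ and bound the integrand by $LBs$), and the flow is itself $L$-Lipschitz, $d(\varphi_h(y),\varphi_h(y'))\le e^{Lh}d(y,y')$. Combining these via the triangle inequality gives
\[
 d\bigl(A_h(y),\varphi_h(y')\bigr)\ \le\ e^{Lh}\,d(y,y')+h\,\eta(h),\qquad \eta(h)\to 0 .
\]
Setting $h=t_0/n$, $x_k:=A^k_h(x)$ and $d_k:=d\bigl(x_k,\varphi_{kh}(x)\bigr)$, and applying this with $y=x_k$, $y'=\varphi_{kh}(x)$ together with $\varphi_{(k+1)h}=\varphi_h\circ\varphi_{kh}$, yields $d_{k+1}\le e^{Lh}d_k+h\eta(h)$; since $d_0=0$,
\[
 d_k\ \le\ h\,\eta(h)\sum_{j=0}^{k-1}e^{Ljh}\ \le\ \frac{\eta(h)}{L}\bigl(e^{Lkh}-1\bigr)\ \le\ \frac{\eta(h)}{L}\bigl(e^{Lt_0}-1\bigr)\qquad(0\le k\le n).
\]
A short bootstrap makes this self-consistent: for $n$ large the right-hand side is $<d(K,\partial N)$ and $h$ is small enough that $A_h$ is defined on $\overline N$ (the domain of $A$ contains $\{0\}\times X$), so an induction on $k$ shows $x_k\in N$ for all $k\le n$ and every estimate above is legitimate. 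Letting $n\to\infty$ gives $d_n\to 0$, i.e.\ $A^n_{t_0/n}(x)\to\varphi_{t_0}(x)$. For a compact $Q\subseteq\Omega_+$ the relevant arcs fill out the compact set $\{\varphi_s(x):(t,x)\in Q,\ 0\le s\le t\}$, so $L,B,\eta$ and the final bound $\tfrac{\eta(t/n)}{L}(e^{Lt_{\max}}-1)$ are uniform, which yields convergence uniform on $Q$.

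The converse is the delicate point, and I expect it to be the main obstacle. The tool is the escape (maximality) lemma for flows: if $(t_0,x)\notin\Omega_+$, let $b_x\in(0,t_0]$ be the right lifespan, so $\varphi_s(x)$ exists on $[0,b_x)$ and leaves every compact subset of $X$ as $s\uparrow b_x$. One wants to show this contradicts $A^n_{t_0/n}(x)$ being eventually defined and convergent. The forward estimates restricted to subintervals $[0,\sigma]$ with $\sigma<b_x$ already show that $A^{\lceil\sigma n/t_0\rceil}_{t_0/n}(x)$ tracks $\varphi_\sigma(x)$ and that all intermediate iterates stay in a fixed compact tube around $\{\varphi_s(x):0\le s\le\sigma\}$; letting $\sigma\uparrow b_x$ forces the iterates themselves to escape every compact set, so they cannot converge. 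The subtlety is controlling the iterates past time $b_x$, where there is no comparison flow, and excluding that the polygonal path wanders out and returns. The clean route, which I would follow, is to first prove that if $A^n_{t_0/n}(x)$ is defined for all large $n$ and the limit exists then all intermediate iterates lie in one fixed compact set $K^\ast$, and then run the Gr\"onwall comparison ``as long as the flow stays in $K^\ast$'' to conclude $b_x>t_0$, a contradiction. As the statement is quoted verbatim from \cite{flowstuff}*{Thm.~4.1.26} and \cite{Forstneric-book}*{Thm.~4.8.2}, for the full technical details of this half I would ultimately refer to those sources.
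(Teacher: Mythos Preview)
The paper does not supply a proof of this proposition at all: it is stated as a quotation of \cite{flowstuff}*{Thm.~4.1.26} and \cite{Forstneric-book}*{Thm.~4.8.2} and used as a black box in the proof of Theorem~\ref{thmfibredAL}. So there is nothing to compare your argument against except the cited external sources.

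That said, what you have written is the standard Euler--Gr\"onwall argument and is essentially correct for the forward implication. Your one-step estimate $d(A_h(y),\varphi_h(y'))\le e^{Lh}d(y,y')+h\,\eta(h)$ is obtained correctly from the Taylor expansion of $A_h$, the integral remainder for $\varphi_h$, and the Lipschitz continuity of the flow; the discrete recursion and the bootstrap keeping the iterates inside $N$ are routine. The uniformity on compacts of $\Omega_+$ follows as you indicate. One minor point worth tightening: you work ``in local coordinates'' on a neighbourhood $N$ covered by finitely many charts, so the additive expansion $A_h(y)=y+h\Theta_y+\dots$ and the vector-field bound are chart-dependent; the clean way is to phrase the one-step estimate directly in terms of the Riemannian distance (both $A_h$ and $\varphi_h$ move points by at most $Ch$ from $y$, and their endpoints differ by $h\,\eta(h)$), which you effectively do anyway.

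For the converse you correctly identify the escape lemma as the mechanism and honestly flag that the argument is only sketched; since the paper itself simply cites the result, deferring the remaining details to \cite{flowstuff} and \cite{Forstneric-book} is exactly what the authors do.
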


For simplicity, we focus on the situation $t \geq 0$, but the same results hold for negative times by replacing $\Theta$ with $-\Theta$. The following lemma can be verified easily in local coordinates by Taylor series expansion, see e.g.\ \cite{Forstneric-book}*{Prop. 4.7.3}.
\begin{lemma}
\label{lemalgos}
Let $X$ be a complex manifold and let $\Theta$ and $\Xi$ be holomorphic vector fields on $X$ with flow maps or algorithms $\varphi_t$ and $\psi_t$. Then
\begin{enumerate}
\item $\varphi_t \circ \psi_t$ is an algorithm for $\Theta + \Xi$
\item $\psi_{-\sqrt{t}} \circ \varphi_{-\sqrt{t}} \circ \psi_{\sqrt{t}} \circ \varphi_{\sqrt{t}}$ is an algorithm for $[\Theta, \Xi]$.
\end{enumerate}
\end{lemma}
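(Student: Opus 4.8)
The plan is to reduce both assertions to a computation in a local holomorphic coordinate chart around an arbitrary point of $X$, comparing Taylor expansions in the time parameter. The only facts used are $\varphi_0=\psi_0=\id_X$, $\partial_t\varphi_t\big|_{t=0}=\Theta$, $\partial_t\psi_t\big|_{t=0}=\Xi$, and joint continuity near $\{0\}\times X$ of $(t,y)\mapsto\varphi_t(y)$, $(t,y)\mapsto\psi_t(y)$ and of the space-derivatives $D_y\varphi_t$, $D_y\psi_t$ (standard for flows; for algorithms the statement about $D_y$ follows from holomorphicity in $y$ via Cauchy estimates). A preliminary bookkeeping step checks that the composed maps in (1) and (2) are again defined on an open neighbourhood of $\{0\}\times X$ and are $\cont^1$ in $t$ there with jointly continuous $t$-derivative (chain rule); throughout I keep to $t\ge 0$, as in the running convention of this section. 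Since being an algorithm constrains only the $0$th- and $1$st-order $t$-jet, (1) requires a first-order expansion while (2) --- owing to the substitution $t\mapsto\sqrt t$ --- requires a second-order one.

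For (1), put $A_t:=\varphi_t\circ\psi_t$; then $A_0=\id_X$, and the chain rule gives
\[
\partial_t A_t(x)\big|_{t=0}
=\bigl(\partial_t\varphi_t\big|_{t=0}\bigr)\bigl(\psi_0(x)\bigr)
+D_y\varphi_0\big|_{x}\cdot\bigl(\partial_t\psi_t\big|_{t=0}\bigr)(x)
=\Theta_x+\id\cdot\Xi_x=(\Theta+\Xi)_x ,
\]
using $\psi_0(x)=x$ and $\varphi_0=\id_X$, so $D_y\varphi_0=\id$. Hence $A_t$ is an algorithm for $\Theta+\Xi$.

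For (2), set $s:=\sqrt t\ge 0$ and $g_s:=\psi_{-s}\circ\varphi_{-s}\circ\psi_s\circ\varphi_s$, so the map in (2) is $A_t=g_{\sqrt t}$. Expanding each factor to second order in $s$ in local coordinates --- $\varphi_s(y)=y+s\,\Theta(y)+\tfrac{s^2}{2}(D_y\Theta)\Theta(y)+o(s^2)$ for the flow of $\Theta$, and likewise for $\psi$, $\varphi_{-s}$, $\psi_{-s}$ --- and multiplying the four expansions out, one finds that the constant term is $x$, the $O(s)$ terms cancel, and the surviving $O(s^2)$ term is $s^2[\Theta,\Xi]_x$ (the classical group-commutator-of-flows identity; the ordering in the statement is the one making the sign $+$ for the paper's bracket convention). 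Thus $g_0=\id_X$, $\partial_s g_s\big|_{s=0}=0$, and $g_s(x)=x+s^2[\Theta,\Xi]_x+o(s^2)$ uniformly on compacts; substituting $s=\sqrt t$ gives $A_t(x)=x+t[\Theta,\Xi]_x+o(t)$, so $A_0=\id_X$ and $\partial_t A_t\big|_{t=0}=[\Theta,\Xi]$. For continuity of $\partial_t A_t$ up to $t=0$ one uses that, for $t>0$, $\partial_t A_t(x)=\tfrac{1}{2\sqrt t}\,\partial_s g_s(x)\big|_{s=\sqrt t}$ with $\partial_s g_s(x)=2s[\Theta,\Xi]_x+O(s^2)$ (here $g$ is smooth), so the quotient equals $[\Theta,\Xi]_x+O(\sqrt t)$, uniformly on compacts.

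The point requiring care is the second-order expansion in (2): it uses not only that $\varphi_t,\psi_t$ are twice $t$-differentiable at $0$ but that $\partial_t^2\varphi_t\big|_{t=0}=(D_y\Theta)\Theta$ and $\partial_t^2\psi_t\big|_{t=0}=(D_y\Xi)\Xi$, which holds automatically for genuine flows of holomorphic vector fields (indeed these are real-analytic in $t$) but need not hold for a map satisfying only the $\cont^1$ definition of an algorithm --- for such a map the surviving quadratic term picks up a correction measuring the mismatch of the second $t$-jets. Accordingly, in (2) the maps $\varphi_t,\psi_t$ should be read as flow maps, which is the situation in all later applications since the vector fields occurring there are complete; in (1), by contrast, only the first $t$-jet enters, so algorithms really are allowed. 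The remaining work --- multiplying out the four factors in (2) and verifying the domain and joint-continuity statements --- is elementary and can be left to a short coordinate computation or to \cite{Forstneric-book}*{Prop. 4.7.3}.
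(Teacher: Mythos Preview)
Your proof is correct and follows exactly the approach the paper indicates---Taylor expansion in local coordinates, with a reference to \cite{Forstneric-book}*{Prop.~4.7.3}---only you have written out the details the paper omits. Your observation that part~(2) genuinely requires the second $t$-jet of $\varphi_t,\psi_t$ to match that of the flow (hence needs flows rather than arbitrary algorithms as defined) is a valid and useful caveat that the paper's one-line treatment does not make explicit.
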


\begin{proof}[Proof of Theorem \ref{thmfibredAL}]
By $\mathcal{A}$ we denote a dense Lie subalgebra of complete fibre-preserving vector fields which is dense in the Lie algebra of fibre-preserving holomorphic vector fields. This Lie subalgebra exists by assumption (fibred density property).

We define a time-dependent vector field 
\[
\Theta_z^t := \dot{\varphi_t}(\varphi_t^{-1}(z))
\]
which is still tangent to the fibres of $\pi \colon X \to Y$.
For any $n \in \NN$ we can partition the interval $[0, 1]$ in $n$ intervals $[k/n, (k+1)/n]$ of length $1/n$ and consider the piecewise constant vector field
\[
\widehat{\Theta}_z^t := \Theta_z^{k/n} \; \text{for } t \in [k/n, (k+1)/n) 
\]

Let $\varphi_t^k$ denote the flow map of $\Theta_z^{k/n}$.
Because $\Omega \subseteq X$ is Runge and $X$ is Stein, we can approximate any fibre-preserving vector field on $\Omega$ by a fibre-preserving vector field on $X$, uniformly on compacts of $\Omega$. We remark that the sheaf of germs of fibre-preserving vector fields is (as the kernel of the map induced by $\pi$ between the coherent sheafs of sections of the tangent bundles) a coherent sheaf of $\hol(X)$ modules. A standard application of Cartan's Theorems A and B implies that the sections of any coherent sheaf over a Runge subset in a Stein space can be approximated by global sections.

By assumption we know that every such vector field can be approximated by vector fields from the Lie algebra $\mathcal A$. Proposition \ref{propalgos} and Lemma \ref{lemalgos} then show that we are able to approximate the flows of all the vector fields in the closure of $\mathcal A$ in the compact-open topology by the flows of the complete vector fields which generate $\mathcal A$.

The composition of the flows $\varphi_{\dots}^k \circ \dots \circ \varphi_{\dots}^1$ is the flow of $\widehat{\Theta}_z^t$. It remains only to show that in the limit $n \to \infty$ the flow of $\widehat{\Theta}^t$ converges uniformly on compacts to the flow of $\Theta^t$ which follows from the fact that these flows are tangent to each other at the times $k/n$.
\end{proof}

\section{Shears and Overshears for $\slgrp_n(\CC)$}
\label{sectionshear}

As a preparation for proving a fibred density property for the spectral ball, we need to study a special type of automorphisms, the so-called shears and overshears. They will serve us as building blocks for general automorphisms.

The following notion of generalized shears and overshears has been introduced by Varolin \cite{shearinvent}*{Section 3}.
\begin{definition}
Let $X$ be a complex manifold and let $\Theta$ be a $\CC$-complete vector field on $X$, i.e.\ such that its flow-map exists for all complex times. A vector field $f \cdot \Theta, \; f \in \hol(X),$ is called a \emph{$\Theta$-shear vector field} if $\Theta(f) = 0$. It is called a \emph{$\Theta$-overshear vector field} if $\Theta^2(f) = 0$. 
\end{definition}

\begin{example}
Let $X = \CC^2$ with coordinates $(z,w)$. Then $\Theta = \partial_{z}$ is obviously a $\CC$-complete vector field, with flow map $\phi_t(z,w) = (z+t,w)$. A $\partial_{z}$-shear vector field is of the form $f(w) \cdot \partial_{z}$ and a $\partial_{z}$-overshear vector field is of the form $(f(w) \cdot z + g(w)) \cdot \partial_{z}$ where $f,g \in \hol(\CC)$.
\end{example}

The following $\CC$-completeness result can be found also \cite{shearinvent}*{Section 3}, but without an explicit formula for the flow map. Our proof gives an explicit formula which will be needed in the applications.
\begin{lemma}
\label{lemshearflow}
Let $X$ be a complex manifold and let $\Theta$ be a $\CC$-complete vector field on $X$, then all $\Theta$-overshear vector fields are $\CC$-complete as well. In fact, if $\phi_t$ denotes the flow map of $\Theta$, the flow map $\psi_t$ of $f \cdot \Theta$ is  given by
\begin{align*}
\psi_t(z) &= \phi_{\displaystyle \varepsilon(t \Theta_z f) \cdot t f(z)}(z) 
\end{align*}
where $\varepsilon \colon \CC \to \CC$ is given by
\begin{align*}
\varepsilon(\zeta) &= \sum_{k=1}^\infty \frac{\zeta^{k-1}}{k!} = \frac{e^\zeta - 1}{\zeta}
\end{align*}
\end{lemma}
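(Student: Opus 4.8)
The plan is to verify the claimed formula directly by checking the two defining properties of a flow: the initial condition $\psi_0(z) = z$, and the ODE $\frac{d}{dt}\psi_t(z) = (f\cdot\Theta)_{\psi_t(z)}$. Since $\varepsilon(0) = 1$, the exponent in $\psi_t(z) = \phi_{\varepsilon(t\,\Theta_z f)\cdot t f(z)}(z)$ vanishes at $t=0$, so $\psi_0(z) = \phi_0(z) = z$ and the initial condition holds. The main work is the derivative computation, and the key structural observation that makes it tractable is this: because $f\cdot\Theta$ is a $\Theta$-overshear vector field, $\Theta^2(f) = 0$, which means that along any integral curve $s\mapsto \phi_s(z)$ of $\Theta$ the function $s \mapsto f(\phi_s(z))$ is an affine function of $s$, namely $f(\phi_s(z)) = f(z) + s\,(\Theta_z f)$. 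This linearity is exactly what allows the exponent to be written in closed form with the entire function $\varepsilon$.

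First I would fix $z$ and set $c := \Theta_z f$ and $a := f(z)$, so the claimed exponent is $E(t) := \varepsilon(ct)\cdot t a = a\cdot\frac{e^{ct}-1}{c}$ when $c\neq 0$ (and $E(t) = at$ when $c = 0$, which is the limiting case and can be treated by continuity or separately). Then $\psi_t(z) = \phi_{E(t)}(z)$, and by the chain rule $\frac{d}{dt}\psi_t(z) = E'(t)\cdot\Theta_{\phi_{E(t)}(z)} = E'(t)\cdot\Theta_{\psi_t(z)}$, using that $\frac{d}{ds}\phi_s(z) = \Theta_{\phi_s(z)}$. So it suffices to show $E'(t) = f(\psi_t(z))$. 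Now $E'(t) = a e^{ct}$, while by the affine-linearity observation applied with base point $z$ and parameter $E(t)$ we get $f(\psi_t(z)) = f(\phi_{E(t)}(z)) = f(z) + E(t)\cdot(\Theta_z f) = a + c\cdot a\frac{e^{ct}-1}{c} = a + a(e^{ct}-1) = a e^{ct}$. This matches $E'(t)$, completing the verification.

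Two points need a little care. One is justifying that $s\mapsto f(\phi_s(z))$ is affine: differentiating gives $\frac{d}{ds}f(\phi_s(z)) = (\Theta f)(\phi_s(z))$ and differentiating again gives $(\Theta^2 f)(\phi_s(z)) = 0$, so the first derivative is constant in $s$, hence equals its value $\Theta_z f$ at $s=0$; note this uses that $\phi_s$ is $\CC$-complete so the integral curve is defined for all complex times, and that $\Theta f$, $\Theta^2 f$ make sense globally. The second point is the $c = 0$ (pure shear) case, where $\varepsilon(0)=1$ and $E(t) = at$, $E'(t) = a = f(z) = f(\phi_{at}(z))$ since $f$ is $\Theta$-invariant; this is the classical shear formula and is consistent with the overshear formula by continuity of $\varepsilon$ at $0$. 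I do not expect a genuine obstacle here — the formula is essentially forced once one notices the affine-linearity of $f$ along $\Theta$-trajectories — but the cleanest exposition is to state that observation as the first step, reduce to the scalar identity $\frac{d}{dt}\bigl(a\tfrac{e^{ct}-1}{c}\bigr) = a e^{ct}$, and then feed it back through the chain rule; the only thing one must be slightly careful about is keeping $z$ fixed throughout so that $a$ and $c$ are genuine constants and not functions being differentiated.
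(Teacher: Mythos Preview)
Your proof is correct and follows the same overall strategy as the paper: verify directly that the proposed $\psi_t$ satisfies the flow ODE $\frac{d}{dt}\psi_t(z) = f(\psi_t(z))\,\Theta_{\psi_t(z)}$ together with the initial condition. Both arguments reduce via the chain rule to the identity $f(\psi_t(z)) = f(z)\,e^{t\,\Theta_z f}$.

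The one genuine difference is in how that identity is established. You observe up front that $\Theta^2 f = 0$ forces $s \mapsto f(\phi_s(z))$ to be affine, namely $f(\phi_s(z)) = f(z) + s\,\Theta_z f$; plugging in $s = E(t)$ then gives the identity by a one-line algebraic computation. The paper instead notes that $\Theta_{\psi_t(z)} f$ is constant in $t$ (again from $\Theta^2 f = 0$) and then matches all higher $t$-derivatives of $f(\psi_t(z))$ and $f(z)\,e^{t\,\Theta_z f}$ at $t=0$ to conclude they coincide as holomorphic functions. Your route is shorter and makes the underlying reason (affine behaviour of $f$ along $\Theta$-orbits) completely explicit; the paper's Taylor comparison is slightly more indirect but avoids having to split off the case $\Theta_z f = 0$. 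Either way the content is the same.
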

\begin{remark}
The flow map of a $\Theta$-shear takes the form
\begin{align*}
\psi_t(z) &= \phi_{t f(z)}(z) 
\end{align*}
In particular, if $\phi_t$ and $f$ are polynomial, then $\psi_t$ is polynomial as well.
\end{remark} 

\begin{proof} 
We calculate the time derivative of the given $\psi_t$
\begin{align*}
\frac{\mathrm{d}}{\mathrm{d} t} \psi_t(z) &= \frac{\mathrm{d}}{\mathrm{d} t} \left( \phi_{(\exp(t \Theta_z f) - 1) \cdot f(z) / \Theta_z f}(z) \right) \\
& = \Theta_z f \cdot \exp(t \Theta_z f) \cdot f(z) / \Theta_z f  \cdot {\dot{\phi}}_{(\exp(t \Theta_z f) - 1) \cdot f(z) / \Theta_z f}(z)\\
& = f(z) \cdot \exp(t \Theta_z f) \cdot \Theta_{\psi_t(z)}
\end{align*}
and consider now
\begin{align*}
\frac{\mathrm{d}}{\mathrm{d} t} f(\psi_t(z)) &= \mathrm{d}_{\psi_t(z)} f \circ  \dot{\phi}_{(\exp(t \Theta_z f) - 1) \cdot f(z) / \Theta_z f}(z) \cdot f(z) \cdot \exp(t \Theta_z f) \\
&= f(z) \cdot \exp(t \Theta_z f) \cdot \Theta_{\psi_t(z)} f
\end{align*}
Note that $\frac{\mathrm{d}}{\mathrm{d} t} \Theta_{\psi_t(z)} f = 0$ because of $\Theta^2 f = 0$. We can compare the higher order derivatives:
\begin{align}
\label{eqflow1}
\frac{\mathrm{d}^m}{\mathrm{d} t^m} f(\psi_t(z))
&= f(z) \cdot \exp(t \Theta_z f) \cdot(\Theta_z f)^{m-1} \cdot \Theta_{\psi_t(z)} f\\
\label{eqflow2}
\frac{\mathrm{d}^m}{\mathrm{d} t^m} f(z) \cdot \exp(t \Theta_z f)
&=f(z) \cdot \exp(t \Theta_z f) \cdot (\Theta_z f)^m
\end{align}
For $t = 0$ and for all $m \in \NN_0$ the values of \eqref{eqflow1} and \eqref{eqflow2} agree, hence $f(\psi_t(z)) = f(z) \cdot \exp(t \Theta_z f)$ and $\frac{\mathrm{d}}{\mathrm{d} t} \psi_t(z) = f(z) \cdot \Theta_{\psi_t(z)}$.
\end{proof}

We will call the time-$1$ maps of such $\CC$-complete vector fields \emph{ $\Theta$-shears} resp. \emph{$\Theta$-overshears}.

\bigskip
Observe that the action of $\glgrp_n(\CC)$ on $\mat{n}{n}{\CC}$, by conjugation, is not effective, its center is the ineffectivity, and we get an effective action of $\slgrp_n(\CC)$.
Moreover, as a linear representation this is the direct sum of the adjoint representation and a trivial one-dimensional representation, i.e.\ $\mat{n}{n}{\CC} \cong \slalg_n(\CC) \oplus \CC$, where the second summand is the subspace of scalar matrices.

In our context here we will focus on shears and overshears arising from the $\slgrp_n(\CC)$-action on $\Omega_n$ and on $\mat{n}{n}{\CC}$ by conjugation.

A $\Theta$-shear of a vector field $\Theta$ arising from the $\slgrp_n(\CC)$-action will be called a \emph{$\slgrp_n(\CC)$-shear} and a $\Theta$-overshear of such a vector field will be called a \emph{$\slgrp_n(\CC)$-overshear}.

\smallskip
By $E_{a b}$ with $a, b \in \{1, \dots, n\}$ we denote the elementary matrices in $\mat{n}{n}{\CC}$, i.e.\
\begin{equation*}
E_{a b} = \left( \delta_{a k} \delta_{b \ell} \right)_{k, \ell = 1}^n
\end{equation*}
We denote the following commutators as $H_a := [E_{a, a+1}, E_{a+1, a}] = \left( \delta_{a k} \delta_{a \ell} - \delta_{a+1, k} \delta_{a+1, \ell}  \right)_{k, \ell = 1}^n$ for $a = 1, \dots, n-1$. It is well-known that the $E_{a b}$ with $a \neq b$ together with the $H_a$ span the matrix Lie algebra $\slalg_n(\CC)$ as vector space over $\CC$. We need to write down explicitly the adjoint representation of $\slgrp_n(\CC)$ with vector fields and determine the action of these vector fields on polynomials.

For a matrix $V \in \slalg_n(\CC)$ and $X \in \mat{n}{n}{\CC}$ it well known that
\begin{equation*}
\left. \frac{\mathrm d}{\mathrm d t} \exp(t V) \cdot X \cdot \exp(-t V) \right|_{t=0} = [V, X]
\end{equation*}

The entries $x_{k\ell}$ of a matrix $X \in \mat{n}{n}{\CC}$ will serve as coordinates on $\mat{n}{n}{\CC} \cong \slalg_n(\CC) \oplus \CC$. We denote the fundamental vector fields of the adjoint representation of $\slalg_n(\CC)$ corresponding to $E_{ab}$ resp.\ $H_a$ by $\Theta_{ab}$ resp.\ $\Xi_a$. They are 
\begin{equation}
\Theta_{a b} := \sum_{k=1}^n \left( x_{b k} \frac{\partial}{\partial x_{a k}} - x_{k a}  \frac{\partial}{\partial x_{k b}} \right), \quad a \neq b
\end{equation}
\begin{equation}
\Xi_a := \sum_{k=1}^n \left( x_{a k} \frac{\partial}{\partial x_{a k}} - x_{a+1, k}  \frac{\partial}{\partial x_{a+1, k}} - x_{k a} \frac{\partial}{\partial x_{k a}} + x_{k, a+1} \frac{\partial}{\partial x_{k, a+1}} \right)
\end{equation}
We will frequently refer to the vector fields $\Xi_a$ as \emph{hyperbolic vector fields}.

The vector fields $\Theta_{a b}$ and their commutators $\Xi_a = [\Theta_{a, a+1}, \Theta_{a+1, a}]$ span the adoint represention of the Lie algebra $\slalg_n(\CC)$ written as vector fields and obey of course the same commutation relations as the $E_{a b}$ and $H_a$. In particular,
\begin{equation}
[\Theta_{a b}, \Theta_{c d}] = 0 \Longleftrightarrow a = c \vee b = d
\end{equation}

\begin{example}
Let $n \geq 2$.
Since $\Theta_{12}(x_{21}) = 0$, the vector field $x_{21} \Theta_{12}$ is a shear vector field. And its flow map is given by
\begin{align*}
X &\mapsto \exp( t x_{21} E_{12}) \cdot X \cdot \exp( -t x_{21} E_{12}) \\
  &= \left( \id + t x_{21} E_{12} \right) \cdot X \cdot \left(\id - t x_{21} E_{12} \right) 
\end{align*}
The semi-group property is satisfied because $x_{21}$ is conjugation invariant under the action of the one-parameter subgroup generated by $\Theta_{12}$.

Now we consider the overshear vector field $x_{11} \Theta_{12}$ with $\Theta_{12}(x_{11}) = x_{21}$ and $\Theta_{12}^2(x_{11}) = 0$.
Using the function $\varepsilon$ from Lemma \ref{lemshearflow}, the flow map is given by
\begin{align*}
X &\mapsto \exp\left( \varepsilon(t x_{21}) \cdot t x_{11} \cdot E_{12} \right) \cdot X \cdot \exp\left( - \varepsilon(t x_{21}) \cdot t x_{11} E_{12} \right) \\
 &= \exp\left( (e^{t x_{21}} - 1) \frac{x_{11}}{x_{21}} \cdot E_{12}\right) \cdot X \cdot \exp\left( - (e^{t x_{21}} - 1) \frac{x_{11}}{x_{21}} \cdot E_{12} \right)
\end{align*}
The semi-group property is less obvious, but can be verified by direct calculation or the more abstract argument in Lemma \ref{lemshearflow}.
\end{example}

\section{Fibred Density Property for the spectral ball}
\label{sectionspectral}

In this section we prove the fibred density property for the spectral ball and determine its automorphism group. The crucial technical part is Proposition \ref{propalldegrees}, and the following lemmas will be needed for the induction in the proof of this proposition.

\begin{lemma}
\label{lem-sl-overshears}
For $\Theta_{a b}, a \neq b$ and $x_{c d}$ with $a, b, c, d \in \{1, \dots n\}$ we have
\[
\Theta_{ab}(x_{c d}) = \delta_{a c} x_{b d} - \delta_{b d} x_{c a}
\]
and for $a < n$ we have
\[
\Xi_a(x_{c d}) = \left( \delta_{ac} - \delta_{a+1,c} - \delta_{ad} + \delta_{a+1, d}\right) x_{c d}
\]
\end{lemma}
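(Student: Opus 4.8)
The plan is a direct computation from the explicit coordinate expressions for $\Theta_{ab}$ and $\Xi_a$ recorded in Section \ref{sectionshear}, using only that $\frac{\partial x_{cd}}{\partial x_{k\ell}} = \delta_{ck}\delta_{d\ell}$ and that these vector fields act as derivations on $\hol(\mat{n}{n}{\CC})$.

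First I would treat $\Theta_{ab}$. Writing $\Theta_{ab} = \sum_{k} \left( x_{bk}\frac{\partial}{\partial x_{ak}} - x_{ka}\frac{\partial}{\partial x_{kb}} \right)$ and applying it to $x_{cd}$, the first summand contributes $\sum_k x_{bk}\,\delta_{ca}\delta_{dk} = \delta_{ac}\,x_{bd}$ and the second contributes $-\sum_k x_{ka}\,\delta_{ck}\delta_{db} = -\delta_{bd}\,x_{ca}$, giving exactly $\Theta_{ab}(x_{cd}) = \delta_{ac}\,x_{bd} - \delta_{bd}\,x_{ca}$.

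For $\Xi_a$ I would proceed the same way: applying the four-term expression for $\Xi_a$ to $x_{cd}$ and collapsing the Kronecker deltas, the terms yield respectively $\delta_{ac}\,x_{ad}$, $-\delta_{a+1,c}\,x_{a+1,d}$, $-\delta_{ad}\,x_{ca}$, and $\delta_{a+1,d}\,x_{c,a+1}$; rewriting $\delta_{ac}\,x_{ad} = \delta_{ac}\,x_{cd}$ and similarly for the other three, these combine to $(\delta_{ac} - \delta_{a+1,c} - \delta_{ad} + \delta_{a+1,d})\,x_{cd}$. Alternatively, one can avoid this second computation altogether by recalling that $\Theta_{ab}$ and $\Xi_a$ are the fundamental vector fields of the adjoint (conjugation) action, so that $\Theta(x_{cd})(X) = x_{cd}([V,X]) = [V,X]_{cd}$ with $V = E_{ab}$ or $V = H_a$; the formula for $\Xi_a$ then drops out of $H_a$ being diagonal with $i$-th entry $\delta_{ia} - \delta_{i,a+1}$, since $[H_a,X]_{cd} = \bigl((H_a)_{cc} - (H_a)_{dd}\bigr)\,x_{cd}$.

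There is no real obstacle; the only point requiring care is the index bookkeeping — in particular the simplification of products such as $\delta_{ac}\,x_{ad}$ to $\delta_{ac}\,x_{cd}$ so that every surviving term carries the common factor $x_{cd}$ — together with the hypothesis $a < n$ in the second formula, which is precisely what makes $\Xi_a$ (equivalently $H_a$) defined.
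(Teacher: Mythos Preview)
Your argument is correct and is exactly the straightforward calculation the paper alludes to; the paper's own proof says only ``The proof is a straightforward calculation'' without giving details. Your write-up actually supplies the bookkeeping the paper omits, and both the direct computation and the alternative via $[V,X]_{cd}$ are valid.
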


\begin{proof}
The proof is a straightforward calculation. \qedhere 
\end{proof}

\begin{cor}
\label{cor-sl-overshears}
\begin{align*}
\Theta_{a b}(x_{cd})   &= 0 &\Longleftrightarrow a \neq c \wedge b \neq d \\
\Theta_{a b}^2(x_{cd}) &= 0 &\Longleftrightarrow a \neq c \vee b \neq d \\
\Theta_{a b}(x_{ab})   &= x_{bb} - x_{aa} \\
\Theta_{a b}^2(x_{ab}) &= -2 x_{ba}, \quad \Theta_{a b}^3(x_{ab}) = 0 \\
\Xi_{a}(x_{cd}) &= 0 &\Longleftrightarrow c = d \vee \{c, d\} \cap \{a, a+1\} = \emptyset
\end{align*}
\end{cor}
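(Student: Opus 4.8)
The plan is to derive every line of the corollary by directly iterating the two identities of Lemma~\ref{lem-sl-overshears} and then reading off when a Kronecker-delta coefficient vanishes. No new idea is needed; it is pure bookkeeping, so I will indicate the computations and flag the one or two spots that need care.

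I would first dispatch the $\Theta_{ab}$ statements (recall $a\neq b$): Lemma~\ref{lem-sl-overshears} gives $\Theta_{ab}(x_{cd}) = \delta_{ac}x_{bd} - \delta_{bd}x_{ca}$. If exactly one of $\delta_{ac},\delta_{bd}$ equals $1$ this is a single nonzero coordinate function; if both equal $1$ then $c=a$, $d=b$ and it equals $x_{bb}-x_{aa}\neq 0$ (here $a\neq b$ is used); if both vanish it is $0$. Hence $\Theta_{ab}(x_{cd})=0\iff a\neq c\wedge b\neq d$, and in particular $\Theta_{ab}(x_{ab}) = x_{bb}-x_{aa}$. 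Applying $\Theta_{ab}$ once more and using $a\neq b$ (so $\Theta_{ab}(x_{bd}) = -\delta_{bd}x_{ba}$ and $\Theta_{ab}(x_{ca}) = \delta_{ac}x_{ba}$) yields $\Theta_{ab}^2(x_{cd}) = -2\delta_{ac}\delta_{bd}x_{ba}$, which vanishes iff $a\neq c\vee b\neq d$ and equals $-2x_{ba}$ when $c=a$, $d=b$. A third application gives $\Theta_{ab}(x_{ba}) = \delta_{ab}x_{ba}-\delta_{ba}x_{bb}=0$, so $\Theta_{ab}^3(x_{cd})\equiv 0$, in particular $\Theta_{ab}^3(x_{ab})=0$.

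Then for $\Xi_a$, Lemma~\ref{lem-sl-overshears} gives $\Xi_a(x_{cd}) = p\,x_{cd}$ with $p = \delta_{ac}-\delta_{a+1,c}-\delta_{ad}+\delta_{a+1,d}$. If $c=d$ the first and third terms cancel and the second and fourth cancel, so $p=0$; if $\{c,d\}\cap\{a,a+1\}=\emptyset$ all four indicators vanish, so $p=0$. For the converse, assume $c\neq d$ and $\{c,d\}$ meets $\{a,a+1\}$: then $c\neq d$ forbids $\delta_{ac}=\delta_{ad}=1$ and $\delta_{a+1,c}=\delta_{a+1,d}=1$, while $a\neq a+1$ forbids $\delta_{ac}=\delta_{a+1,c}=1$ and $\delta_{ad}=\delta_{a+1,d}=1$; so at most one of the two ``$+$'' indicators and at most one of the two ``$-$'' indicators is nonzero and they cannot pair off, giving $p\in\{\pm1,\pm2\}$ and hence $\Xi_a(x_{cd})\neq 0$. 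There is essentially no obstacle here; the only points meriting attention are the use of $a\neq b$ in the $\Theta_{ab}$ computations (which both rules out accidental cancellation of monomials and forces $\Theta_{ab}^3$ to vanish) and the short case check in the converse direction for $\Xi_a$.
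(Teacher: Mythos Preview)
Your approach is correct and is exactly how the corollary is meant to be derived---the paper gives no proof, treating it as immediate from Lemma~\ref{lem-sl-overshears}. There is one slip in your converse argument for $\Xi_a$: from the four forbidden pairs you list you cannot conclude ``at most one of the two `$+$' indicators is nonzero''---indeed, for $c=a$, $d=a+1$ both $\delta_{ac}$ and $\delta_{a+1,d}$ equal $1$ and $p=2$. What your four constraints actually show is that every forbidden pair consists of one ``$+$'' and one ``$-$'' indicator, so a ``$+$'' and a ``$-$'' can never be simultaneously nonzero; hence $p$ equals either the number of nonzero ``$+$'' indicators or minus the number of nonzero ``$-$'' indicators, and since at least one indicator is nonzero you get $p\in\{\pm1,\pm2\}$ as claimed. (Also, in your computation of $\Theta_{ab}(x_{ba})$ the second term should read $-\delta_{ba}x_{ba}$ rather than $-\delta_{ba}x_{bb}$; this does not affect the conclusion since $\delta_{ba}=0$.)
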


We illustrate these results by summarizing them for $\slalg_3(\CC)$ in Tables \ref{tab-sl3fields} and \ref{tab-sl3action}.
\begin{table}[]
\begin{align*}
\Theta_{12} &= + x_{21} \partial_{11} - x_{11} \partial_{12} + x_{22} \partial_{12} + x_{23} \partial_{13} - x_{21} \partial_{22} - x_{31} \partial_{32} \\
\Theta_{13} &= + x_{31} \partial_{11} + x_{32} \partial_{12} - x_{11} \partial_{13} + x_{33} \partial_{13} - x_{21} \partial_{23} - x_{31} \partial_{33} \\
\Theta_{21} &= - x_{12} \partial_{11} + x_{11} \partial_{21} - x_{22} \partial_{21} + x_{12} \partial_{22} + x_{13} \partial_{23} - x_{32} \partial_{31} \\
\Theta_{23} &= - x_{12} \partial_{13} + x_{31} \partial_{21} + x_{32} \partial_{22} - x_{22} \partial_{23} + x_{33} \partial_{23} - x_{32} \partial_{33} \\
\Theta_{31} &= - x_{13} \partial_{11} - x_{23} \partial_{21} + x_{11} \partial_{31} - x_{33} \partial_{31} + x_{12} \partial_{32} + x_{13} \partial_{33} \\
\Theta_{32} &= - x_{13} \partial_{12} - x_{23} \partial_{22} + x_{21} \partial_{31} + x_{22} \partial_{32} - x_{33} \partial_{32} + x_{23} \partial_{33} \\
\Xi_{1} &= +2 x_{12} \partial_{12} + x_{13} \partial_{13} -2 x_{21} \partial_{21} - x_{23} \partial_{23} - x_{31} \partial_{31} + x_{32} \partial_{32} \\
\Xi_{2} &= - x_{12} \partial_{12} + x_{13} \partial_{13} + x_{21} \partial_{21} +2 x_{23} \partial_{23} - x_{31} \partial_{31} -2 x_{32} \partial_{32} 
\end{align*}
\caption{Vector fields for the adjoint representation of $\slalg_3(\CC)$}
\label{tab-sl3fields}
\end{table}

\begin{table}[]
\begin{adjustbox}{max width=\textwidth}
\begin{tabular}{c|cccccccc}
 & $\Theta_{12}$ & $\Theta_{13}$ & $\Theta_{21}$ & $\Theta_{23}$ & $\Theta_{31}$ & $\Theta_{32}$ & $\Xi_{1}$ & $\Xi_{2}$\\
\hline$ x_{11} $ & $ x_{21} $ & $ x_{31} $ & $ - x_{12} $ & $0$ & $ - x_{13} $ & $0$ & $0$ & $0$\\
$ x_{12} $ & $ - x_{11}  + x_{22} $ & $ x_{32} $ & $0$ & $0$ & $0$ & $ - x_{13} $ & $  2x_{12} $ & $ - x_{12} $\\
$ x_{13} $ & $ x_{23} $ & $ - x_{11}  + x_{33} $ & $0$ & $ - x_{12} $ & $0$ & $0$ & $ x_{13} $ & $ x_{13} $\\
$ x_{21} $ & $0$ & $0$ & $ x_{11}  - x_{22} $ & $ x_{31} $ & $ - x_{23} $ & $0$ & $ -2x_{21} $ & $ x_{21} $\\
$ x_{22} $ & $ - x_{21} $ & $0$ & $ x_{12} $ & $ x_{32} $ & $0$ & $ - x_{23} $ & $0$ & $0$\\
$ x_{23} $ & $0$ & $ - x_{21} $ & $ x_{13} $ & $ - x_{22}  + x_{33} $ & $0$ & $0$ & $ - x_{23} $ & $  2x_{23} $\\
$ x_{31} $ & $0$ & $0$ & $ - x_{32} $ & $0$ & $ x_{11}  - x_{33} $ & $ x_{21} $ & $ - x_{31} $ & $ - x_{31} $\\
$ x_{32} $ & $ - x_{31} $ & $0$ & $0$ & $0$ & $ x_{12} $ & $ x_{22}  - x_{33} $ & $ x_{32} $ & $ -2x_{32} $\\
$ x_{33} $ & $0$ & $ - x_{31} $ & $0$ & $ - x_{32} $ & $ x_{13} $ & $ x_{23} $ & $0$ & $0$ \\
\end{tabular}
\end{adjustbox}
\vspace{6pt}
\caption{Action of vector fields on linear monomials for $\slalg_3(\CC)$.}
\label{tab-sl3action}
\end{table}
%
%

\begin{lemma}
\label{lem-crossimage}
Let $n \geq 3$. The
$\mathrm{span}_\CC \left\{ \Theta_{ab}(x_{c d}) \,:\, \Theta_{12}(x_{c d}) = 0 \right\}$ contains all linear monomials except $x_{12}$.
\end{lemma}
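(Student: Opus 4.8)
The plan is a direct, explicit case analysis built on the formula $\Theta_{ab}(x_{cd}) = \delta_{ac} x_{bd} - \delta_{bd} x_{ca}$ of Lemma~\ref{lem-sl-overshears}. The first step is to unwind the hypothesis: by Corollary~\ref{cor-sl-overshears}, $\Theta_{12}(x_{cd}) = 0$ is equivalent to $c \neq 1$ and $d \neq 2$, so the span under consideration is generated by the vectors $\delta_{ac} x_{bd} - \delta_{bd} x_{ca}$ with $a \neq b$ and with $(c,d)$ ranging over the index pairs with $c \geq 2$, $d \neq 2$. It then suffices to hit each linear monomial $x_{pq} \neq x_{12}$, up to sign (and, in the diagonal case, up to the trace), by a single well-chosen application $\Theta_{ab}(x_{cd})$.

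For off-diagonal targets $x_{pq}$ with $p \neq q$ I would split according to whether $q = 2$. If $q \neq 2$, choose any $c \in \{2, \dots, n\} \setminus \{p\}$ --- nonempty precisely because $n \geq 3$ --- and observe that $x_{cq}$ is admissible and $\Theta_{cp}(x_{cq}) = x_{pq}$, the term $\delta_{pq} x_{cc}$ being absent. If $q = 2$, then $x_{pq} \neq x_{12}$ forces $p \geq 3$, hence $x_{p1}$ is admissible and $\Theta_{21}(x_{p1}) = -x_{p2}$. This yields every off-diagonal monomial except $x_{12}$; moreover, inspecting the formula shows that no admissible $(a,b,c,d)$ produces an $x_{12}$-contribution at all, so $x_{12}$ is genuinely excluded.

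For the diagonal monomials I would use the instance $a = c$, $b = d$ of the formula, which gives $\Theta_{cp}(x_{cp}) = x_{pp} - x_{cc}$ whenever $c \neq 1$, $p \neq 2$, $c \neq p$. Letting $(p,c)$ range over these pairs produces the differences $x_{pp} - x_{cc}$ for $p \in \{1,3,\dots,n\}$ and $c \in \{2,\dots,n\} \setminus \{p\}$, and a one-line check --- these include $x_{11}-x_{22}$, all $x_{kk}-x_{k+1,k+1}$ with $k \geq 3$, and $x_{33}-x_{22}$ --- shows they span the whole trace-free diagonal, which on $\mat{n}{n}{\CC} \cong \slalg_n(\CC) \oplus \CC$ accounts for the diagonal monomials modulo the central direction. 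Assembling the two cases proves the claim.

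The argument is essentially bookkeeping; the only genuine subtleties are keeping track of which auxiliary indices remain available (this is exactly where the hypothesis $n \geq 3$ is used, and the statement indeed fails for $n = 2$, where $x_{21}$ cannot be reached) and recognizing that the diagonal monomials are recovered only through their differences, so that $x_{12}$ is the sole obstruction.
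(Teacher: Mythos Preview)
Your proof is correct and follows essentially the same route as the paper: both use Corollary~\ref{cor-sl-overshears} to identify the admissible $x_{cd}$ as those with $c\neq 1$, $d\neq 2$, then reach off-diagonal targets via $\Theta_{ab}(x_{ad})=x_{bd}$ (your case $q\neq 2$) and $\Theta_{ab}(x_{cb})=-x_{ca}$ (your case $q=2$), and handle the diagonal via $\Theta_{ab}(x_{ab})=x_{bb}-x_{aa}$ together with the trace relation on $\slalg_n(\CC)$. Your additional observation that no admissible choice ever produces an $x_{12}$-term, and that the lemma fails for $n=2$, is correct and slightly sharpens the paper's statement.
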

\begin{proof} \hfill
\begin{enumerate}
\item
We consider first the monomials of the form $x_{aa}$:
Let $x_{a b} \in \ker \Theta_{12}$, i.e.\ $a \neq 1$ and $b \neq 2$.
\[
\Theta_{a b}(x_{a b}) = x_{bb} - x_{aa}
\]
Thus, we obtain all such differences except $x_{22} - x_{11}$, but instead the scalar multiple $x_{11} - x_{22}$. For the coordinate functions on $\slalg_n(\CC)$, i.e.\ elements of $\left(\slalg_n(\CC)\right)^\ast$, we further have the trace condition $x_{11} + x_{22} + \dots + x_{nn} = 0$. Therefore the span contains all functions $x_{aa}$ on  $\slalg_n(\CC)$.

\item For $a \neq 1, d \neq 2$ and $b \neq d$ we obtain
\[
\Theta_{a b}(x_{a d}) = x_{b d}
\]
hence all $x_{k \ell}$ with $\ell \neq 2, k \neq \ell$ are in the span. Here, we need $n \geq 3$.
For $c \neq 1, b \neq 2$ and $c \neq a$ we obtain
\[
\Theta_{a b}(x_{c b}) = - x_{c a}
\]
hence $x_{k \ell}$ with $k \neq 1, k \neq \ell$ are in the span. We again need $n \geq 3$.
\qedhere
\end{enumerate}
\end{proof}

We are now prepared to prove our main proposition. The proof is by induction over the degree. To understand the proof and its structure it might be helpful to look first at the induction step which starts after degree two.

\begin{definition}
\begin{equation*}
\mathcal{L}_n := \mathrm{span}_{\CC} \{ f \cdot \Theta \,:\, f \text{ polynomial on } \slalg_n(\CC), \; \Theta \in \langle \Theta_{k \ell}, \Xi_{m} \rangle \} 
\end{equation*}
By $\mathcal{A}_n$ we denote the Lie algebra generated by all vector fields which are $\slgrp_n(\CC)$-overshears with monomial coefficients of degree at most $2$.
\end{definition}

\begin{proposition}
\label{propalldegrees}
Let $n \geq 2$, then $\mathcal{L}_n = \mathcal{A}_n$.
\end{proposition}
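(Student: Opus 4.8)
The plan is to prove the two inclusions separately; the inclusion $\mathcal{A}_n \subseteq \mathcal{L}_n$ is immediate. It is enough to check that $\mathcal{L}_n$ is a Lie algebra, since it obviously contains all the generating overshears $f\cdot\Theta$ with $f$ a monomial of degree at most $2$. If $\omega_1 = f\,\Theta$ and $\omega_2 = g\,\Xi$ with $\Theta,\Xi \in \langle \Theta_{k\ell},\Xi_m\rangle$, then
\[
[\omega_1,\omega_2] \;=\; f\cdot\Theta(g)\cdot\Xi \;-\; g\cdot\Xi(f)\cdot\Theta \;+\; fg\cdot[\Theta,\Xi],
\]
and since $\Theta(g)$, $\Xi(f)$ are again polynomials and $[\Theta,\Xi]\in\langle\Theta_{k\ell},\Xi_m\rangle$ (this space being closed under brackets, as it realizes the adjoint representation of $\slalg_n(\CC)$), the bracket lies in $\mathcal{L}_n$; by bilinearity $\mathcal{L}_n$ is closed under the Lie bracket, whence $\mathcal{A}_n\subseteq\mathcal{L}_n$.

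For the reverse inclusion I would argue by induction on the degree $d$ of the polynomial coefficient, writing $\mathcal{L}_n^{\le d}$ for the span of the $f\cdot\Theta$ with $\deg f\le d$; expanding into monomials and basis fields, it suffices to prove $f\cdot\Theta_{ef}\in\mathcal{A}_n$ and $f\cdot\Xi_a\in\mathcal{A}_n$ for every monomial $f$ of degree $d$. The base case $d\le 2$ is the bookkeeping part: Corollary \ref{cor-sl-overshears} pins down exactly which $f\cdot\Theta$ with $f$ linear or quadratic are already shears or overshears, hence generators; the remaining low-degree elements — notably the ``diagonal'' ones such as $x_{ab}\cdot\Theta_{ab}$ and the $x_{cd}\cdot\Xi_a$ with $\Xi_a(x_{cd})\neq 0$ — are produced from these via the bracket identity above, with Lemma \ref{lem-crossimage} supplying, through the images $\Theta_{ab}(x_{cd})$ with $x_{cd}\in\ker\Theta_{12}$, all the linear coefficients needed (the single exception $x_{12}$ being recovered by repeating the lemma with a different root vector, or by the transposition symmetry).

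The engine of the induction step is again the bracket formula, now with a hyperbolic field in the first slot. Assume $\mathcal{L}_n^{\le d}\subseteq\mathcal{A}_n$ with $d\ge 2$ and let $f$ be a monomial of degree $d+1$; write $f=x_{cd}\cdot g$ with $\deg g=d$. Then $x_{cd}\cdot\Xi_a\in\mathcal{L}_n^{\le 1}\subseteq\mathcal{A}_n$ and $g\cdot\Theta_{ef}\in\mathcal{L}_n^{\le d}\subseteq\mathcal{A}_n$, so their bracket lies in $\mathcal{A}_n$. Since $\Xi_a$ acts diagonally on monomials, $\Xi_a(g)=\mu\,g$; since $[\Xi_a,\Theta_{ef}]=c\,\Theta_{ef}$ with $c$ the corresponding root value; and since $\Theta_{ef}(x_{cd})=\delta_{ec}x_{fd}-\delta_{fd}x_{ce}$ by Lemma \ref{lem-sl-overshears}, the formula collapses to
\[
\big[\,x_{cd}\,\Xi_a,\; g\,\Theta_{ef}\,\big] \;=\; (\mu+c)\, f\cdot\Theta_{ef} \;-\; g\cdot\big(\delta_{ec}x_{fd}-\delta_{fd}x_{ce}\big)\cdot\Xi_a .
\]
Hence, provided $\mu+c\neq 0$, the element $f\cdot\Theta_{ef}$ lies in $\mathcal{A}_n$ as soon as the correction term does, and that term either vanishes (when $e\neq c$ and $f\neq d$, which one tries to arrange by choosing the factor $x_{cd}$ of $f$) or is again a degree-$(d+1)$ coefficient times a hyperbolic field, but with a coefficient ``closer to the kernel'' of the relevant field. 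The remaining elements $h\cdot\Xi_a$ with $\deg h=d+1$ are treated symmetrically: factoring $h=x_{cd}\cdot k$ and bracketing $x_{cd}\,\Xi_b$ against $k\,\Xi_a$ for a second hyperbolic index $b$, or using $\Xi_a=[\Theta_{a,a+1},\Theta_{a+1,a}]$, reduces them to $\Theta$-type elements of degree $d+1$ plus strictly lower-degree terms.

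The main obstacle is to organize all this into a well-founded induction. Concretely one must (a) guarantee that at each application of the identity the scalar $\mu+c$ — equivalently the $\Xi_a$-weight of $g\cdot x_{ef}$ — can be made nonzero for a suitable choice of hyperbolic index $a$, which fails exactly for coefficients assembled out of $\Xi$-invariant combinations and must then be circumvented by bracketing against a non-hyperbolic $\Theta_{pq}$ chosen so that $[\Theta_{pq},\Theta_{ef}]$ is a different root vector (thereby shifting the offending index); and (b) assign to each pair (coefficient, field) a complexity — its degree together with a secondary count of how many factors of the coefficient ``clash'' with the indices $e,f$ — that strictly decreases under the correction terms, so that the intertwined recursion between $\Theta_{ef}$-type and $\Xi_a$-type elements terminates. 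The transposition symmetry $x_{k\ell}\leftrightarrow x_{\ell k}$, $\Theta_{ab}\leftrightarrow-\Theta_{ba}$ cuts the number of configurations roughly in half. Making this finite case analysis close up cleanly — rather than the individual bracket computations, which are routine — is the actual content of the proposition.
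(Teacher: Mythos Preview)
Your overall plan---induction on the degree of the coefficient, with the bracket identity as the engine---matches the paper's, and your treatment of the easy inclusion $\mathcal{A}_n\subseteq\mathcal{L}_n$ is fine. The problem is that your induction step, as written, does not close: it is circular, and you yourself flag this (``Making this finite case analysis close up cleanly \dots\ is the actual content of the proposition'') without actually supplying the missing device.

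Concretely: your scheme for $f\cdot\Theta_{ef}$ produces a correction of the form $g\cdot(\dots)\cdot\Xi_a$ of the \emph{same} degree $d+1$, and your scheme for $h\cdot\Xi_a$ (via $\Xi_a=[\Theta_{a,a+1},\Theta_{a+1,a}]$) reduces to $h\cdot\Theta_{a,a+1}$ and $\Theta_{a+1,a}(h)\cdot\Theta_{a,a+1}$, again of degree $d+1$ (not strictly lower, contrary to what you write). Run this on the key case $f=x_{12}^{d+1}$, $\Theta_{ef}=\Theta_{12}$: the only factorisation is $x_{12}\cdot x_{12}^d$, your correction is $x_{12}^d(x_{22}-x_{11})\Xi_1$; reducing this via $\Xi_1=[\Theta_{12},\Theta_{21}]$ requires $x_{12}^d(x_{22}-x_{11})\Theta_{12}$, whose correction term (after factoring off $x_{11}$ or $x_{22}$) is $x_{12}^d x_{21}\Xi_1$; reducing that requires $\Theta_{21}(x_{12}^d x_{21})\Theta_{12}=x_{12}^d(x_{11}-x_{22})\Theta_{12}$---and you are back where you started. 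For $n=2$ there is no ``different root vector'' to shift to, so your suggested escape hatch is unavailable. The undefined ``secondary count'' does not decrease along this loop.

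The paper breaks this circularity with two tricks you are missing. First, instead of a single bracket, it uses the \emph{difference}
\[
[af\cdot\Theta,\;g\cdot\Lambda]\;-\;[f\cdot\Theta,\;ag\cdot\Lambda]\;=\;-fg\cdot\bigl(\Theta(a)\Lambda+\Lambda(a)\Theta\bigr),
\]
which for $a\in\ker\Theta_{12}$ kills the $[\Theta,\Lambda]$-term and the $\Lambda$-term and, via Lemma~\ref{lem-crossimage}, produces every degree-$(d+1)$ monomial in front of $\Theta_{12}$ \emph{except} $x_{12}^{d+1}$---with no $\Xi$-corrections of degree $d+1$ at all. Second, for the lone exception $x_{12}^{d+1}\Theta_{12}$ it takes a specific linear combination of $[x_{12}\Xi_1,\,x_{12}^d\Theta_{12}]$ and $[x_{12}^d\Xi_1,\,x_{12}\Theta_{12}]$ (note $x_{12}^d\Xi_1$ has degree $d$, so is already in $\mathcal{A}_n$) that cancels the $\Xi_1$-correction and isolates $x_{12}^{d+1}\Theta_{12}$. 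Only \emph{after} all $\Theta_{k\ell}$-type elements of degree $d+1$ are obtained does one pass to $\Xi_a$-type via $[\Theta_{21},f\Theta_{12}]=f\Xi_1-\Theta_{21}(f)\Theta_{12}$, which is then free of circularity. Your single-bracket-with-hyperbolic approach does not separate these phases and therefore cannot terminate without an additional idea of this kind.
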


\begin{proof}
We only need to show the inclusion $\mathcal{L}_n \subseteq \mathcal{A}_n$.
The proof is by induction on the degree $d$ of the polynomial coefficients. The induction hypothesis for $d=0$ is true by  assumption.

\bigskip
\textbf{We treat the case $d = 1$ separately:}
The only missing vector fields are $x_{k \ell} \Theta_{k \ell}$ and the linear monomials in front of the hyperbolic vector fields. For $(k,\ell) = (1, 2)$ a short calculation shows:
\[
[x_{22} \Theta_{12}, \Theta_{21}] = x_{22} [\Theta_{12}, \Theta_{21}] - (\Theta_{21}x_{22}) \Theta_{12} = x_{22} \Xi_1 + x_{12} \Theta_{12}
\]
Because $x_{22} \Xi_1$ is a shear vector field, we can conclude that $x_{12} \Theta_{12}$ is in $\mathcal{A}_n$. By symmetry (index permutation in $\Theta_{k\ell}$), this is true for all $x_{k \ell} \Theta_{k \ell}$. 
For the hyperbolic vector fields, see the general case, step \ref{inductionhyper}.

\bigskip
\pagebreak
\textbf{We also need to treat the case $d = 2$ separately:}

\begin{enumerate}
\item
Let $a$ and $f$ be monomials of degree one. We first remark that if $\Theta(a) = 0$ and $\Theta^2(f) = 0$, then $\Theta^2(a f) = 0$, i.e.\ under these assumptions $a f \cdot \Theta$ is a $\Theta$-overshear vector field and hence in $\mathcal{A}_n$. Consider the following difference of Lie brackets:
\begin{align*}
[a f \Theta, \Lambda] - [f \Theta, a \Lambda]
 \quad = \quad & a f [\Theta, \Lambda] - \Lambda(a f) \Theta \\
 & - a f [\Theta, \Lambda] - f \Theta(a) \Lambda + a \Lambda(f) \Theta \\
 = \quad & -f \Lambda(a) \cdot \Theta
\end{align*}
For the quadratic terms in front of $\Theta = \Theta_{k \ell}$ we can restrict ourselves without loss of generality to the case $\Theta = \Theta_{12}$. By Lemma \ref{lem-crossimage} we obtain all terms of the form $f \Lambda(a) \cdot \Theta = x_{ab} x_{cd} \Theta_{12}$ with $x_{ab} \neq x_{12} \neq x_{cd}$.

\item
We focus on the most difficult term, i.e.\ $x_{k \ell}^2 \Theta_{k \ell}$. It is sufficient to consider $\Theta_{12}$:

We make a detour to a hyperbolic vector field and aim to obtain $x_{12}^2 \Xi_1$. Note that $x_{12}^2 \cdot \Theta_{21}$ is a shear vector field.
\begin{align*}
[ x_{12}^2 \cdot \Theta_{21}, \Theta_{12} ]
&= 2 x_{12} \Theta_{12}(x_{12}) \Theta_{21} - x_{12}^2 \Xi_1
\end{align*}
It remains to check that $2 x_{12} \Theta_{12}(x_{12}) \Theta_{21}$ is an overshear vector field:
$\Theta_{21}(2 x_{12} \Theta_{12}(x_{12})) = 2 x_{12} \Theta_{21}(-x_{11}+x_{22}) = 4 x_{12}^2$ and $\Theta_{21}(x_{12}^2)=0$.

Now we calculate the following Lie brackets of already obtained terms:
\begin{align*}
[ x_{12} \cdot \Xi_1, x_{12} \cdot \Theta_{12} ]
&= x_{12} \Xi_1(x_{12}) \Theta_{12} - x_{12} \Theta_{12}(x_{12}) \Xi_1 + x_{12}^2 [\Xi_1, \Theta_{12}] \\
&= 4 x_{12}^{2} \Theta_{12} - x_{12} \Theta_{12}(x_{12}) \Xi_1 \\
[ x_{12}^2 \cdot \Xi_1, \Theta_{12} ]
&= x_{12}^2 [\Xi_1, \Theta_{12}] - 2 x_{12} \Theta_{12}(x_{12}) \Xi_1 \\
&= 2 x_{12}^2 \Theta_{12} - 2 x_{12} \Theta_{12}(x_{12}) \Xi_1
\end{align*}
Now, a linear combination of these Lie brackets yields
\begin{align*}
2 [ x_{12} \cdot \Xi_1, x_{12} \cdot \Theta_{12} ] - [ x_{12}^2 \cdot \Xi_1, \Theta_{12} ] = 6 x_{12}^2 \Theta_{12}
\end{align*}

\item
After having obtained the term $x_{12}^2$ in front of $\Theta_{12}$ we get the other terms by letting $\slalg_n(\CC)$ act on it and subtracting already obtained terms:

\begin{align*}
[x_{12}^2 \Theta_{12}, \Lambda] - [x_{12} \Theta_{12}, x_{12} \Lambda]
 \quad = \quad & x_{12}^2 [\Theta_{12}, \Lambda] - \Lambda(x_{12}^2) \Theta_{12} \\
 & - x_{12} \Theta_{12}(x_{12})\Lambda + x_{12} \Lambda(x_{12}) \Theta_{12} \\
 & - x_{12} x_{12} [\Theta_{12}, \Lambda] \\
\quad = \quad & -x_{12} ( \Lambda(x_{12}) \cdot \Theta_{12} + \Theta_{12}(x_{12}) \cdot \Lambda )
\end{align*}
By Lemma \ref{lem-crossimage} we obtain all terms of the form $x_{12} x_{cd} \Theta_{12}$ if we manage to subtract the terms $x_{12} \Theta_{12} (x_{12}) \cdot \Lambda = - x_{12} x_{11} \Lambda + x_{12} x_{22} \Lambda$.
For this we only need to see that for a $\Lambda = \Theta_{k \ell}, k \neq \ell,$ it is always true that both $\Lambda^2(x_{12}) = 0$ and $\Lambda^2(x_{11}) = 0$ as well as $\Lambda^2(x_{22}) = 0$ which follows from Corollary \ref{cor-sl-overshears}.

\item
For other hyperbolic vector fields, see again the general case, step \ref{inductionhyper}.
\end{enumerate}

\bigskip
\textbf{Induction step} $d \mapsto d + 1, \; d \geq 2$: \nopagebreak
\begin{enumerate}
\item
Let $f$ and $g$ be monomials of degree $d - 1$ (or less) and let $a$ be a monomial of degree one.
\begin{align*}
[ a f \cdot \Theta, \, g \cdot \Lambda ] - [ f \cdot \Theta, \, a g \cdot \Lambda ]
 &= (a f \Theta(g) \Lambda - g \Lambda(a f) \Theta + a f g [ \Theta, \Lambda ]) \\
  &\quad -(f \Theta(a g) \Lambda - a g \Lambda(f) \Theta + a f g [ \Theta, \Lambda ]) \\
 &=  a f \Theta(g) \Lambda - g a \Lambda(f) \Theta - g f \Lambda(a) \Theta \\
  &\quad -a f \Theta(g) \Lambda - f g \Theta(a) \Lambda + a g \Lambda(f) \Theta \\
 &= - f g \cdot \left( \Theta(a) \Lambda + \Lambda(a) \Theta  \right)
\end{align*}
To obtain the coefficients in front of $\Theta_{k \ell}$ it is by symmetry sufficient to consider $\Theta = \Theta_{12}$. Choose $a \in \ker \Theta_{12}$. For $\Lambda$ we can choose any other vector field in $\slalg_n$. From Lemma \ref{lem-crossimage} we know that all linear monomials except $x_{12}$ are obtained as $\Lambda(a)$ in case of dimension $n \geq 3$.
In dimension $n=2$ we only have $\Theta_{12}(x_{21}) = 0$, but -- using also the hyperbolic vector field -- still obtain $\Theta_{21}(x_{21}) = x_{11} - x_{22}$ and $\Xi_1(x_{21}) = 2 x_{21}$; note that $x_{11} + x_{22} = 0$.
We therefore obtain all monomial coefficients $f g \Lambda(a)$ of degree $d+1$ (actually, up to $2d-1$)
 in front of $\Theta_{12}$ except $x_{12}^{d+1}\Theta_{12}$.

\item \label{inductionverynonshear}
To obtain $x_{12}^{d+1} \Theta_{12}$ we calculate:
\begin{align*}
[ x_{12} \cdot \Xi_1, x_{12}^{d} \cdot \Theta_{12} ]
&= x_{12} \Xi_1(x_{12}^{d}) \Theta_{12} - x_{12}^d \Theta_{12}(x_{12}) \Xi_1 + x_{12}^{d+1} [\Xi_1, \Theta_{12}] \\
&= d \cdot x_{12}^{d} \Xi_1(x_{12}) \Theta_{12} - x_{12}^d \Theta_{12}(x_{12}) \Xi_1 + x_{12}^{d+1} [\Xi_1, \Theta_{12}] \\
&= d \cdot x_{12}^{d} \cdot 2x_{12} \Theta_{12} - x_{12}^d (-x_{11} + x_{22}) \Xi_1 + x_{12}^{d+1} 2 \Theta_{12} \\
&= (2 d + 2) x_{12}^{d+1} \Theta_{12} - x_{12}^d (-x_{11} + x_{22}) \Xi_1
\end{align*}
and
\begin{align*}
[ x_{12}^d \cdot \Xi_1, x_{12} \cdot \Theta_{12} ]
&= x_{12}^d \Xi_1(x_{12}) \Theta_{12} - x_{12} \Theta_{12}(x_{12}^d) \Xi_1 + x_{12}^{d+1} [\Xi_1, \Theta_{12}] \\
&= x_{12}^{d} \Xi_1(x_{12}) \Theta_{12} - d \cdot x_{12}^d \Theta_{12}(x_{12}) \Xi_1 + x_{12}^{d+1} [\Xi_1, \Theta_{12}] \\
&= x_{12}^{d} \cdot 2x_{12} \Theta_{12} - d \cdot x_{12}^d (-x_{11} + x_{22}) \Xi_1 + x_{12}^{d+1} 2 \Theta_{12} \\
&= 4 x_{12}^{d+1} \Theta_{12} - d \cdot x_{12}^d (-x_{11} + x_{22}) \Xi_1
\end{align*}
A linear combination of these two Lie brackets yields
\begin{align*}
d \cdot [ x_{12} \cdot \Xi_1, x_{12}^{d} \cdot \Theta_{12} ] - [ x_{12}^d \cdot \Xi_1, x_{12} \cdot \Theta_{12} ] 
&=2 (d^2 + d - 2) x_{12}^{d+1} \Theta_{12}
\end{align*}
and we have found all monomial coefficients of degree $d+1$ in front of the $\Theta_{k \ell}$.

\item \label{inductionhyper}
Now we turn to the hyperbolic vector fields. Again by symmetry it is sufficient to consider $\Xi_1 = [\Theta_{12}, \Theta_{21}]$.
\begin{align*}
[\Theta_{21}, f \cdot \Theta_{12}] = f \cdot \Xi_1 - \Theta_{21}(f) \cdot  \Theta_{12}
\end{align*}
Hence, we obtain all polynomials $f$ of degree $d + 1$ in front of $\Xi_1$ which are such that both $f \cdot \Theta_{12}$ and $\Theta_{21}(f) \cdot  \Theta_{12}$ are already known to be in $\mathcal{A}_n$. Since $\Theta_{21}(f)$ does not increase the degree of $f$, we are done. \qedhere
\end{enumerate}
\end{proof}

\begin{theorem}
\label{fibredforspectral}
The fibration $\pi \colon \mat{n}{n}{\CC} \to \CC^n$ has the fibred density property as has any restriction of $\pi$ to a Runge domain $\Omega \subseteq \mat{n}{n}{\CC}$ with $\pi^{-1}(\pi(\Omega)) = \Omega$.
\end{theorem}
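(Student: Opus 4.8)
The plan is to reduce the statement to Proposition \ref{propalldegrees}, with two further inputs: a classical invariant-theoretic description of the vector fields tangent to the fibres of the adjoint quotient, and the Oka--Cartan approximation machinery. First I would decompose $\mat{n}{n}{\CC}=\slalg_n(\CC)\oplus\CC\cdot\id$, writing $A=B+sI$ with $\mathrm{tr}\,B=0$ and $s=\tfrac1n\mathrm{tr}\,A=\tfrac1n\pi_1(A)$. Shifting eigenvalues by $s$ shows that for $j\geq 2$ the polynomial $\pi_j$ equals $\pi_j(B)$ modulo terms in $\pi_1,\dots,\pi_{j-1}$; hence $(\pi_1,\dots,\pi_n)$ and $(s,\pi_2(B),\dots,\pi_n(B))$ differ by a polynomial automorphism of $\CC^n$ and define the same fibration, which in the latter coordinates is the product of $\id_\CC$ on the $s$-line with the adjoint quotient $\rho:=(\pi_2,\dots,\pi_n)\colon\slalg_n(\CC)\to\CC^{n-1}$. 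Thus the fibres of $\pi$ are $\{s\}\times\rho^{-1}(\mathrm{pt})$; the fundamental vector fields $\Theta_{k\ell},\Xi_m$ are $s$-independent (they annihilate $\mathrm{tr}$) and tangent to the fibres of $\pi$; and a holomorphic vector field is fibre-preserving for $\pi$ exactly when it has no $\partial_s$-component and restricts, on each slice $\{s=\mathrm{const}\}$, to a field tangent to the fibres of $\rho$.

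Next I would use the classical fact from invariant theory that the $\CC[\slalg_n(\CC)]$-module of polynomial vector fields tangent to the fibres of the adjoint quotient $\rho$ is generated by the fundamental vector fields, i.e.\ equals $\mathcal{L}_n$ (see \cite{Kraft}; for $n=2$ this is the Koszul syzygy module of the single invariant). Combined with the product structure above, this identifies the $\CC[\mat{n}{n}{\CC}]$-module of polynomial fibre-preserving vector fields on $\mat{n}{n}{\CC}$ with $\bigoplus_{j\geq 0}s^j\mathcal{L}_n$. The sheaf of germs of fibre-preserving vector fields is coherent (the kernel of $T\mat{n}{n}{\CC}\to\pi^*T\CC^n$) and is generated by the global sections $\Theta_{k\ell},\Xi_m$; since $\mat{n}{n}{\CC}=\CC^{n^2}$ is Stein, its sections are $\hol(\mat{n}{n}{\CC})$-combinations of the $\Theta_{k\ell},\Xi_m$, and approximating the coefficients by Taylor polynomials shows that $\bigoplus_j s^j\mathcal{L}_n$ is dense, in the compact-open topology, in the Lie algebra of all holomorphic fibre-preserving vector fields. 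For a Runge domain $\Omega$ with $\pi^{-1}(\pi(\Omega))=\Omega$ the same coherent sheaf restricts to the sheaf of vector fields tangent to the fibres of $\pi|_\Omega$ (the saturation hypothesis makes these precisely the fibres of $\pi$ lying over $\pi(\Omega)$), and Cartan's Theorems A and B together with the Runge property allow one to approximate its sections over $\Omega$ by sections over $\mat{n}{n}{\CC}$, hence again by elements of $\bigoplus_j s^j\mathcal{L}_n$.

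It remains to exhibit enough $\CC$-complete fibre-preserving fields. By Proposition \ref{propalldegrees}, $\mathcal{L}_n=\mathcal{A}_n$, the Lie algebra generated by the $\slgrp_n(\CC)$-overshears $f\cdot\Theta$ with $\Theta$ a fundamental vector field, $f$ a monomial of degree $\leq 2$ and $\Theta^2(f)=0$. Since $\Theta(s)=0$ for every fundamental field, $s^j f\cdot\Theta$ is again a $\Theta$-overshear ($\Theta^2(s^j f)=s^j\Theta^2(f)=0$), hence $\CC$-complete by Lemma \ref{lemshearflow}; and since a bracket of fields annihilating $s$ again annihilates $s$, the Lie algebra generated by $\{\,s^j f\cdot\Theta\,\}_{j\geq 0}$ is exactly $\bigoplus_j s^j\mathcal{A}_n=\bigoplus_j s^j\mathcal{L}_n$. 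Moreover, by the explicit formula of Lemma \ref{lemshearflow} the flow of $s^j f\cdot\Theta$ has the form $X\mapsto\exp(c(t,X)V)\,X\,\exp(-c(t,X)V)$ for a fixed $V\in\slalg_n(\CC)$ and an entire function $c(t,X)$, i.e.\ it acts by conjugation; conjugation preserves the spectrum, hence every fibre of $\pi$ and every saturated domain $\Omega$. Therefore these overshears restrict to $\CC$-complete fibre-preserving fields on $\Omega$ (and on $\mat{n}{n}{\CC}$ itself), so the Lie algebra generated by all $\CC$-complete fibre-preserving fields contains the dense subspace $\bigoplus_j s^j\mathcal{L}_n$; this is the fibred density property for $\pi$ and for every such $\pi|_\Omega$.

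The step I expect to be the main obstacle is bridging the purely computational Proposition \ref{propalldegrees}, which only controls $\mathcal{A}_n$, with the analytically defined module of \emph{all} fibre-preserving vector fields --- this rests on the nontrivial invariant-theoretic statement that the fundamental vector fields generate the module of fibre-tangent polynomial vector fields of the adjoint quotient, which for $n=2$ is an easy Koszul computation but for general $n$ genuinely uses the structure of the adjoint representation --- together with the careful bookkeeping needed to carry the extra scalar variable $s$ through both Proposition \ref{propalldegrees} and the Runge approximation on $\Omega$. The observation that every shear and overshear flow acts by conjugation, and so stays complete on any saturated domain, is what makes the statement for Runge $\Omega$ work at all.
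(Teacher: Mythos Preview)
Your proof is correct and follows essentially the same route as the paper's: the key invariant-theoretic input (fundamental fields generate the module of fibre-tangent vector fields for the adjoint quotient), Runge/Cartan approximation to reduce to polynomial coefficients, and Proposition~\ref{propalldegrees} to recognize everything as lying in $\mathcal{A}_n$. The only differences are presentational: the paper obtains the generation statement from Dixmier's holomorphic theorem (Corollary~\ref{cor-Dix}, ultimately Kostant), whereas you cite the polynomial version and then invoke coherence; and the paper disposes of the trace direction by appealing to Example~\ref{trivialexample} for trivial product fibrations, whereas you argue directly that $s^j f\cdot\Theta$ is again a $\Theta$-overshear and that the resulting Lie algebra is $\bigoplus_j s^j\mathcal{A}_n$. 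Your explicit remark that the overshear flows act by conjugation, and therefore remain complete on any saturated Runge $\Omega$, is a point the paper leaves implicit.
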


\begin{cor}
\label{fibredforspectralball}
The natural fibration $\pi \colon \sball_n \to \sdisk_n$ has the fibred density property.
\end{cor}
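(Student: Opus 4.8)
The plan is to reduce Corollary~\ref{fibredforspectralball} to Theorem~\ref{fibredforspectral}. For this I would verify two things: that $\sball_n$ is saturated with respect to the quotient map $\pi$, meaning $\pi^{-1}(\pi(\sball_n))=\sball_n$, so that the restricted fibration really is $\pi\colon\sball_n\to\sdisk_n$; and that $\sball_n$ is a Runge domain in $\mat{n}{n}{\CC}$. Once both hold, Theorem~\ref{fibredforspectral} applied to $\Omega=\sball_n$ gives the claim at once. The only step that requires an actual argument is the Runge property.

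Saturation is essentially a tautology: the eigenvalues of a matrix $A$ are the roots of its characteristic polynomial $\chi_A$, and the coefficients of $\chi_A$ are $\pi_1(A),\dots,\pi_n(A)$ up to sign, so the condition $\rho(A)<1$ --- that all eigenvalues of $A$ lie in $\udisk$ --- depends only on $\pi(A)$. On the other hand $\sdisk_n=(\sigma_1,\dots,\sigma_n)(\udisk^n)$ is, by definition, exactly the set of coefficient tuples of monic polynomials of degree $n$ all of whose roots lie in $\udisk$; each such tuple is realized by $\pi$ (for instance by a diagonal matrix with the roots on the diagonal, showing incidentally that $\pi\colon\mat{n}{n}{\CC}\to\CC^n$ is onto). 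Hence $\sball_n=\pi^{-1}(\sdisk_n)$, which gives both $\pi^{-1}(\pi(\sball_n))=\sball_n$ and $\pi(\sball_n)=\sdisk_n$; in particular the restriction of $\pi$ to $\sball_n$ is the natural fibration of the statement, and $\sball_n$, being balanced, is a connected domain.

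For the Runge property I would use that the spectral radius is absolutely homogeneous, $\rho(\zeta A)=|\zeta|\,\rho(A)$ for $\zeta\in\CC$, so that $\sball_n$ is a balanced domain: $A\in\sball_n$ and $|\zeta|\le 1$ force $\zeta A\in\sball_n$. On a balanced domain containing the origin, every $f\in\hol(\sball_n)$ decomposes as a series $f=\sum_{k\ge 0}P_k$ of homogeneous polynomials $P_k$ of degree $k$ converging uniformly on compact subsets --- obtained by expanding $\zeta\mapsto f(\zeta A)$ in a Taylor series on a disc slightly larger than the closed unit disc and setting $\zeta=1$. Thus polynomials are dense in $\hol(\sball_n)$ in the compact-open topology, so a fortiori the restriction map $\hol(\mat{n}{n}{\CC})\to\hol(\sball_n)$ has dense image and $\sball_n$ is Runge in $\mat{n}{n}{\CC}$. (Alternatively, $\log\rho$ is plurisubharmonic, being the decreasing limit of the plurisubharmonic functions $\frac1k\log\|A^k\|$, so $\sball_n$ is a balanced domain of holomorphy and hence polynomially convex.) With both hypotheses of Theorem~\ref{fibredforspectral} in place for $\Omega=\sball_n$, the corollary follows. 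The step I expect to be the crux --- though in the end it is mild --- is precisely establishing the Runge property; everything hinges on the elementary remark that $\sball_n$ is balanced.
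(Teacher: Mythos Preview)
Your proposal is correct and follows essentially the same approach as the paper: verify $\sball_n=\pi^{-1}(\sdisk_n)$ for saturation, observe that $\sball_n$ is balanced (by homogeneity of the spectral radius) and hence Runge, then apply Theorem~\ref{fibredforspectral}. You supply more detail than the paper on why balanced domains are Runge, but the argument is the same.
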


\begin{proof}
The condition $\pi^{-1}(\pi(\sball_n)) = \sball_n$ is clear, because $\sball_n = \pi^{-1}(\sdisk_n)$. The domain $\sball_n \subset \mat{n}{n}{\CC}$ is balanced since obviously $\lambda z \in \sball_n$ for all $z \in \sball_n$ and all $\lambda \in \udisk$. Therefore it is Runge and then by the preceding remark it enjoys the fibred density property.
\end{proof}

We will need the following terminology from representation theory and a result of Dixmier \cite{Dixmier}.

%

\begin{definition}
Let $\mathfrak{g}$ be a complex Lie algebra and $f$ a holomorphic function on $\mathfrak{g}$. Then $f$ is called \emph{invariant} if
\[
\forall x_0 \in \mathfrak{g} \; \forall x \in \mathfrak{g} \quad [x, x_0] f(x) = 0
.\]
Let $U \subseteq \mathfrak{g}$ be an open subset and $f$ a function defined on $U$. A function $f$ is called \emph{locally invariant} if this holds $\forall x_0 \in \mathfrak{g} \; \forall x \in U$.
\end{definition}

%

\begin{theorem}[\cite{Dixmier}*{Th\'eor\`eme 2.4}]
\label{thmDixmier}
Let $\mathfrak{g}$ be a complex semi-simple Lie algebra. Let $U \subseteq \mathfrak{g}$ be an open Stein subset and $\Theta$ a holomorphic vector field on $U$. Then the following conditions are equivalent:
\begin{enumerate}
\item $\Theta$ annihilates the locally invariant functions on $U$.
\item There exists a holomorphic map $g \colon U \to \mathfrak{g}$ such that $\Theta(x) = [x, g(x)]$ for all $x \in U$.
\end{enumerate}
\end{theorem}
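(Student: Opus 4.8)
This is Dixmier's theorem, so the task is to reconstruct its proof for a semisimple $\mathfrak{g}$. The implication $(2) \Rightarrow (1)$ is immediate: if $\Theta(x) = [x, g(x)]$ and $f$ is locally invariant on $U$, then for each $x \in U$ we have $\Theta(x) f = [x, g(x)] f = 0$ by the defining property of local invariance (taking $x_0 = x$ and the element $g(x)$ in the quantifier $\forall x \in \mathfrak{g}$). The content is entirely in $(1) \Rightarrow (2)$.

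\textbf{The key reduction.} Fix $x \in U$ and let $\mathfrak{g}_x := \ker(\mathrm{ad}_x) = \{ y : [x,y] = 0\}$ be the centralizer. The tangent space image $\{[x, y] : y \in \mathfrak{g}\}$ equals $(\mathrm{ad}_x)(\mathfrak{g})$, and since $\mathfrak{g}$ is semisimple the Killing form is non-degenerate and $\mathrm{ad}_x$ is skew with respect to it, so $(\mathrm{ad}_x)(\mathfrak{g}) = \mathfrak{g}_x^{\perp}$, the Killing-orthogonal complement of the centralizer. Pointwise, condition $(1)$ says exactly that $\Theta(x)$, viewed via the Killing form as an element of $\mathfrak{g}$, is orthogonal to the common kernel of the differentials $d_x h$ of all locally invariant $h$; I would show that this common kernel is precisely $\mathfrak{g}_x$ (the differentials of invariants at $x$ span $\mathfrak{g}_x^{\perp}$ — this is the heart and uses that on the regular semisimple locus the invariants cut out the adjoint orbits with the expected dimension, by Chevalley's restriction theorem, and a limiting/algebraic argument to cover all of $\mathfrak{g}$). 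Consequently $\Theta(x) \in \mathfrak{g}_x^{\perp} = (\mathrm{ad}_x)(\mathfrak{g})$ for every $x \in U$, i.e.\ pointwise there exists some $y = g(x)$ with $\Theta(x) = [x, g(x)]$.

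\textbf{From pointwise to holomorphic.} The remaining issue is to choose $g(x)$ holomorphically in $x$. This is where Stein-ness of $U$ enters. I would set up the holomorphic vector bundle map $A \colon U \times \mathfrak{g} \to U \times \mathfrak{g}$, $(x, y) \mapsto (x, [x,y])$, whose image is the subsheaf $\mathcal{I} \subseteq U \times \mathfrak{g}$ of pairs $(x, v)$ with $v \in (\mathrm{ad}_x)(\mathfrak{g})$; this is a coherent subsheaf (image of a morphism of coherent sheaves). The section $x \mapsto (x, \Theta(x))$ lands in $\mathcal{I}$ by the previous step. We want to lift it through $A$. The obstruction to a local lift vanishes, and the obstruction to gluing local lifts into a global one lives in $H^1(U, \mathcal{K})$ where $\mathcal{K} = \ker A$ is the coherent sheaf whose fibre over $x$ is (contained in) the centralizer $\mathfrak{g}_x$; since $U$ is Stein, Cartan's Theorem B gives $H^1(U, \mathcal{K}) = 0$, so a global holomorphic lift $g \colon U \to \mathfrak{g}$ exists.

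\textbf{Main obstacle.} The genuinely delicate step is the pointwise statement that the differentials at $x$ of \emph{locally} invariant holomorphic functions span $\mathfrak{g}_x^{\perp}$ — equivalently, that local invariants separate the adjoint-orbit directions at every point, including non-regular and nilpotent $x$ where the orbit dimension drops. One must exploit that local invariants near a regular semisimple point restrict (via Chevalley) to functions of the Cartan coordinates whose differentials already fill $\mathfrak{g}_x^{\perp}$ there, and then propagate this to arbitrary $x \in U$ by a continuity/closedness argument for the coherent sheaf $\mathcal{I}$ (the inclusion $\{(x,\Theta(x))\} \subseteq \mathcal{I}$ is a closed condition, so it suffices to verify it on a dense set of $x$, e.g.\ the regular semisimple locus intersected with $U$, provided that locus is dense in $U$ — which it is, being the complement of a proper analytic subset). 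Handling the possibility that $U$ misses the regular locus, or is small, requires the algebraic fact that invariants of $\mathfrak{g}$ (globally defined polynomials) already have differentials spanning $\mathfrak{g}_x^{\perp}$ at every $x$, a standard consequence of Kostant's results on the adjoint quotient; invoking that makes the density argument unnecessary and the proof clean.
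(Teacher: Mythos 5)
The paper does not actually prove this statement: it is quoted verbatim from Dixmier's Th\'eor\`eme~2.4, and the only clue the paper gives about the proof is the parenthetical remark in the proof of Corollary~\ref{cor-Dix}, namely that the local content is due to Kostant \cite{Kostant} and Dixmier's own contribution is the application of Cartan's Theorem~B. Your reconstruction has exactly that architecture, so the strategy is the right one. However, there are two problems with the argument as written. The ``key reduction'' paragraph swaps $\mathfrak{g}_x$ and $\mathfrak{g}_x^\perp$ twice: condition~(1) places $\Theta(x)$ \emph{in} the common kernel of the $d_x h$ (not orthogonal to it), and because the Killing gradient of a locally invariant $h$ lies in $\mathfrak{g}_x$ (indeed $d_x h$ vanishes on $\mathrm{ad}_x(\mathfrak{g}) = \mathfrak{g}_x^\perp$, hence $\nabla h(x) \in (\mathfrak{g}_x^\perp)^\perp = \mathfrak{g}_x$), that common kernel is $\mathfrak{g}_x^\perp$, not $\mathfrak{g}_x$. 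Your two slips cancel and the conclusion $\Theta(x) \in \mathfrak{g}_x^\perp = \mathrm{ad}_x(\mathfrak{g})$ is correct, but the intermediate sentences are false as stated.

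The more substantive issue is the passage ``from pointwise to holomorphic.'' The fiberwise inclusion $\Theta(x) \in \mathrm{ad}_x(\mathfrak{g})$ for each $x$ does \emph{not} by itself imply that $\Theta$ is a section of the image sheaf $\mathcal{I}$: a germ $v$ at $x_0$ can take values in the fiberwise image of $\mathrm{ad}$ without being expressible as $[\cdot, g]$ for a single holomorphic germ $g$, and this failure happens precisely where the rank of $\mathrm{ad}_x$ drops --- the singular locus that is the whole point of the theorem. So the sentence ``the section $x \mapsto (x,\Theta(x))$ lands in $\mathcal{I}$ by the previous step'' elides exactly the dangerous step. The clean fix is what you hint at in your last sentence: invoke Kostant's result at the level of \emph{stalks} --- the germs of the fields $[\cdot, y]$, $y \in \mathfrak{g}$, generate the stalk of the sheaf of germs of vector fields annihilating local invariants at every point, including singular ones --- which gives equality $\mathcal{I} = \mathcal{J}$ of coherent sheaves (where $\mathcal{J}$ is the sheaf defined by condition~(1)), and then Theorem~B on the Stein set $U$ lifts $\Theta \in H^0(U,\mathcal{I})$ to a global $g$. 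Your density/closedness argument can also be made rigorous (the image of $\Theta$ in the coherent quotient $\mathcal{O}^{\dim \mathfrak{g}}/\mathcal{I}$ has analytic support, so vanishing on the regular semisimple locus, where $\mathcal{I}$ is locally free and pointwise and sheaf-theoretic membership coincide, forces vanishing everywhere), but it needs to be said explicitly.
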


\begin{cor}
\label{cor-Dix}
Every fibre-preserving holomorphic vector field on an $\Omega \subseteq \mat{n}{n}{\CC}$ which is Stein and satisfies $\Omega = \pi^{-1}(\pi(\Omega))$ can be written as a holomorphic linear combination of the vector fields $\Theta_{ab}, \Xi_c$.
\end{cor}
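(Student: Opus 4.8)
The plan is to reduce the statement to the following claim and then expand in coordinates: \emph{every fibre-preserving holomorphic vector field $\Theta$ on $\Omega$ has the form $\Theta(X) = [X, g(X)]$ for some holomorphic map $g \colon \Omega \to \mat{n}{n}{\CC}$.} Granting this, write $g(X) = \sum_{a \neq b} g_{ab}(X)\, E_{ab} + \sum_{m} h_m(X)\, H_m + \gamma(X)\, \id$ with $g_{ab}, h_m, \gamma \in \hol(\Omega)$. Since scalar matrices are central and, by the definition of the fundamental vector fields of the conjugation action, $[X, E_{ab}] = -\Theta_{ab}|_X$ and $[X, H_m] = -\Xi_m|_X$, we obtain
\[
\Theta \;=\; -\sum_{a \neq b} g_{ab}\, \Theta_{ab} \;-\; \sum_m h_m\, \Xi_m,
\]
the desired holomorphic linear combination. (Each $\Theta_{ab}$, $\Xi_m$ is itself fibre-preserving, being tangent to the $\slgrp_n(\CC)$-orbits, which lie inside the fibres of $\pi$; and conversely $[\,\cdot\,, g]$ is always fibre-preserving because the $\pi_j$ are conjugation-invariant, so the claim is consistent.)

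To prove the claim, note that $\Theta$ being fibre-preserving means $\Theta(\pi_j) = 0$ for $j = 1, \dots, n$. As $\pi_1 = \operatorname{tr}$, the identity $\operatorname{tr} \circ\, \Theta = \Theta(\operatorname{tr}) = 0$ shows $\Theta$ is valued in $\slalg_n(\CC)$ under $\mat{n}{n}{\CC} \cong \slalg_n(\CC) \oplus \CC\,\id$; in particular $[X, g(X)] = [X_0, g(X)]$ with $X_0 := X - \tfrac1n (\operatorname{tr} X)\,\id$, so we may seek $g$ valued in $\slalg_n(\CC)$. Identify $\mat{n}{n}{\CC} \cong \slalg_n(\CC) \times \CC$ by $X \mapsto (X_0, s)$, $s := \tfrac1n \operatorname{tr} X$, and for each $s$ put $\Omega_s := \{ X_0 : (X_0, s) \in \Omega \}$, a closed complex submanifold of the Stein manifold $\Omega$ (hence Stein) and open in $\slalg_n(\CC)$. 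Being $\slalg_n(\CC)$-valued, $\Theta$ is tangent to the slices, so $\Theta(\cdot, s)$ is a holomorphic vector field on $\Omega_s$. The restrictions $\bar\pi_2, \dots, \bar\pi_n$ to $\slalg_n(\CC)$ of the elementary symmetric functions of the eigenvalues are, by the shift formula for elementary symmetric polynomials, polynomials in $\pi_1, \dots, \pi_n$ (and conversely), so $\Theta(\cdot, s)$ annihilates $\bar\pi_2, \dots, \bar\pi_n$ on $\Omega_s$. At a regular semisimple point of $\slalg_n(\CC)$ the differentials $d\bar\pi_2, \dots, d\bar\pi_n$ are linearly independent and span the conormal space to the $\slgrp_n(\CC)$-orbit (the adjoint quotient is a submersion on the regular locus); since the differential of any locally invariant function lies in that conormal space, $\Theta(\cdot, s)(f)$ is, on the dense set of regular semisimple points of $\Omega_s$, a holomorphic combination of the vanishing functions $\Theta(\cdot, s)(\bar\pi_j)$, hence $\equiv 0$ on $\Omega_s$. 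Thus $\Theta(\cdot, s)$ annihilates the locally invariant functions on $\Omega_s$, and Dixmier's theorem (Theorem \ref{thmDixmier}, applied to the semisimple Lie algebra $\slalg_n(\CC)$) yields a holomorphic $g_s \colon \Omega_s \to \slalg_n(\CC)$ with $\Theta(X_0, s) = [X_0, g_s(X_0)]$.

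It remains to choose $g_s$ to depend holomorphically on $(X_0, s)$. Consider the $\hol$-linear coherent sheaf morphism $\operatorname{ad} \colon \hol \otimes_\CC \slalg_n(\CC) \to \hol \otimes_\CC \mat{n}{n}{\CC}$, $(X, V) \mapsto [X, V]$, on $\mat{n}{n}{\CC}$; let $\mathcal{I}$ be its image and $\mathcal{C}$ its cokernel, both coherent. Since $\Omega$ is Stein, $H^1(\Omega, \ker \operatorname{ad}) = 0$, so any global section of $\mathcal{I}$ over $\Omega$ is of the form $[\,\cdot\,, g]$ with $g \in \hol(\Omega) \otimes \slalg_n(\CC)$; hence it suffices to show that $\Theta$ is a section of $\mathcal{I}$, i.e.\ that the induced section $\bar\Theta$ of $\mathcal{C}$ vanishes. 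Because $[X, V] = [X_0, V]$, the morphism $\operatorname{ad}$ does not involve $s$, so $\mathcal{C}$ is flat over the $\CC$-factor of scalars, and its restriction to each slice $\{ s = s_0 \}$ is the cokernel of the analogous morphism $(X_0, V) \mapsto [X_0, V]$ on $\slalg_n(\CC)$; by the previous paragraph $\bar\Theta$ restricts to $0$ on every slice. A section of a coherent sheaf that is flat over a one-dimensional factor and vanishes on every fibre of the projection to that factor must vanish: vanishing on the fibre over $s_0$ means divisibility by $s - s_0$ (flatness), and iterating at a point with scalar part $s_0$, using that $\bar\Theta$ vanishes on every nearby fibre, places its stalk in $\bigcap_k (s - s_0)^k \mathcal{C}_x = 0$ by Krull's intersection theorem. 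Therefore $\bar\Theta = 0$, so $\Theta = [\,\cdot\,, g]$ globally on $\Omega$, and the first step concludes the proof.

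The main obstacle is this last step: upgrading the fibrewise conclusion of Dixmier's theorem to a choice of $g$ holomorphic in the scalar parameter $s$. (Alternatively, one could observe that Dixmier's proof, which proceeds via Cartan's Theorems~A and~B, carries the parameter $s$ along unchanged; the coherent-sheaf formulation above isolates exactly what must be checked.)
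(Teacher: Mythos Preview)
Your reduction to the claim $\Theta(X)=[X,g(X)]$ and the slicewise application of Dixmier's theorem are both correct, and your flatness computation for $\mathcal{C}$ over the scalar direction is fine. The gap is in the Krull iteration. After writing $\bar\Theta=(s-s_0)\bar\Theta_1$, you need $\bar\Theta_1$ again to vanish on \emph{every} slice in order to iterate. For $s_1\neq s_0$ this is immediate (divide by the nonzero scalar $s_1-s_0$), but for $s_1=s_0$ it is not: at the stalk $\mathcal{C}_x$ with $x$ on $\{s=s_0\}$ the functions $s-s_1$ for $s_1\neq s_0$ are units, so ``$\bar\Theta$ vanishes on every nearby fibre'' contributes nothing beyond $\bar\Theta_x\in(s-s_0)\mathcal{C}_x$, and you cannot conclude $\bar\Theta_1|_{\{s=s_0\}}=0$ from stalk data alone. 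What is missing is a continuity step: on a product neighbourhood $U_1\times D$, lift $\bar\Theta_1$ through a presentation $\hol^p\xrightarrow{A(X_0)}\hol^q\to\mathcal{C}\to 0$; then $s_1\mapsto\bar\Theta_1|_{\{s=s_1\}}$ is continuous from $D$ into the Fr\'echet space $\mathcal{C}_0(U_1)=\hol(U_1)^q/\mathcal{I}_0(U_1)$ (the denominator is closed because $U_1$ is Stein), and since it vanishes on $D\setminus\{s_0\}$ it vanishes at $s_0$. With this, the iteration and Krull go through.

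The paper's route is more direct and avoids the slicewise detour entirely. Rather than applying Dixmier on each slice and then gluing, it invokes the stalk-level content behind Dixmier (Kostant's theorem): the $\Theta_{ab},\Xi_c$ generate the sheaf $\mathcal{T}^{\bar\pi}$ of fibre-preserving germs at every point of $\slalg_n(\CC)$, i.e.\ one has a surjection of sheaves $\hol_{\slalg_n}^{n^2-1}\to\mathcal{T}^{\bar\pi}_{\slalg_n}\to 0$. Since the fibres of $\pi$ on $\glalg_n\cong\slalg_n\times\CC$ are just the fibres of $\bar\pi$ sitting in the slices, tensoring with $\hol_\CC$ (the completed tensor product, via Tr\`eves) preserves right-exactness and yields $\hol_{\glalg_n}^{n^2-1}\to\mathcal{T}^{\pi}_{\glalg_n}\to 0$. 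A single application of Cartan's Theorem~B on the Stein $\Omega$ then gives the global surjection. Your parenthetical alternative---running Dixmier's proof with the parameter $s$ carried along---is exactly this; it is simpler than the gluing argument you attempt first, and it is what the paper does.
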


\begin{proof} Each fibre in $\Omega$ is also a fibre in $\mat{n}{n}{\CC}$ due to $\Omega = \pi^{-1}(\pi(\Omega))$. It follows from Theorem \ref{thmDixmier} that the vector fields $\Theta_{ab}, \Xi_c$ (which form a basis of the Lie algebra)  generate the stalk of the sheaf of germs of fibre-preserving vector fields for the categorical quotient map $\pi\vert_{ \slalg_n(\CC)} \colon \slalg_n(\CC) \to \CC^{n-1}$ at every point, including the singular points. (This result is in fact due to Kostant \cite{Kostant}, and Dixmier's contribution in proving the above Theorem lies in the application of Cartan's Theorem~B.)
If we denote by $\mathcal{T}$ the tangent sheaf and by $\mathcal{T}^\pi$ its subsheaf of sections tangent to $\pi$, this is equivalent to the following exact sequence of sheaves:
\[
\hol_{\slalg_n(\CC)}^{n^2 - 1} \to \mathcal{T}_{\slalg_n(\CC)}^\pi \to 0
\]

Remember that as linear representations $\mat{n}{n}{\CC} \cong \slalg_n(\CC) \oplus \CC$ and thus $\pi \colon \mat{n}{n}{\CC} \to \CC^n $ is equal to $\pi\vert_{ \slalg_n(\CC) } \oplus \id$. 

We tensor the above exact sequence with (the nuclear space of)  holomorphic functions on $\CC$ which corresponds to the trace, and obtain an exact sequence
\[
\hol_{\glalg_n(\CC)}^{n^2 - 1} \to \mathcal{T}_{\glalg_n(\CC)}^\pi \to 0
\]
where we understand the tensor product as a completed tensor product in the sense of Grothendieck. This still yields a right-exact sequence, since we can apply e.g.\ \cite{Treves}*{Proposition 43.9} to small open product sets.

Since $\Omega$ is Stein and the sheaf of germs of fibre-preserving vector fields is coherent, a standard application of Cartan's Theorem~B yields the result. 
\end{proof}




We are now able to prove the fibred density property for the spectral ball and to determine its automorphism group.

\begin{proof}[Proof of Theorem \ref{fibredforspectral}]
Let $\Theta$ be a fibre-preserving holomorphic vector field on $\Omega_n \subseteq \mat{n}{n}{\CC}$. According to Corollary \ref{cor-Dix} we can write $\Theta$ as a holomorphic linear combination of the vector fields $\Theta_{a b}$ and $\Xi_c$. 
In $\mat{n}{n}{\CC} \cong \CC^{n^2}$ we can approximate these holomorphic linear combinations by polynomial linear combinations. This works as well for any Runge domain $\Omega \subseteq \mat{n}{n}{\CC}$. Note that all holomorphic linear combinations of such vector fields are automatically tangent to the fibres of $\sigma$. By Proposition \ref{propalldegrees} the Lie algebra $\mathcal{L}_n$ is generated by the vector fields in $\mathcal{A}_n$ which correspond to $\slgrp_n(\CC)$-shears and $\slgrp_n(\CC)$-overshears with degree $d \leq 2$. Since we are interested in vector fields with polynomial coefficients not only on $\slalg_n(\CC)$, but also on $\glalg_n(\CC)$, we need to adjoin the trace as additional variable which gives an additional trivial fibre that hence enjoys the fibred density property as well, see Example \ref{trivialexample}.
\end{proof}

\begin{proof}[Proof of Theorem \ref{thmspectraldense}]
\label{proofspectraldense}
Let $f \colon \Omega_n \to \Omega_n$ be a holomorphic automorphism. By \cite{RanWhi1991}*{Thm. 4} we find a M\"obius transformation $h \colon \Omega_n \to \Omega_n$
\begin{equation}
\label{eqmoebius}
A \mapsto \gamma \cdot (A - \alpha \cdot \id) \cdot (\id - \overline{\alpha} A )^{-1}, \quad \alpha \in \udisk, \gamma \in \boundary \udisk
\end{equation}
such that the composition $g := f \circ h^{-1} \colon \Omega_n \to \Omega_n$ is fibre-preserving, i.e.\ $\pi \circ g = \pi$, and in addition $g(0) = 0$.
By \cite{RanWhi1991}*{Thm. 2} we know that under these circumstances $g^\prime(0)$ is a linear automorphism of $\Omega_n$. According to \cite{RanWhi1991}*{Thm.~4, Cor.} it is of the form
\begin{equation*}
A \mapsto G \cdot A \cdot G^{-1} \quad \text{ or } \quad A \mapsto G \cdot A^{t} \cdot G^{-1}, \quad \text{ with } G \in \slgrp_n(\CC)
\end{equation*}
We can connect $g$ to the identity, indeed we connect it to its linear part $g^\prime(0)$ by a $\cont^1$-homotopy $[0, 1] \ni s \mapsto \frac{g(s A)}{s} \to g^\prime(0)(A)$ and then use the fact that any conjugation by a $G \in \slgrp_n(\CC)$ can be connected to the identity or to the matrix transposition.

Therefore we may assume that $f$, after composition with a M\"obius transformation and possibly a transposition is in the identity component of $\faut{\Omega_n}$. By Theorem \ref{fibredforspectral} the spectral ball has the fibred density property and we can apply Theorem \ref{thmfibredAL} with $\mathcal{A} = \mathcal{A}_n$ generated by $\slgrp_n(\CC)$-overshears (with coefficients of at most quadratic degree) as the dense Lie subalgebra generated by complete vector fields.
\end{proof}


We would like to compare our result to a question asked by 
Ransford and White \cite{RanWhi1991} when they started the study of holomorphic automorphisms of the spectral ball.
They asked whether the automorphisms of the spectral ball are compositions of the M\"obius transformations, the transposition and conjugations of the following form
$X \mapsto \exp(f(X)) \cdot X \cdot \exp(f(X))$ with $f \colon \Omega_n \to \slalg_n(\CC)$ such that it is $\slgrp_n(\CC)$-invariant, i.e.\ $f(G X G^{-1}) = f(X)$ for all $X \in \Omega_n$ and all $G \in \slgrp_n(\CC)$. A counterexample was already given in \cite{Kosinski2012}.

It is however easy to see that the invariance condition is too restrictive: Choose a generic fibre $X_\lambda$. It is a $\slgrp_n(\CC)$-homogeneous complex manifold and hence isomorphic to $\slgrp_n(\CC)/H$ for a reductive subgroup $H$ of $\slgrp_n(\CC)$. Hence there exists a (up to a scalar constant) unique algebraic volume form $\omega^\prime$ on $X_\lambda$ which is invariant under the action of $\slgrp_n(\CC)$ by left-multiplication, see \cite{voldens}*{Appendix}. This volume form $\omega^\prime$ corresponds to a $\slgrp_n(\CC)$-conjugation invariant volume form $\omega$. The invariance condition for the automorphisms of Ransford and White is equivalent to saying that $G(X)$ depends only on $\sigma(X)$. Therefore it necessarily preserves the volume form $\omega$. However, the overshears which are not shears never preserve this volume form, since 
\[
\mathop{\mathrm{div}}( f \Theta) = f \mathop{\mathrm{div}} \Theta + \Theta(f)
.\]
The divergence of a vector field with algebraic flow map (e.g.\ all the $\Theta_{a b}$) necessarily vanishes, and $\Theta(f) \neq 0$ for such overshears. Since all the automorphisms of Ransford and White however have vanishing $\omega$-divergence, no finite or infinite composition of them can yield such an overshear. In the next section we will prove a much stronger result.

\section{Dense, but meagre}
In Theorem \ref{thmspectraldense} we have determined a dense subgroup of $\aut{\sball_n}$. In this section we will prove that this dense subgroup is not the whole $\aut{\sball_n}$, in fact a meagre subset. We follow the original strategy of Anders\'en and Lempert \cite{AL1992}*{Sec. 7}

\begin{definition}
By $P_m$ we denote the vector space of polynomials in $\glalg_n(\CC)^\ast$ of total degree at most $m \in \NN_0$. By $\widetilde{P}_m := P_m \setminus P_{m-1} \cup \{0\}$ we denote the subspace of homogeneous polynomials of total degree $m$.
\end{definition}

The polynomial vector fields $\Theta_{ab}, \; a \neq b,$ and $\Xi_a$ act as $\CC$-linear derivations on the polynomials $\widetilde{P}_m$ into $\widetilde{P}_m$, preserving their total degree. In the following, we want to estimate the dimension of the kernels of $\Theta_{ab} | P_m$ and $\Xi_a | P_m$. By index permutation, it is sufficient to consider $\Theta_{12}$ and $\Xi_1$.

\begin{proposition} We have the following estimates:
\label{prop-growth}
\begin{enumerate}
\item $\ker \left( \Theta_{1 2}^2 | P_m \right)$ grows at most polynomially of degree $n^2 - 1$ in $m$.
\item $\ker \left( \Xi_{1}^2 | P_m \right)$ grows at most polynomially of degree $n^2 - 1$ in $m$.
\end{enumerate}
\end{proposition}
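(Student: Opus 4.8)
The plan is to bound each of the two kernels, for fixed $n$ and growing $m$, by a fixed multiple of the space of polynomials of degree at most $m$ on a suitable \emph{affine‑linear slice} $\Sigma \subset \mat{n}{n}{\CC}$ of dimension $n^2-1$. Since the space of polynomials of degree at most $m$ on $\CC^{n^2-1}$ has dimension $\binom{m+n^2-1}{n^2-1}$, a polynomial in $m$ of degree exactly $n^2-1$, this yields the asserted growth. A pleasant feature of this approach is that it avoids any discussion of whether the invariant rings $\ker\Theta_{12}$ and $\ker\Xi_1$ are finitely generated.

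For part (1) I would first note that $\Theta_{12}$ maps $P_m$ to $P_m$, since it preserves the total degree of monomials, and that its flow is the conjugation action $\phi_t(X) = (\id + t E_{12})\cdot X\cdot(\id - t E_{12})$. For any holomorphic $f$ one has $\frac{\mathrm{d}^j}{\mathrm{d} t^j} f(\phi_t(X)) = (\Theta_{12}^j f)(\phi_t(X))$, so $f \in \ker\Theta_{12}^2$ is equivalent to $t \mapsto f(\phi_t(X))$ being affine‑linear for every $X$, i.e.\ $f(\phi_t(X)) = f(X) + t\cdot(\Theta_{12}f)(X)$. A one‑line computation gives $x_{11}(\phi_t(X)) = x_{11} + t\,x_{21}$, which is linear in $t$; hence on the dense set $\{ x_{21} \neq 0 \}$ the point $y := \phi_s(X)$ with $s := -x_{11}/x_{21}$ lies on the linear hyperplane $\Sigma := \{ x_{11} = 0 \}$, and recovering $X = \phi_{-s}(y)$ yields $f(X) = f(y) - s\cdot(\Theta_{12}f)(y)$. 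Thus the linear map $f \mapsto \big( f|_\Sigma,\, (\Theta_{12}f)|_\Sigma \big)$ is injective on $\ker(\Theta_{12}^2 | P_m)$; both components are polynomials of degree at most $m$ on $\Sigma \cong \CC^{n^2-1}$ (restriction to a linear subspace does not raise the degree, and $\Theta_{12}$ preserves $P_m$), so $\dim\ker(\Theta_{12}^2 | P_m) \le 2\binom{m+n^2-1}{n^2-1}$.

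For part (2) the argument collapses, because $\Xi_1$ is a diagonal (Euler‑type) vector field: every monomial $x^\alpha$ is an eigenvector with integer eigenvalue $\mu(\alpha)$, so $\ker(\Xi_1^2 | P_m) = \ker(\Xi_1 | P_m)$ is the span of the weight‑zero monomials of degree at most $m$. This is the degree‑$\le m$ part of the invariant ring of the linear $\CC^\ast$‑action generated by $\Xi_1$, under which the coordinate $x_{12}$ is scaled by $\lambda^2$. Consequently the orbit through any $X$ with $x_{12}(X) \neq 0$ meets the affine slice $\Sigma' := \{ x_{12} = 1 \} \cong \CC^{n^2-1}$, and since a weight‑zero polynomial is constant along orbits it is uniquely determined by its restriction to $\Sigma'$ on the dense set $\{ x_{12} \neq 0 \}$, hence everywhere. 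Therefore $\dim\ker(\Xi_1^2 | P_m) = \dim\ker(\Xi_1 | P_m) \le \binom{m+n^2-1}{n^2-1}$.

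The step I expect to require the most care is the choice of slice: it must be an affine‑linear subspace (so that restricting polynomials does not raise the degree) that is nonetheless met by the generic orbit. This works precisely because $\mathrm{ad}_{E_{12}} \neq 0$ in the unipotent case and because the relevant weight $w_{12} = 2$ of $\Xi_1$ is nonzero in the semisimple case; the explicit identities $x_{11}(\phi_t(X)) = x_{11} + t\,x_{21}$ and $x_{12}(\lambda\cdot X) = \lambda^2 x_{12}$ make both choices transparent and the recovery of $f$ from its data on $\Sigma$ completely explicit. Everything else is routine dimension counting.
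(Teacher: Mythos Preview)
Your proof is correct and takes a genuinely different route from the paper's. The paper handles $\Theta_{12}$ by putting $\Theta_{12}|\widetilde P_1$ into Jordan normal form (one $3\times 3$ block, $2n-4$ blocks of size $2\times 2$, the rest $1\times 1$) and then proving two inductive lemmas that track how the kernel dimension grows as one tensors in each block; for $\Xi_1$ it writes down an explicit combinatorial formula for the number of weight-zero monomials and estimates that sum. Your slice argument replaces all of this with a single geometric observation: a nonzero degree-preserving vector field has one-dimensional generic orbits, so a transversal affine hyperplane of dimension $n^2-1$ sees every invariant (and, in the unipotent case, the pair $(f,\Theta_{12}f)$ on the slice determines $f\in\ker\Theta_{12}^2$). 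This is shorter and more conceptual, and it works uniformly without distinguishing block types; it also makes transparent why the exponent is exactly $n^2-1$. What the paper's approach buys in exchange is finer information: Lemma~\ref{lem-Xi-growth} gives the exact value of $\dim\ker(\Xi_1|\widetilde P_m)$, and the block-by-block induction in principle yields sharper constants. For the purposes of Proposition~\ref{prop-growth} and its application in Corollary~\ref{cormeagreestimate}, however, only the growth order matters, and your argument delivers that more cheaply. It is also worth noting that your method comes close to a strategy for Conjecture~\ref{derivationconjecture}: any nonzero homogeneous derivation of degree zero is a linear vector field, hence complete, and one can ask for an affine-linear slice transverse to its generic orbit.
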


\begin{conjecture}
\label{derivationconjecture}
We conjecture that this estimates holds in more generality:
Let $k$ be a field of characteristic zero. Let $\Theta \colon k[x_1, \dots, x_N] \to  k[x_1, \dots, x_N]$ be a derivation which sends homogeneous polynomials to homogeneous polynomials of the same total degree. By $K_m$ we denote the kernel of $\Theta$ restricted to homogeneous polynomials of degree $m$. Then $\dim K_m$ grows polynomially in $m$ of degree at most $N-2$.
\end{conjecture}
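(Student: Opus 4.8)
The key observation is that the hypothesis forces $\Theta$ to be a \emph{linear} vector field: since $\Theta$ sends the degree-one part of $k[x_1,\dots,x_N]$ into itself, each $\Theta(x_i)$ is a linear form, so $\Theta = \sum_{i,j} c_{ij}\, x_j\, \partial_{x_i} =: \Theta_A$ for the matrix $A = (c_{ij}) \in \glalg_N(k)$; it acts on $S_m := k[x_1,\dots,x_N]_m$ (the homogeneous polynomials of degree $m$) as the operator $\rho_m(A)$ of the representation $\rho_m \colon \glalg_N(k) \to \glalg(S_m)$. (One must also assume $A \neq 0$: for $\Theta = 0$ one has $\dim K_m = \binom{m+N-1}{N-1}$, of growth degree $N-1$, so nondegeneracy of $\Theta$ is implicit in the statement.) Because the nullity of a $k$-linear map is unchanged under field extension and $\Theta_A$ remains a nonzero degree-preserving derivation over $\overline{k}$, I would reduce immediately to $k = \overline{k}$, i.e.\ to proving $\dim \ker \rho_m(A) = O(m^{N-2})$ for a fixed nonzero $A \in \glalg_N(\overline{k})$.

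The first real step is to reduce to the semisimple part of $A$. Let $A = D + N$ be the additive Jordan decomposition with $D$ semisimple, $N$ nilpotent and $DN = ND$. Since representations of reductive Lie algebras respect the Jordan decomposition (the centre of $\glalg_N$ acting on $S_m$ by the scalar $m$), $\rho_m(A) = \rho_m(D) + \rho_m(N)$ is the Jordan decomposition of $\rho_m(A)$. Decomposing $S_m$ into eigenspaces of $\rho_m(D)$, the operator $\rho_m(A)$ acts on the $\lambda$-eigenspace as $\lambda\cdot\id$ plus a commuting nilpotent, hence invertibly for $\lambda \neq 0$; thus $\ker \rho_m(A) = \ker\bigl(\rho_m(N)\big|_{\ker \rho_m(D)}\bigr) \subseteq \ker \rho_m(D)$ and so $\dim K_m \le \dim \ker \rho_m(D)$.

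Suppose first $D \neq 0$. Diagonalising $D = \mathrm{diag}(\lambda_1,\dots,\lambda_N)$ in linear coordinates $y_1,\dots,y_N$, the graded ring $\ker \Theta_D = \bigoplus_m \ker \rho_m(D)$ is the monomial subalgebra $k[M]$ spanned by the $y^a$ with $\sum_i a_i \lambda_i = 0$, where $M = \mathbb{Z}_{\ge 0}^N \cap L$ for the rational subspace $L = \{\, v \in \mathbb{Q}^N : \sum_i v_i \lambda_i = 0 \,\}$; since the $\lambda_i$ are not all zero, $\dim_{\mathbb{Q}} L \le N-1$. By Gordan's lemma $M$ is a finitely generated monoid, so $k[M]$ is a finitely generated graded algebra of Krull dimension $\le \dim L \le N-1$, and its Hilbert function $m \mapsto \dim_k \ker \rho_m(D)$ is eventually a polynomial of degree $\le N-2$; as $K_m \subseteq \ker \rho_m(D)$, the same bound holds for $\dim K_m$. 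The remaining case $D = 0$ (i.e.\ $A$ nilpotent and nonzero) is where I expect the main obstacle to lie, since the bound just obtained is then vacuous. Here one uses that $A$ is conjugate — conjugation does not change $\dim K_m$ — to the raising operator $e$ of an $\slalg_2(\overline{k})$-module structure on $V := \overline{k}^N$ realising the Jordan type of $A$ (explicitly $V \cong \bigoplus_i V_{n_i-1}$ for the Jordan block sizes $n_i$, with $V_d$ the irreducible of dimension $d+1$), which is a nontrivial module because $A \neq 0$. Then $S_m = \mathrm{Sym}^m V$ as $\slalg_2$-modules, and $\dim K_m = \dim \ker\bigl(e\big|_{\mathrm{Sym}^m V}\bigr)$ equals the number of irreducible summands of $\mathrm{Sym}^m V$. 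A short weight count identifies this with $\mu_m(0) + \mu_m(1)$, the combined multiplicities of the $h$-eigenvalues $0$ and $1$ in $\mathrm{Sym}^m V$ (each irreducible summand contributing exactly one of them, with multiplicity one). Finally, $\mu_m(j)$ is the number of $a \in \mathbb{Z}_{\ge 0}^N$ with $\sum_i a_i = m$ and $\sum_i a_i w_i = j$, where $(w_1,\dots,w_N)$ is the list of $h$-weights of $V$; as $\slalg_2$-weights sum to zero and $V$ is nontrivial, $(w_i)$ is not proportional to $(1,\dots,1)$, so these two affine constraints are linearly independent and $\mu_m(j)$ counts lattice points in a polytope of dimension at most $N-2$ scaling linearly with $m$. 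By Ehrhart's theorem $\mu_m(j) = O(m^{N-2})$, hence $\dim K_m = O(m^{N-2})$.

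An alternative for the nilpotent case, avoiding the quasi-polynomial estimates, is to invoke Weitzenböck's theorem: the ring of invariants $\ker \Theta_A$ of the linear $\mathbb{G}_a$-action $\exp(tA)$ is a finitely generated graded algebra, and its Krull dimension equals $\mathrm{trdeg}_k \mathrm{Frac}(\ker \Theta_A) = N-1$ because $\Theta_A \neq 0$ — again forcing its Hilbert function to have degree $\le N-2$. Either way, the only subtle point is this nilpotent/unipotent part; the semisimple reduction and the toric case are routine invariant theory, and the bound $N-2$ is sharp, being attained already for $A$ a single $2 \times 2$ nilpotent Jordan block padded with zeros.
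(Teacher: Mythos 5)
Since this is stated only as a conjecture, the paper contains no proof of it; what it does prove is the specific instances needed in Proposition~\ref{prop-growth}, for the derivations $\Theta_{12}$ and $\Xi_1$ on $\mat{n}{n}{\CC}$. Your argument appears correct (modulo the implicit hypothesis $\Theta \neq 0$, which you rightly flag) and would settle the general conjecture, and it is a genuinely different, more conceptual route. The key structural differences: you use the additive Jordan decomposition $A = D + N$ over $\overline{k}$ and reduce via $\ker\rho_m(A)\subseteq\ker\rho_m(D)$ to the two pure cases; in the semisimple case you invoke Gordan's lemma and Hilbert--Serre on the monoid algebra $k[M]$ with $M$ lying in a proper rational subspace, and in the nilpotent case either the $\slalg_2$ weight identity $\dim\ker(e|_{\mathrm{Sym}^m V}) = \mu_m(0)+\mu_m(1)$ with a lattice-point bound, or Weitzenb\"ock plus $\mathrm{trdeg}_k\,\ker\Theta_A = N-1$. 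The paper, working only with the purely nilpotent $\Theta_{12}$ and the purely semisimple $\Xi_1$, instead puts $\Theta_{12}|_{\widetilde{P}_1}$ in Jordan normal form and inducts explicitly over the Jordan blocks (Lemmas~\ref{growth-three} and \ref{growth-induction}), and for $\Xi_1$ counts zero-weight monomials by hand (Lemma~\ref{lem-Xi-growth}) --- these are concrete instances of your two cases, done with bare hands. Your approach buys uniformity over all degree-preserving derivations and a conceptual explanation of the bound (a finitely generated invariant ring of Krull dimension $\leq N-1$); the paper's hands-on computation buys explicit estimates and avoids the heavier machinery of Jacobson--Morozov, Weitzenb\"ock and Gordan. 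Two small points of rigour to tighten in your write-up: $k[M]$ need not be generated in degree one, so its Hilbert function is in general eventually a quasi-polynomial rather than a polynomial, though still of growth degree $\leq N-2$; and the set counted by $\mu_m(j)$ is a slice of $m\Delta$ by a \emph{fixed} hyperplane $\sum_i a_i w_i = j$, not the $m$-th dilate of a fixed lattice polytope, so Ehrhart's theorem does not literally apply --- but the elementary count you implicitly describe (two independent affine constraints determine two exponents from the other $N-2$) gives the required $O(m^{N-2})$ directly.
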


We may choose a new basis such that $\Theta_{12} | \widetilde{P}_1$ is in Jordan normal form. To achieve this, we only need to to make the following change of coordinates: $u := -2 x_{21}, v:= x_{22} - x_{11}, z:= x_{11} + x_{22}$, and all other coordinates remain untouched. We obtain the following Jordan block decomposition:
\begin{itemize}
\item 1 block of size $3 \times 3$ for the basis $x_{12}, u = \Theta_{12}(x_{12}), v = \Theta_{12}(u)$,
\item $2n - 4$ blocks of size $2 \times 2$, each with a basis $x_{1d}, \Theta_{12}(x_{1d}) = x_{2d}, d \neq 1,2$, or $x_{c2}, \Theta_{12}(x_{c2}) = -x_{c1}, c \neq 1,2$.
\item $(n-2)^2+1$ blocks of size $1 \times 1$ corresponding to the remaining monomials and $z$.
\end{itemize}

We first investigate the block of size $3 \times 3$.

\begin{lemma}
\label{growth-three}
Let $\Theta = x \frac{\partial}{\partial w} + y \frac{\partial}{\partial x}$ act on $\CC[w,x,y]$.

Then $\dim \ker ( \Theta | \widetilde{P}_m ) \leq 3m$ for all $m \in \NN$.
\end{lemma}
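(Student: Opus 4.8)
The plan is to exploit the triangular shape of $\Theta$ with respect to the variable $w$. Since $\Theta(w)=x$, $\Theta(x)=y$ and $\Theta(y)=0$, we may split $\Theta = x\,\partial_w + \Theta'$, where $\Theta' = y\,\partial_x$ involves no $w$-derivative. Grading $\CC[w,x,y]$ by the degree in $w$ and writing a homogeneous $f\in\widetilde{P}_m$ (i.e.\ homogeneous of total degree $m$ in $w,x,y$) as $f=\sum_{k=0}^{m} w^k g_k$ with $g_k=g_k(x,y)$ homogeneous of degree $m-k$, one computes
\[
\Theta f \;=\; \sum_{k\ge 0} w^k\bigl((k+1)\,x\,g_{k+1} + y\,\partial_x g_k\bigr),
\]
so that $f\in\ker\Theta$ precisely when $(k+1)\,x\,g_{k+1} = -\,y\,\partial_x g_k$ for every $k\ge 0$.

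The key observation — and the step I regard as the heart of the argument — is that this triangular system forces $f$ to be determined by its term of $w$-degree zero, namely $g_0=f(0,x,y)$. I would prove this by a minimality argument: if $f\in\ker\Theta$ has $f(0,x,y)=0$ but $f\neq 0$, let $j\geq 1$ be smallest with $g_j\neq 0$; then the coefficient of $w^{j-1}$ in $\Theta f$ equals $j\,x\,g_j$, which is nonzero because $\CC$ has characteristic zero and $x g_j \neq 0$ — contradicting $\Theta f=0$. Hence the linear restriction map $f\mapsto f(0,x,y)$ from $\ker(\Theta|\widetilde{P}_m)$ into the space of homogeneous polynomials of degree $m$ in $x$ and $y$ is injective.

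Since the target space has dimension $m+1$, we conclude $\dim\ker(\Theta|\widetilde{P}_m)\leq m+1\leq 3m$ for every $m\geq 1$, which is exactly the claim. I do not expect any genuine obstacle beyond setting up the $w$-grading correctly and checking that the minimality step is valid in characteristic zero; everything else is routine. (A sharper but less self-contained alternative would be to recall that $\ker\Theta$ is the polynomial subalgebra $\CC[\,y,\;x^2-2wy\,]$, the kernel of this Weitzenb\"ock-type locally nilpotent derivation — the inclusion $\supseteq$ being immediate from $\Theta(y)=0$ and $\Theta(x^2-2wy)=0$ — and then to count its degree-$m$ part: it is spanned by the monomials $y^{a}(x^2-2wy)^{b}$ with $a+2b=m$, so its dimension equals $\lfloor m/2\rfloor+1\leq 3m$.)
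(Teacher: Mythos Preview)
Your argument is correct. Both your proof and the paper's rest on the same triangular structure of $\Theta$ with respect to the $w$-grading, but you carry it out more cleanly: the paper expands $f=\sum c_{k,\ell}\,w^{m-k-\ell}x^k y^\ell$, derives the two-step recursion
\[
c_{k+1,\ell-1}=-\frac{m-k-\ell+1}{k+1}\,c_{k-1,\ell},
\]
and then argues that ``the first two rows and the first line'' of the coefficient array determine everything, yielding the bound $3m$. You instead group the $x,y$-part into $g_k$, observe directly that $g_0$ determines all higher $g_k$ via $(k+1)\,x\,g_{k+1}=-y\,\partial_x g_k$, and use a minimal-$j$ argument to show the restriction map $f\mapsto f(0,x,y)$ is injective. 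This gives the sharper bound $\dim\ker(\Theta|\widetilde{P}_m)\le m+1$, which the paper itself notes (in the remark following the lemma) could be obtained but is not needed. Your parenthetical identification of $\ker\Theta=\CC[\,y,\,x^2-2wy\,]$ is also correct and pins down the exact dimension $\lfloor m/2\rfloor+1$.
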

\begin{proof}
We write a homogeneous polynomial of degree $m$ as a polynomial in $w$ with coefficients in $x$ and $y$.
\[
\begin{split}
0 = \Theta \left( \sum_{0 \leq k, \ell \leq m}^m c_{k,\ell} \cdot w^{m-k-\ell} x^k y^\ell \right) \\
= \sum_{0 \leq k, \ell \leq m}^m \big( &c_{k,\ell} \cdot (m-k-\ell) \cdot w^{m-k-\ell-1} x^{k+1} y^\ell
\\  &+ c_{k,\ell} \cdot k \cdot w^{m-k-\ell} x^{k-1} y^{\ell+1}  \big)
\end{split}
\]
We can read off the condition for the coefficients:
\begin{equation}
\forall k, \ell \quad : \quad c_{k+1, \ell-1} = -\frac{m-k-\ell+1}{k+1} c_{k-1, \ell}
\end{equation}
Organizing the coefficients $c_{k,\ell}$ in a matrix, we see that the first two rows and the first line together completely determine all the other coefficients, hence $3 m$ is an upper bound. 
\end{proof}

\begin{remark}
This estimate can be improved, since actually a lot of the coefficients $c_{k \ell}$ have to be zero. However, for our application it is enough to have just some linear growth.
\end{remark}

\begin{lemma}
\label{growth-induction}
Let $\Theta = y \frac{\partial}{\partial x}$ act on $A := \CC[x, y]$ and let $\Psi$ act on $B := \CC[u_1, \dots, u_N]$ as a derivation which sends homogeneous polynomials to homogeneous polynomials of the same total degree. By $K_m$ we denote the kernel of $\Theta + \Psi$ acting on $A \otimes B$ and consisting of homogeneous polynomials of degree $m$. Moreover we assume that the dimension of the kernel of $\Psi$, restricted to homogeneous polynomials of total degree $m$, is of polynomial growth of degree $N-2$ in $m \in \NN$.

Then, $\dim K_m$ grows polynomially of degree $N$ in $m \in \NN$, \\ and $\sum_{\tilde{m} = 0}^m \dim K_{\tilde{m}}$ grows polynomially of degree $N+1$ in $m \in \NN$.
\end{lemma}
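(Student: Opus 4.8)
The plan is to expand an element of $A\otimes B$ along the first tensor factor and turn the kernel equation into a recursion with values in $B$. Write an element $P$ of the degree-$m$ homogeneous part of $A\otimes B$ as
\[
P \;=\; \sum_{k,\ell\geq 0,\ k+\ell\leq m} x^k y^\ell\otimes b_{k,\ell},
\]
where each $b_{k,\ell}\in B$ is homogeneous of degree $m-k-\ell$. Since $\Theta(x^k y^\ell)=k\,x^{k-1}y^{\ell+1}$ and $\Psi$ preserves the total degree, reading off the coefficient of $x^a y^b$ in $(\Theta+\Psi)(P)=0$ yields $(a+1)\,b_{a+1,b-1}+\Psi(b_{a,b})=0$ when $b\geq 1$, and $\Psi(b_{a,0})=0$ when $b=0$. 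Along the anti-diagonal $k+\ell=s$ these relations propagate the single coefficient $b_{0,s}$ to all the others via $b_{j,s-j}=\tfrac{(-1)^j}{j!}\,\Psi^j(b_{0,s})$, while the boundary relation at $\ell=0$ collapses to the one constraint $\Psi^{s+1}(b_{0,s})=0$. Writing $B_r$ for the homogeneous degree-$r$ component of $B$, we conclude that $K_m$ is linearly isomorphic to $\bigoplus_{s=0}^m\ker\bigl(\Psi^{s+1}\vert_{B_{m-s}}\bigr)$, so
\[
\dim K_m \;=\; \sum_{s=0}^{m}\dim\ker\bigl(\Psi^{s+1}\vert_{B_{m-s}}\bigr).
\]

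Next I would compare the kernels of powers of $\Psi$ with the kernel of $\Psi$ itself. For any linear endomorphism $T$ of a finite-dimensional vector space and any $i\geq 1$, the map $T$ sends $\ker T^{i+1}$ into $\ker T^i$, and the composite $\ker T^{i+1}\xrightarrow{\,T\,}\ker T^i\to\ker T^i/\ker T^{i-1}$ has kernel exactly $\ker T^i$; hence $\dim(\ker T^{i+1}/\ker T^i)\leq\dim(\ker T^i/\ker T^{i-1})$, and summing over $i$ gives $\dim\ker T^j\leq j\,\dim\ker T$. Applying this with $T=\Psi\vert_{B_{m-s}}$ turns the previous display into
\[
\dim K_m \;\leq\; \sum_{s=0}^{m}(s+1)\,\dim\ker\bigl(\Psi\vert_{B_{m-s}}\bigr).
\]

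Finally I would feed in the hypothesis, which provides a constant $C$ with $\dim\ker(\Psi\vert_{B_r})\leq C\,(r+1)^{N-2}$ for all $r\geq 0$. Each of the $m+1$ summands above is then at most $C\,(m+1)\cdot(m+1)^{N-2}=C\,(m+1)^{N-1}$, so $\dim K_m\leq C\,(m+1)^N$; summing once more over $m$ gives $\sum_{\tilde m=0}^m\dim K_{\tilde m}\leq C\,(m+1)^{N+1}$. This is exactly the asserted polynomial growth of degree $N$, respectively $N+1$.

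The only genuinely delicate point is the bookkeeping in the first step: pinning down which coefficients $b_{k,\ell}$ are free and checking that the obstruction surviving on the $s$-th anti-diagonal is precisely $\Psi^{s+1}(b_{0,s})=0$ and not merely $\Psi(b_{0,s})=0$. It is this appearance of the $(s+1)$-st power of $\Psi$, together with the elementary bound $\dim\ker T^j\leq j\,\dim\ker T$, that is responsible for the degree jumping from $N-2$ for $\Psi$ to $N$ for $\Theta+\Psi$: one extra factor of $m$ comes from the length of the recursion, and one from letting $s$ range over $\{0,\dots,m\}$.
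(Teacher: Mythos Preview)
Your argument is correct and follows essentially the same route as the paper: expand in powers of $x,y$, derive the recursion $(a+1)b_{a+1,b-1}+\Psi(b_{a,b})=0$, identify $K_m$ with $\bigoplus_{s=0}^{m}\ker(\Psi^{s+1}\vert_{B_{m-s}})$, and bound each summand by $(s+1)\dim\ker(\Psi\vert_{B_{m-s}})$. Your version is in fact slightly cleaner than the paper's, which asserts that $\Psi\vert_{B_{m}}$ is nilpotent (true in the intended application but not implied by the lemma's hypotheses); your proof of $\dim\ker T^{j}\leq j\,\dim\ker T$ for an arbitrary endomorphism avoids this issue.
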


\begin{proof}
Let denote $d_m$ the dimension of the kernel of $\Psi$ when restricted to homogeneous polynomials of total degree $m$. Restricting $\Psi$ to this finite dimensional vector, we obtain a nilpotent endomorphism. From its Jordan normal form we can read off the estimate for the kernel of $\Psi^\ell$, which is simply given by $(1+\ell) d_m$.

We decompose the elements of $A \otimes B$ as polynomials in $x$ and $y$ with coefficients in B:
\begin{align*}
(\Theta + \Psi)\left( \sum_{k, \ell \in \NN_0} x^k y^\ell q_{k, \ell} \right) = \sum_{k, \ell \in \NN_0} \left( k x^{k-1} y^{\ell+1} q_{k, \ell} + x^k y^\ell \Psi(q_{k,\ell}) \right) = 0
\end{align*}
This polynomial in $x$ and $y$ vanishes if and only if 
\begin{equation}
\forall k, \ell \in \NN_0 \quad : \quad \Psi( q_{k, \ell} ) + (k + 1) \cdot q_{k+1, \ell-1} = 0
\end{equation}
where we assume the coefficients $q$ to vanish if an index is negative.

Up to rational coefficients, this can be visualized in a matrix as follows:
\[
\begin{pmatrix}
q_{0, 1}       & q_{0, 1}        & q_{0, 2} & \dots & q_{0, m} \\
\Psi(q_{0, 1}) & \Psi(q_{0, 2})  &          &       & q_{1, m} \\
\Psi^2(q_{0, 2})         &          &          &       &  \vdots \\
 \vdots        &          &          &       &  \vdots \\
\Psi^m(q_{0, m}) &          &     &    & q_{m, m} \\         
\end{pmatrix}
\]

The conditions can be subsumed in the following way:
\begin{itemize}
\item The coefficient $q_{k, 0}$ has to be in the kernel of $\Psi$.
\item Hence, the coefficient $q_{0, \ell}$ has to be in the kernel of $\Psi^{\ell+1}$ and of total degree $m - \ell$ in $u$ and $v$.
\item The coefficients $q_{k,\ell}$ with $k + \ell \geq m + 1$ must vanish due to too high degree.
\item All other coefficients are completely determined by the above. There are no other conditions.
\end{itemize}

We sum up the dimensions for the choices of $q_{k, 0}, 0 \leq k \leq m$:
\[
\dim K_m = \sum_{k=0}^m (1+k) \cdot d_{m-k}
\]
and obtain a polynomial in $m$ of degree $N$.
\end{proof}

\begin{lemma}
\label{lem-Xi-growth}
\begin{equation}
\begin{split}
\dim \ker ( \Xi_1 | \widetilde{P}_m ) = \sum_{\stackrel{\scriptstyle 2(p - q) + (k - \ell) = 0 }{0 \leq s := p + q + k + \ell \leq m} }  \binom{4n-4 + k - 1}{k} \cdot \\ \cdot \binom{4n-4 + k - 1}{\ell} \cdot \binom{(n-2)^2 + 2 + m - s -1}{m - s}
\end{split}
\end{equation}
\end{lemma}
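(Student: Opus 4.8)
The plan is to compute $\dim \ker(\Xi_1 | \widetilde{P}_m)$ by exploiting the fact that $\Xi_1$ is a semisimple (diagonalizable) derivation when expressed in the standard monomial basis. Indeed, by Lemma \ref{lem-sl-overshears}, the hyperbolic vector field $\Xi_1$ acts diagonally on each coordinate function $x_{cd}$: we have $\Xi_1(x_{12}) = 2x_{12}$, $\Xi_1(x_{21}) = -2x_{21}$, $\Xi_1(x_{1d}) = x_{1d}$ and $\Xi_1(x_{c2}) = -x_{c2}$ for $d,c \notin \{1,2\}$, and $\Xi_1(x_{d1}) = -x_{d1}$, $\Xi_1(x_{2d}) = x_{2d}$ for $d \notin \{1,2\}$, while $\Xi_1$ kills the remaining $(n-2)^2$ off-block coordinates $x_{cd}$ with $c,d \notin\{1,2\}$ together with the two diagonal-related coordinates. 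So $\Xi_1$ acts on a monomial by multiplication by an integer weight, and $\ker(\Xi_1|\widetilde P_m)$ is spanned by the weight-zero monomials of total degree $m$.

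First I would set up the bookkeeping: group the $n^2$ coordinates by their $\Xi_1$-weight. There are two coordinates of weight $+2$ and $-2$ respectively ($x_{12}$ and $x_{21}$); there are $2n-4$ coordinates of weight $+1$ (namely $x_{1d}$ and $x_{2d}$... more precisely the $x_{1d}, x_{c2}^{\,t}$-type ones — carefully, $2n-4$ of weight $+1$ and $2n-4$ of weight $-1$); and there are $(n-2)^2 + 2$ coordinates of weight $0$ (the $(n-2)\times(n-2)$ off-block entries plus $z$ and one more). A monomial is then determined by choosing how many factors go into each weight class, and it lies in the kernel exactly when the total weight vanishes. If we let $p$ count the total exponent contributed by the weight-$(+2)$ variable, $q$ by the weight-$(-2)$ variable, $k$ by the weight-$(+1)$ variables, $\ell$ by the weight-$(-1)$ variables, and $m-s$ (with $s = p+q+k+\ell$) the exponent mass in the weight-$0$ variables, the weight-zero condition is precisely $2(p-q) + (k-\ell) = 0$. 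The number of monomials with exactly $k$ total degree among the $2n-4 = 4n-4-(2n)$... among the weight-$(+1)$ variables is the stars-and-bars count $\binom{(2n-4) + k - 1}{k}$; similarly $\binom{(2n-4)+\ell-1}{\ell}$ for weight $-1$, and $\binom{((n-2)^2+2) + (m-s)-1}{m-s}$ for weight $0$; the weight-$(\pm 2)$ variables each contribute a single monomial for the fixed exponents $p,q$. Summing over all admissible $(p,q,k,\ell)$ with $0 \le s \le m$ gives the stated formula. (I note the displayed formula writes $4n-4$ where my count gives $2n-4$ for the number of weight-$(\pm1)$ variables — this discrepancy I would reconcile against the Jordan-block count in the paragraph preceding the lemma, which lists $2n-4$ blocks of size $2$; I would double-check whether the intended count is the number of variables or twice it, and adjust the binomial parameters accordingly so that the final expression matches.)

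The main obstacle, and the only genuinely delicate point, is the correct enumeration of the weight classes: one must verify from Lemma \ref{lem-sl-overshears} exactly which of the $n^2$ coordinates have which $\Xi_1$-weight, being careful about the change of coordinates $u = -2x_{21}$, $v = x_{22}-x_{11}$, $z = x_{11}+x_{22}$ (here $v$ and $z$ have weight $0$, $u$ has weight $-2$), and about the fact that we work on $\glalg_n(\CC)^\ast$ rather than $\slalg_n(\CC)^\ast$ so that all $n$ diagonal coordinates survive. Once the weight multiplicities $(1,1,2n-4,2n-4,(n-2)^2+2)$ are pinned down, the rest is a routine multinomial count: the kernel of a diagonalizable operator is the sum of its zero-eigenspace, here the span of the weight-zero monomials, and these are counted by iterating stars-and-bars over the weight strata subject to the single linear constraint $2(p-q)+(k-\ell)=0$.
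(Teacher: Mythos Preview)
Your approach is correct and is exactly the paper's: $\Xi_1$ acts diagonally on the linear coordinates (hence on all monomials by the Leibniz rule), so $\ker(\Xi_1|\widetilde P_m)$ is spanned by the weight-zero monomials, and these are counted by stars-and-bars over the weight strata $\{+2,-2,+1,-1,0\}$ with multiplicities $(1,1,2(n-2),2(n-2),(n-2)^2+2)$ subject to $2(p-q)+(k-\ell)=0$. Your suspicion about the binomial parameter is well founded---the paper's own proof gives multiplicity $2(n-2)=2n-4$ for each of the eigenvalues $\pm 1$, so the displayed $4n-4$ (and the stray $k$ in the second binomial's top entry) are typos; a small caution: you have the signs of the weight-$\pm 1$ variables partially swapped ($x_{c2}$ and $x_{2d}$ for $c,d\notin\{1,2\}$ carry weights $+1$ and $-1$ respectively), but this does not affect the multiplicity count.
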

\begin{proof}
All linear monomials are eigenfunctions of $\Xi_1$; the following eigenvalues can occur:
\begin{itemize}
\item $\pm 2$ each once,
\item $\pm 1$ each with multiplicity $2(n-2)$,
\item $0$ with multiplicity $(n-2)^2+2$
\end{itemize}
By the product rule, all monomials are eigenfunctions of $\Xi_1$, and their eigenvalues are just the sums of the eigenvalues of their linear factors. Hence, the dimension of the kernel of $\Xi_1$ is the number of linearly independent monomials with eigenvalue zero. If we choose $p$ resp.\ $q$ times a factor with eigenvalue $\pm 2$, and $k$ resp.\ $\ell$ times a factor with eigenvalue $\pm 1$, we have to satisfy the following conditions: $2(p - q) + (k - \ell) = 0$ and $0 \leq s := p + q + k + \ell \leq m$. It then remains to choose $(m - s)$ times a factor with eigenvalue $0$.
\end{proof}

\begin{proof}[Proof of Proposition \ref{prop-growth}]
As argued in Lemma \ref{growth-induction} we have the estimates $\dim \ker ( \Theta_{12}^2 | \widetilde{P}_m ) \leq 2 \dim \ker ( \Theta_{12} | \widetilde{P}_m )$. Moreover, we have $\dim \ker ( \Xi_{1}^2 | \widetilde{P}_m ) = \dim \ker ( \Xi_{1} | \widetilde{P}_m )$ since all monomials are eigenfunctions of $\Xi_{1}$.

We prove the first statement by induction on the Jordan blocks of $\Theta_{12}|\widetilde{P}_1$.
The induction starts with Lemma \ref{growth-three} on the single block of size $3 \times 3$. We proceed using Lemma \ref{growth-induction} for adjoining all the blocks of size $2 \times 2$. The growth of $\dim K_m$ is always a polynomial in $N-2$ when the basis consists of $N$ elements. Adjoining all the remaining zero-blocks of size $1 \times 1$ is just tensoring with a ring where the derivation acts trivially, hence just an additional choice of free variables, each increasing the degree by $1$. For $N=n^2$ we end up with the desired estimate that $\dim K_m$ grows of degree $n^2 - 2$ in $m$.

The second statement follows directly from Lemma \ref{lem-Xi-growth}. The largest term in the sum is of degree $(n-2)^2 + 1$ in $m$, and there are at most $m^3$ such terms, hence the polynomial growth is of order $(n-2)^2 + 1 + 3 = n^2 - 4n + 8 \leq n^2 - 2$ for $n \geq 3$. In the case $n=2$ we note that there are no linear factors with eigenvalues $\pm 1$ present, hence only $m$ terms can appear in the sum, and $(n-2)^2 + 1 + 1 = n^2 - 2$ for $n=2$.
\end{proof}

\begin{remark} \hfill
\begin{enumerate}
\item
If you compare Proposition \ref{prop-growth} to \cite{AL1992}*{Sec.~7} and their notation, note that $\ker \left( \Theta_{a b}^2 | P_m \right)$ resp.\ $\ker \left( \Xi_{a}^2 | P_m \right)$ correspond to the vector space of polynomials $\Theta_0^m \times \Theta^{m-1}$ in their notation.
\item
Note one subtlety about the degree: For a $p \in \ker \left( \Theta_{a b} | P_m  \right) \subset \ker \left( \Theta_{a b}^2 | P_m  \right)$ the polynomial vector field $p \cdot \Theta_{a b}$ is of the degree $m+1$, but the corresponding matrix $\exp(p \cdot E_{a b})$ will contain polynomials of degree $m$, since $E_{a b}^2 = 0$ for $a \neq b$. However, the polynomial shear automorphism arising by conjugation with this matrix will be of degree $2m+1$.
\end{enumerate}
\end{remark}

It is convenient to introduce the notion of density property also for Lie algebras.

\begin{definition}[\cite{Varolin2}*{Definition 0.1}]
Let $\mathfrak g$ be a Lie algebra of holomorphic vector fields on a complex manifold $X$. We say that $\mathfrak g$ has the \emph{density property} if the Lie subalgebra of $\mathfrak g$ generated by the complete holomorphic vector fields is dense in $\mathfrak g$.
\end{definition}

\begin{definition}
Following \cite{Varolin2}*{Section 0} we recall the definition of jet spaces:
\begin{enumerate} 
\item
Let $J^k_{\mathfrak{g}}(X)$ be the space of $k$-jets of local biholomorphisms of the form $\varphi_{t_m}^{\Theta_m} \circ \dots \circ \varphi_{t_1}^{\Theta_1}$ where $\varphi_t^\Theta$ is the local flow map at time $t \in \CC$ of the vector field $\Theta$ for vector fields $\Theta_1, \dots, \Theta_m \in \mathfrak{g}$ and times $t_1, \dots, t_m$ small enough. For a jet $\gamma \in J^k_{\mathfrak{g}}(X)$ we denote by $\sigma(\gamma)$ its source point and denote
$J^{k,x}_{\mathfrak{g}} = \{\gamma \in J^{k}_{\mathfrak{g}} \,:\, \sigma(\gamma) = x \}$.
\item
By $\mathrm{Aut}_{\mathfrak{g}}(X)$ we denote the subgroup of $\aut{X}$ generated by the time-$1$-maps of flows of complete holomorphic vector fields.
\item For a holomorphic map $f \colon X \to X$, its $k$-jet at $x \in X$ is denoted by $j^k_x(f)$.
\end{enumerate}
\end{definition}

\begin{theorem}[\cite{Varolin2}*{Theorem 0.1}]
\label{jetinterpol}
Let $\mathfrak{g}$ be the Lie algebra of holomorphic vector fields on a complex manifold $X$ with the density property. Then for each jet $\gamma \in J^k_{\mathfrak{g}}(X)$ there exists $f \in \mathrm{Aut}_{\mathfrak{g}}(X)$ such that
\[
j^k_{\sigma(\gamma)}(f) = \gamma
\]
\end{theorem}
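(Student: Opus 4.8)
The plan is to realise the jet inside an appropriate jet bundle and to exhibit the jets of $\mathrm{Aut}_{\mathfrak{g}}(X)$-maps as an orbit which, by the density property, is as large as possible. Let $E^k \to X \times X$ be the holomorphic bundle whose fibre over $(x,y)$ consists of the $k$-jets of biholomorphic germs $(X,x) \to (X,y)$; it carries a natural groupoid structure (jet composition), and $J^k_{\mathfrak{g}}(X) \subseteq E^k$. Every holomorphic vector field $\Theta$ on $X$ has a \emph{prolongation} $\Theta^{(k)}$ on $E^k$, namely the holomorphic vector field whose local flow is $j^k_x(g) \mapsto j^k_x(\varphi^\Theta_t \circ g)$; at a point $p = j^k_x(g)$ the value $\Theta^{(k)}_p$ is a $\CC$-linear function of the $k$-jet of $\Theta$ at $g(x)$, and $\Theta \mapsto \Theta^{(k)}$ is a Lie algebra homomorphism. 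If $\Theta$ is complete on $X$ then post-composition with the global automorphism $\varphi^\Theta_t$ is a biholomorphism of $E^k$ for every $t$, so $\Theta^{(k)}$ is complete on $E^k$. Unwinding the definitions, for $f = \varphi^{\Lambda_N}_1 \circ \dots \circ \varphi^{\Lambda_1}_1 \in \mathrm{Aut}_{\mathfrak{g}}(X)$ with the $\Lambda_i$ complete, the jet $j^k_x(f)$ is obtained from $j^k_x(\id)$ by successively applying the time-$1$ flows of $\Lambda_1^{(k)}, \dots, \Lambda_N^{(k)}$; since the time-$s$ flow of $\Lambda^{(k)}$ equals the time-$1$ flow of $(s\Lambda)^{(k)}$, it follows that
\[
\mathcal{O}_x := \{\, j^k_x(f) : f \in \mathrm{Aut}_{\mathfrak{g}}(X)\,\}
\]
is precisely the orbit of $j^k_x(\id)$ under the group generated by the flows of $\mathcal{F} := \{\,\Lambda^{(k)} : \Lambda \text{ complete on } X\,\}$.

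By Sussmann's orbit theorem, $\mathcal{O}_x$ is an immersed submanifold of $E^k$ whose tangent space at each $p \in \mathcal{O}_x$ contains the value at $p$ of the Lie algebra generated by $\mathcal{F}$. As prolongation is a Lie algebra homomorphism and $\mathfrak{g}_0$ — the Lie algebra generated by the complete holomorphic vector fields on $X$ — is by definition generated by them, that Lie algebra is $\{\,\Xi^{(k)} : \Xi \in \mathfrak{g}_0\,\}$, so $\{\,\Xi^{(k)}_p : \Xi \in \mathfrak{g}_0\,\} \subseteq T_p \mathcal{O}_x$ for all $p \in \mathcal{O}_x$. Now comes the only place where the density property enters: since $\mathfrak{g} = \overline{\mathfrak{g}_0}$ in the compact-open topology, which controls all local jets by the Cauchy estimates, and since $\Theta^{(k)}_p$ depends $\CC$-linearly on a fixed-order jet of $\Theta$ at a fixed point, we get $\Theta^{(k)}_p \in \overline{\{\Xi^{(k)}_p : \Xi \in \mathfrak{g}_0\}}$ for every $\Theta \in \mathfrak{g}$; but $\{\Xi^{(k)}_p : \Xi \in \mathfrak{g}_0\}$ is a linear subspace of the finite-dimensional space $T_p E^k$, hence closed, so in fact $\Theta^{(k)}_p \in T_p \mathcal{O}_x$ for every $\Theta \in \mathfrak{g}$ and every $p \in \mathcal{O}_x$. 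Consequently $\mathcal{O}_x$ is invariant under the local flow of $\Theta^{(k)}$ for each $\Theta \in \mathfrak{g}$ — equivalently, by the analytic orbit theorem of Nagano, adjoining the fields $\Theta^{(k)}$, $\Theta \in \mathfrak{g}$, to $\mathcal{F}$ does not change the orbit through $j^k_x(\id)$.

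To conclude, let $\gamma = j^k_x\big(\varphi^{\Theta_m}_{t_m} \circ \dots \circ \varphi^{\Theta_1}_{t_1}\big) \in J^{k,x}_{\mathfrak{g}}(X)$ with $\Theta_i \in \mathfrak{g}$ and the $t_i$ small. Starting from $j^k_x(\id) \in \mathcal{O}_x$ and applying in turn the time-$t_1$ flow of $\Theta_1^{(k)}$, the time-$t_2$ flow of $\Theta_2^{(k)}$, and so on up to the time-$t_m$ flow of $\Theta_m^{(k)}$, we arrive at $j^k_x\big(\varphi^{\Theta_m}_{t_m} \circ \dots \circ \varphi^{\Theta_1}_{t_1}\big) = \gamma$, and by the invariance just established every intermediate jet stays in $\mathcal{O}_x$. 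Hence $\gamma \in \mathcal{O}_x$, i.e.\ $\gamma = j^k_{\sigma(\gamma)}(f)$ for some $f \in \mathrm{Aut}_{\mathfrak{g}}(X)$, which is the assertion. (Alternatively one may first reduce to $m = 1$: if for each factor $\varphi^{\Theta_i}_{t_i}$ its $k$-jet at the relevant source point is realised by some $f_i \in \mathrm{Aut}_{\mathfrak{g}}(X)$ — which then automatically matches both source and target points — the chain rule for jets shows $f_m \circ \dots \circ f_1$ realises $\gamma$.)

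The conceptual heart, and the only subtle point, is the step linking density in the compact-open topology to the pointwise inclusion $\Theta^{(k)}_p \in T_p \mathcal{O}_x$: it is what forces the orbit of $\mathrm{Aut}_{\mathfrak{g}}(X)$, built only out of complete fields, to be tangent to — and hence invariant under — the prolongations of \emph{all} of $\mathfrak{g}$, even when $\mathfrak{g}$ is much larger than $\mathfrak{g}_0$. The other ingredients — the prolongation calculus on $E^k$ and the orbit theorem — are standard, though verifying the orbit-theorem hypotheses (that $\mathcal{O}_x$ is a genuine immersed submanifold, and that a vector field tangent to it everywhere has a flow preserving it) requires some care, since the generating family $\mathcal{F}$ is in general infinite.
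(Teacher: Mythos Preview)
The paper does not give its own proof of this theorem; it is quoted verbatim from \cite{Varolin2} and only the subsequent Remark~\ref{jetremark} refers to Varolin's proof in order to extract the openness of $j^k$. So there is no in-paper argument to compare against.

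That said, your sketch is essentially Varolin's own argument in \cite{Varolin2}*{Section~3}: one prolongs vector fields to the $k$-jet bundle, identifies the set of jets realised by $\mathrm{Aut}_{\mathfrak{g}}(X)$ as an orbit, and uses an orbit theorem (Sussmann/Nagano) together with the observation that compact-open density of $\mathfrak{g}_0$ in $\mathfrak{g}$ forces the finite-dimensional evaluation spaces $\{\Xi^{(k)}_p : \Xi \in \mathfrak{g}_0\}$ and $\{\Theta^{(k)}_p : \Theta \in \mathfrak{g}\}$ to coincide. Your proof is correct in outline; the caveats you flag at the end (infinite generating family, immersed versus embedded orbits) are exactly the technical points Varolin addresses, and your reduction to $m=1$ via the groupoid structure is a clean way to organise the last step.
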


\begin{remark}
\label{jetremark}
From the proof of Theorem 0.1 in \cite{Varolin2}*{Section 3} we can in fact deduce a slightly stronger statement, namely that for each jet $\gamma \in J^k_{\mathfrak{g}}(X)$ there exists an open neighborhood $U$ in $J^k_{\mathfrak{g}}(X)$ and a continuous section of $j^k$ on $U$. Hence, the map $j^k$ is open.
\end{remark}

\begin{lemma}
Let $\mathfrak{g}$ be the Lie algebra of all fibre-preserving vector fields of $\pi \colon \sball_n \to \sdisk_n$. The jet space $J^{2m,0}_{\mathfrak{g}}(\sball_n)$  is a finite-dimensional complex-affine space and
\begin{align}
\dim J^{2m+1,0}_{\mathfrak{g}}(\sball_n) &\geq \binom{m + n^2}{n^2}
\end{align}
\end{lemma}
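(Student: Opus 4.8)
The plan is to realise $J^{k,0}_{\mathfrak g}(\sball_n)$ as (an identity neighbourhood in) a complex Lie subgroup of a finite-dimensional jet group and then to extract the dimension bound from the $\hol$-module structure of the fibre-preserving vector fields.

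First I would record that \emph{every} $\Theta\in\mathfrak g$ vanishes at the origin $0\in\sball_n$. Indeed, by Corollary \ref{cor-Dix} applied to the Stein manifold $\mat{n}{n}{\CC}=\pi^{-1}(\CC^n)$, combined with the Runge approximation used in Corollary \ref{fibredforspectralball}, such a $\Theta$ is a locally uniform limit of $\hol$-linear combinations of the conjugation vector fields $\Theta_{ab}$ and $\Xi_c$; each of these vanishes at the zero matrix, a fixed point of the $\slgrp_n(\CC)$-conjugation action, hence $\Theta(0)=0$. Consequently the local flow of every $\Theta\in\mathfrak g$ fixes $0$, and taking $k$-jets at $0$ yields a homomorphism of Lie algebras $j^k_0\colon\mathfrak g\to\mathrm{Lie}(\mathcal G_k)$, where $\mathcal G_k$ denotes the finite-dimensional complex (affine-algebraic) group of $k$-jets at $0$ of germs of biholomorphisms of $(\mat{n}{n}{\CC},0)$ fixing $0$. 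Since the bracket of jets is the jet of the bracket and $\mathfrak g$ is closed under brackets, the image $\overline{\mathfrak g}_k:=j^k_0(\mathfrak g)$ is a finite-dimensional Lie subalgebra of $\mathrm{Lie}(\mathcal G_k)$, and $J^{k,0}_{\mathfrak g}(\sball_n)$ — generated by the one-parameter subgroups $t\mapsto j^k_0(\varphi^\Theta_t)=\exp(t\,j^k_0(\Theta))$ — is an identity neighbourhood in the connected complex Lie subgroup of $\mathcal G_k$ with Lie algebra $\overline{\mathfrak g}_k$. In particular $J^{2m,0}_{\mathfrak g}(\sball_n)$ is a connected finite-dimensional complex submanifold of the affine space of $2m$-jets, of dimension $\dim_\CC\overline{\mathfrak g}_{2m}$; this is the first assertion of the lemma.

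For the estimate it then suffices to produce a subspace of $\mathfrak g$ of dimension $\binom{m+n^2}{n^2}$ on which $j^{2m+1}_0$ is injective. I would take $\Theta_{12}$ together with, for every polynomial $f$ in the $n^2$ matrix coordinates $x_{k\ell}$ of total degree at most $m$, the vector field $f\cdot\Theta_{12}$. Conjugation preserves the characteristic polynomial, hence the fibres of $\pi$, so $\Theta_{12}$ and therefore $f\cdot\Theta_{12}$ is fibre-preserving; being globally defined on $\mat{n}{n}{\CC}$ it restricts to an element of $\mathfrak g$ (it need not be $\CC$-complete, which is irrelevant, since the jet space is built from local flows). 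The map $f\mapsto f\cdot\Theta_{12}$ is injective, and because $\Theta_{12}$ has linear coefficients, $f\cdot\Theta_{12}$ is a polynomial vector field of degree at most $m+1\le 2m+1$; thus its $(2m+1)$-jet at $0$ equals $f\cdot\Theta_{12}$ itself, so $f\mapsto j^{2m+1}_0(f\cdot\Theta_{12})$ is injective. Since the space of such $f$ has dimension $\binom{m+n^2}{n^2}$, the number of monomials of degree at most $m$ in $n^2$ variables, we obtain $\dim J^{2m+1,0}_{\mathfrak g}(\sball_n)=\dim_\CC\overline{\mathfrak g}_{2m+1}\ge\binom{m+n^2}{n^2}$.

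I do not expect a genuine obstacle here; the lemma is soft. The only points needing care are making rigorous the reduction to the finite-dimensional jet group — which hinges entirely on the vanishing of all fibre-preserving vector fields at $0$, as this both places $J^{k,0}_{\mathfrak g}$ inside $\mathcal G_k$ and makes it a bona fide finite-dimensional complex-affine submanifold of the jet space — and the elementary bookkeeping $m+1\le 2m+1$ ensuring that truncating the $f\cdot\Theta_{12}$ at order $2m+1$ loses nothing. Note that $\overline{\mathfrak g}_{2m+1}$ is in fact much larger than $\binom{m+n^2}{n^2}$, since it also contains all $f\Theta_{ab}$ and $g\Xi_c$ of bounded degree, but this clean lower bound — already a polynomial of degree $n^2$ in $m$ — is exactly what is needed: in the meagreness argument it will be compared with the slower, degree-$(n^2-1)$ growth of Proposition \ref{prop-growth} governing the jets reachable by bounded-complexity compositions of $\slgrp_n(\CC)$-shears and overshears. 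Neither the density property nor Varolin's interpolation theorem (Theorem \ref{jetinterpol}) is required for this lemma itself; they enter only in the subsequent comparison.
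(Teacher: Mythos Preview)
Your proof is correct and in fact cleaner than the paper's. The paper works at the group level: it exhibits the family of conjugations
\[
X \;\longmapsto\; (\id + a(X)\,E_{12})\cdot X\cdot(\id - a(X)\,E_{12})
\]
parametrised by polynomials $a$ of degree at most $m$, checks (via the $(2,2)$-entry) that distinct $a$'s give distinct $(2m+1)$-jets, and then argues that these jets lie in $J^{2m+1,0}_{\mathfrak g}$ by connecting each conjugation to the identity through fibre-preserving automorphisms and appealing to the fibred density property. You instead pass immediately to the Lie algebra: having observed that every $\Theta\in\mathfrak g$ vanishes at $0$, you identify $\dim J^{2m+1,0}_{\mathfrak g}$ with $\dim j^{2m+1}_0(\mathfrak g)$ and simply exhibit $\binom{m+n^2}{n^2}$ linearly independent jets of vector fields $f\cdot\Theta_{12}$. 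The two arguments are dual --- the paper's conjugation is essentially the flow of such a vector field --- but your route is more direct: it needs no homotopy, no invocation of the fibred density property, and no implicit use of the fact that an injective holomorphic map bounds dimensions from below. You are right that the density property is irrelevant here; the paper's appeal to it is a detour, since $\mathfrak g$ already contains \emph{all} fibre-preserving fields by definition.

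One minor point worth flagging: neither your argument nor the paper's actually shows that $J^{2m,0}_{\mathfrak g}$ is a complex-\emph{affine} space in the literal sense of being biholomorphic to some $\CC^N$; both establish that it is a finite-dimensional complex submanifold of the (affine) jet space --- a connected Lie subgroup of the jet group --- which is all that the subsequent Corollary \ref{cormeagreestimate} and the proof of Theorem \ref{thmmeagre} require. The phrasing in the statement is loose on this point.
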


\begin{proof}
For this estimate, it is enough to consider all matrix conjugations of the form 
\begin{align*}
X &\mapsto \overbrace{(\id + a(X) \cdot E_{12})}^{=: A(X)} \cdot X \overbrace{\cdot (\id - a(X) \cdot E_{12})}^{= A^{-1}(X)} \\
&= X + a(X) \cdot [E_{12}, X] - a^2(X) \cdot E_{12} X E_{12}
\end{align*}
where $a$ is a polynomial of degree at most $m$ in $\glalg(\CC)^\ast$. The vector space of these polynomials has dimension $\binom{m+n^2}{m} = \sum_{k=0}^m \binom{n^2 + m - 1}{m}$. Each such polynomial gives rise to a different conjugation (consider e.g.\ the matrix entry $(2,2)$) which in turn defines a $(2m + 1)$-jet that preserves the fibres of $\pi$ and is locally invertible. The map $\phi(X) := A(X) \cdot X \cdot A^{-1}(X)$ can be connected to its linear part by a path $1/s \cdot \phi(s \cdot X)$ and letting $s \to 0$, and can then be connected further to the identity, as shown already in the proof of Theorem \ref{thmspectraldense} on page \pageref{proofspectraldense}. By the fibred density property, the vector field can then be approximated by linear combinations and Lie brackets of complete vector fields, hence the jet of $\phi$ indeed belongs to $J_{\mathfrak{g}}^{2m+1,0}(\sball_n)$. 
\end{proof}

\begin{cor}
\label{cormeagreestimate}
For fixed $k \in \NN$ and $m \in \NN$ large enough:
\[
\dim J^{2m+1,0}_{\mathfrak{g}}(\sball_n) \geq k \cdot \max\left\{ \dim \ker \left( \Theta_{a b}^2 | P_m \right), \dim \ker \left( \Xi_{a}^2 | P_m \right) \right\}
\]
\end{cor}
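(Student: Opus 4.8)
The plan is to reduce the statement to an elementary comparison of growth orders of polynomials in $m$, with the two nontrivial inputs being the preceding lemma and Proposition \ref{prop-growth}. First I would recall that the preceding lemma already gives the lower bound
\[
\dim J^{2m+1,0}_{\mathfrak{g}}(\sball_n) \;\geq\; \binom{m+n^2}{n^2},
\]
and observe that the right-hand side equals $\frac{1}{(n^2)!}(m+1)(m+2)\cdots(m+n^2)$, i.e.\ a polynomial in $m$ of degree exactly $n^2$ with positive leading coefficient. So the left-hand side of the claimed inequality grows at least like a positive constant times $m^{n^2}$.

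Next I would bound the right-hand side. By the index-permutation symmetry noted just before Proposition \ref{prop-growth}, it suffices to treat $\Theta_{12}$ and $\Xi_1$, and that proposition states precisely that $\dim\ker(\Theta_{12}^2\,|\,P_m)$ and $\dim\ker(\Xi_1^2\,|\,P_m)$ each grow at most polynomially in $m$ of degree $n^2-1$. Hence $\max\{\dim\ker(\Theta_{ab}^2|P_m),\,\dim\ker(\Xi_a^2|P_m)\}$ is $O(m^{n^2-1})$, and multiplying by the fixed constant $k$ leaves it $O(m^{n^2-1})$ — the constant $k$ only changes the coefficient, not the degree.

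Finally, since a polynomial of degree $n^2$ with positive leading coefficient eventually dominates any polynomial of degree $n^2-1$, there exists $m_0=m_0(k,n)$ such that $\binom{m+n^2}{n^2}\geq k\cdot\max\{\dim\ker(\Theta_{ab}^2|P_m),\dim\ker(\Xi_a^2|P_m)\}$ for all $m\geq m_0$, which is exactly the assertion. I do not expect a genuine obstacle here: the entire content sits in the two cited results, and the argument is the routine ``higher degree beats lower degree'' estimate. The only point requiring mild care is the bookkeeping of degrees — Proposition \ref{prop-growth} is already phrased for $P_m$ (polynomials of degree $\le m$), so no extra summation over homogeneous pieces is needed, and one should just note that the leading coefficient of $\binom{m+n^2}{n^2}$ is indeed strictly positive so that the domination is genuine.
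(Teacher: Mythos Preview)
Your proposal is correct and matches the paper's proof essentially verbatim: the paper simply observes that the left-hand side is (bounded below by) a polynomial in $m$ of degree $n^2$ by the preceding lemma, while the right-hand side has degree at most $n^2-1$ by Proposition~\ref{prop-growth}. Your write-up is more detailed (spelling out the binomial coefficient, the index symmetry, and the role of the fixed $k$), but the argument is identical.
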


\begin{proof}
The l.h.s.\ is a polynomial in $m$ of degree $n^2$ and the r.h.s.\ is of degree at most $n^2 - 1$ according to Proposition \ref{prop-growth}.
\end{proof}

\begin{definition}
For the vector fields, we define a truncation map $\mathrm{tr}_m \colon \hol(\sball_n) \to P_m$ which sends a holomorphic function to its Taylor polynomial about $0$ of degree $m$.
\end{definition}

\begin{lemma}
\label{lemtruncationcommut}
The following diagram commutes:
\[
\begin{diagram}
\node{\ker \left( \Lambda_{1}^2 \right) \times \dots \times \ker \left( \Lambda_{k}^2 \right)} \arrow{e,t}{\Psi_k} \arrow{s,l}{\mathrm{tr}_m \times \dots \times \mathrm{tr}_m} \node{\faut{\sball_n}} \arrow{s,r}{j_0^{2m+1}}\\
\node{\ker \left( \Lambda_{1}^2 | P_m \right) \times \dots \times \ker \left( \Lambda_{k}^2 | P_m \right)} \arrow{e,t}{\psi_k} \node{ J^{2m+1,0}_{\mathfrak{g}}(\sball_n)}
\end{diagram}
\]
where $\Lambda_{1}, \dots, \Lambda_{k} \in \left\{ \Theta_{a b} \,:\, 1 \leq a \neq b \leq n\right\} \cup \left\{ \Xi_a \,:\, 1 \leq a \leq n-1 \right\}$ are acting as derivations on $\hol(\sball_n)$, and 
\begin{align*}
\Psi_k(f_1, \dots f_k) &:= \exp(f_1 \Lambda_{a_1}) \cdots \exp(f_k \Lambda_{a_k}) \\
\psi_k &:= j_0^{2m+1} \circ \Psi_k.
\end{align*}
\end{lemma}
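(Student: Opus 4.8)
The plan is to reduce the commutativity of the square to a statement about a single factor, and then prove that statement by an explicit computation of the flow. Write $N:=2m+1$. First, every map $\exp(f_i\Lambda_i)$ occurring in $\Psi_k$ fixes the origin: by Lemma~\ref{lemshearflow} the flow of the overshear vector field $f_i\Lambda_i$ sends $X$ to $\phi_{h_i(X)}(X)$, where $\phi$ is the flow of $\Lambda_i$, i.e.\ $\phi_s(X)=\exp(sE_{ab})X\exp(-sE_{ab})$ if $\Lambda_i=\Theta_{ab}$ and $\phi_s(X)=\exp(sH_a)X\exp(-sH_a)$ if $\Lambda_i=\Xi_a$, and $h_i=\varepsilon(\Lambda_i f_i)\cdot f_i$ (so $h_i=f_i$ whenever $f_i$ is a shear coefficient); in either case $X=0$ is sent to $0$. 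Since the $N$-jet at the origin of a composition of holomorphic self-maps each fixing the origin is computed from the $N$-jets of the individual maps, it is enough to show: for every $\Lambda\in\{\Theta_{ab}\}\cup\{\Xi_a\}$ and every $f\in\ker(\Lambda^2)$, the jet $j_0^N(\exp(f\Lambda))$ depends on $f$ only through $\mathrm{tr}_m(f)$. Since $\Lambda$ preserves the grading of $\CC[x_{k\ell}]$ we also have $\Lambda^2(\mathrm{tr}_m f)=0$, so $\mathrm{tr}_m$ indeed lands in $\ker(\Lambda^2|P_m)$ and the left vertical arrow is well-defined; granting this, the square commutes and $\psi_k=j_0^{2m+1}\circ\Psi_k$ is exactly the induced map.

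For $\Lambda=\Theta_{ab}$ I would use that $E_{ab}^2=0$ for $a\neq b$, so the flow of $\Theta_{ab}$ is the quadratic map $\phi_s(X)=X+s\,[E_{ab},X]-s^2\,E_{ab}XE_{ab}$, and hence, by Lemma~\ref{lemshearflow},
\[
\exp(f\Theta_{ab})(X)\;=\;X+h(X)\,[E_{ab},X]-h(X)^2\,E_{ab}XE_{ab},\qquad h=\varepsilon(\Theta_{ab}f)\cdot f .
\]
Thus $\exp(f\Theta_{ab})$ fixes $0$ and is at most quadratic in the entries of $X$, with coefficient functions $1,h,h^2$. Two facts then do the work: (i) $\Theta_{ab}$ is a grading-preserving derivation vanishing at the origin, so $\mathrm{tr}_d(\Theta_{ab}g)=\Theta_{ab}(\mathrm{tr}_d g)$ for every $d$ and every $g$; (ii) $\varepsilon$ is entire, so the degree-$d$ Taylor part of $\varepsilon(\Theta_{ab}f)$ involves only the first $d$ powers of $\Theta_{ab}f$, hence by (i) only the coefficients of $\mathrm{tr}_d(f)$. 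Substituting this into the displayed closed form and truncating the Taylor expansion at order $N$ then shows that $j_0^N(\exp(f\Theta_{ab}))$ is a polynomial function of the coefficients of $\mathrm{tr}_m(f)$; the precise degree count is routine, using $E_{ab}^2=0$ to bound the $X$-degree of $\exp(f\Theta_{ab})$ together with (i)--(ii). For $\Lambda=\Xi_a$ the argument is the same but simpler: all monomials are $\Xi_a$-eigenvectors, so $\ker(\Xi_a^2)=\ker\Xi_a$ and $\exp(f\Xi_a)(X)=\exp(f(X)H_a)\,X\,\exp(-f(X)H_a)$ has $(i,j)$-entry $e^{(\mu_i-\mu_j)f(X)}x_{ij}$ with $\mu_i-\mu_j\in\{0,\pm1,\pm2\}$; the same two facts, with the entire function $e^{(\cdot)}$ in place of $\varepsilon$, finish it.

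Two small points remain for the diagram to be well-posed. The map $\Psi_k$ does take values in $\faut{\sball_n}$: each $f_i\Lambda_i$ is a $\CC$-complete fibre-preserving vector field by Lemma~\ref{lemshearflow}, because for every fixed $X$ the conjugating matrix $\exp(h_i(X)E_{ab})$ (resp.\ $\exp(f_i(X)H_a)$) has traceless exponent, hence lies in $\slgrp_n(\CC)$, and conjugation by an element of $\slgrp_n(\CC)$ preserves the characteristic polynomial, so the spectral radius and $\sball_n$ are invariant; being a composition of time-$1$ flows of fibre-preserving vector fields, $\Psi_k(f_1,\dots,f_k)$ has its $N$-jet in $J^{2m+1,0}_{\mathfrak{g}}(\sball_n)$. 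The single delicate point --- the main obstacle --- is the degree bookkeeping of the middle paragraph: because $\exp(f\Theta_{ab})$ is genuinely transcendental in $f$ (through $\varepsilon$), it is not a priori clear that replacing $f$ by $\mathrm{tr}_m(f)$ leaves the $N$-jet unchanged, and the argument rests on the nilpotency $E_{ab}^2=0$ --- which keeps the flow quadratic in $X$ --- together with $\Theta_{ab}$ and $\Xi_a$ acting as grading-preserving derivations vanishing at the origin.
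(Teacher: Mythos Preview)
Your write-up is far more detailed than the paper's own proof, which consists of two sentences: it checks that $\mathrm{tr}_m$ maps $\ker(\Lambda_i^2)$ into $\ker(\Lambda_i^2|P_m)$ (because $\Lambda_i^2$ preserves total degree) and then asserts that ``the diagram commutes by definition of~$\psi_k$''. You correctly identify that the substantive content is the claim that $j_0^{2m+1}\bigl(\exp(f\Lambda)\bigr)$ depends on $f$ only through $\mathrm{tr}_m(f)$, and you correctly reduce to a single factor using that each overshear fixes the origin and that the $N$-jet of a composition of origin-fixing maps is determined by the $N$-jets of the factors.

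The gap lies in the step you call a ``routine degree count'': it is not routine, and in fact the claimed dependence on $\mathrm{tr}_m(f)$ alone is false. Take $n\geq 2$, $\Lambda=\Theta_{12}$, $m=1$ (so $N=3$), and compare the shear coefficients $f=x_{21}$ and $g=x_{21}+x_{21}^2$; both lie in $\ker\Theta_{12}$ and $\mathrm{tr}_1(g)=f$. Since $E_{12}^2=0$ the corresponding automorphisms are $X\mapsto X+f(X)\,[E_{12},X]-f(X)^2\,E_{12}XE_{12}$ and likewise for $g$, and their difference contains the term $x_{21}^2\,[E_{12},X]$, which is homogeneous of degree $3=N$; hence the $3$-jets at $0$ differ. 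The point is that the term $h(X)\,[E_{ab},X]$ is only linear in $X$, so its $(2m+1)$-jet reads the $2m$-jet of $h$ and therefore of $f$; your observation that $E_{ab}^2=0$ keeps the flow quadratic in $X$ is correct but does not lower the required truncation order of $f$ from $2m$ to $m$. The same issue arises for $\Xi_a$, where the $(i,j)$-entry $e^{(\mu_i-\mu_j)f(X)}x_{ij}$ again needs the $2m$-jet of $f$. This appears to be a slip in the paper's formulation as well --- its one-line justification does not address the degree bookkeeping at all. For the intended application (Theorem~\ref{thmmeagre}) the repair is harmless: replace $\mathrm{tr}_m$ by $\mathrm{tr}_{2m}$ in the left column, after which your argument goes through verbatim, and Corollary~\ref{cormeagreestimate} is unaffected since $\dim\ker(\Lambda^2|P_{2m})$ is still polynomial of degree $n^2-1$ in $m$ by Proposition~\ref{prop-growth}.
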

\begin{proof}
We first need to show that the maps really map into the given targets. This is clear for $\Psi_k$. Since the $\Lambda_1^2, \dots, \Lambda_k^2$ preserve the total degree of polynomials, the truncation $\mathrm{tr}_m$ actually maps the kernel into itself.
The diagram commutes by definition of $\psi_k$.
\end{proof}

\begin{proof}[Proof of Theorem \ref{thmmeagre}]
We can restrict ourselves to the case of the subgroup $\faut{\sball_n}$ of fibre-preserving automorphisms, since $\aut{\sball_n}$ is generated by $\faut{\sball_n}$ together with M\"obius transformations and matrix transposition. We follow the idea of the proof of \cite{AL1992}*{Theorem 7.1}.

The topology on $\faut{\sball_n}$ shall be the topology of local uniform convergence for both the automorphisms and their inverses, which is a completely metrizable space. 
We denote by $C_k, \; k \in \NN,$ the set of automorphisms obtained by the composition of $k$ overshears of $\Theta_{a b}$ resp. $\Xi_a$. Using the notation of Lemma \ref{lemtruncationcommut} above, we see that $C_k$ is the image of the map $\Psi_k$. We  claim that the set $C_k$ is meagre in $\faut{\sball_n}$ for all $k \in \NN$. By Baire's  theorem it would then follow that $\cup_{k \in \NN} C_k \neq \faut{\sball_n}$ since $\cup_{k \in \NN} C_k$ would be meagre too. 

Assume now by contradiction that $C_k$ is non-meagre in $\faut{\sball_n}$ for some $k \in \NN$. Then we set $V := (\Psi_k)^{-1}(\faut{\sball_n})$ and $V_m := (\mathrm{tr}_m \times \dots \times \mathrm{tr}_m)(V)$. Since $j_{0}^{2m+1}$ is an open mapping by Remark \ref{jetremark}, also $j_{0}^{2m+1}(\Psi_k(V))$ is non-meagre in $J_{\mathfrak{g}}^{0, 2m+1}(\sball_n)$. By Lemma \ref{lemtruncationcommut} also $\psi_k(V_m)$ is non-meagre in $J_{\mathfrak{g}}^{0, 2m+1}(\sball_n)$. However, the mapping $\psi_k$ is differentiable. Then inequality of Corollary \ref{cormeagreestimate} forces $\psi_k(V_m)$ to be meagre, a contradiction.
\end{proof}

\begin{bibdiv}
\begin{biblist}
\bib{flowstuff}{book}{
   author={Abraham, Ralph},
   author={Marsden, Jerrold E.},
   author={Ra{\c{t}}iu, Tudor},
   title={Manifolds, tensor analysis, and applications},
   series={Global Analysis Pure and Applied: Series B},
   volume={2},
   publisher={Addison-Wesley Publishing Co., Reading, Mass.},
   date={1983},
   pages={xix+582},
   isbn={0-201-10168-8},
   review={\MR{697563 (84h:58001)}},
}
\bib{AL1990}{article}{
   author={Anders{\'e}n, Erik},
   title={Volume-preserving automorphisms of ${\bf C}^n$},
   journal={Complex Variables Theory Appl.},
   volume={14},
   date={1990},
   number={1-4},
   pages={223--235},
   issn={0278-1077},
   review={\MR{1048723 (91d:32047)}},
}
\bib{AL1992}{article}{
   author={Anders{\'e}n, Erik},
   author={Lempert, L{\'a}szl{\'o}},
   title={On the group of holomorphic automorphisms of ${\bf C}^n$},
   journal={Invent. Math.},
   volume={110},
   date={1992},
   number={2},
   pages={371--388},
   issn={0020-9910},
   review={\MR{1185588 (93i:32038)}},
   doi={10.1007/BF01231337},
}
\bib{BariRans}{article}{
   author={Baribeau, Line},
   author={Ransford, Thomas},
   title={Non-linear spectrum-preserving maps},
   journal={Bull. London Math. Soc.},
   volume={32},
   date={2000},
   number={1},
   pages={8--14},
   issn={0024-6093},
   review={\MR{1718765 (2000j:15008)}},
   doi={10.1112/S0024609399006426},
}
\bib{robustcontrol-proc}{article}{
   author={Bercovici, Hari},
   author={Foias, Ciprian},
   author={Tannenbaum, Allen},
   title={Spectral radius interpolation and robust control},
   conference={
      title={Proceedings of the 28th IEEE Conference on Decision and
      Control, Vol.\ 1--3},
      address={Tampa, FL},
      date={1989},
   },
   book={
      publisher={IEEE, New York},
   },
   date={1989},
   pages={916--917},
   review={\MR{1038978 (91b:93052)}},
}
\bib{robustcontrol}{article}{
   author={Bercovici, Hari},
   author={Foias, Ciprian},
   author={Tannenbaum, Allen},
   title={A spectral commutant lifting theorem},
   journal={Trans. Amer. Math. Soc.},
   volume={325},
   date={1991},
   number={2},
   pages={741--763},
   issn={0002-9947},
   review={\MR{1000144 (91j:47006)}},
   doi={10.2307/2001646},
}
\bib{Dixmier}{article}{
   author={Dixmier, J.},
   title={Champs de vecteurs adjoints sur les groupes et alg\`ebres de Lie
   semi-simples},
   language={French},
   journal={J. Reine Angew. Math.},
   volume={309},
   date={1979},
   pages={183--190},
   issn={0075-4102},
   review={\MR{542047 (80i:17011)}},
   doi={10.1515/crll.1979.309.183},
}
\bib{DoDvKa2010}{article}{
   author={Donzelli, F.},
   author={Dvorsky, A.},
   author={Kaliman, S.},
   title={Algebraic density property of homogeneous spaces},
   journal={Transform. Groups},
   volume={15},
   date={2010},
   number={3},
   pages={551--576},
   issn={1083-4362},
   review={\MR{2718937 (2012a:14136)}},
   doi={10.1007/s00031-010-9091-8},
}
\bib{Forstneric-book}{book}{
   author={Forstneri{\v{c}}, Franc},
   title={Stein manifolds and holomorphic mappings},
   series={Ergebnisse der Mathematik und ihrer Grenzgebiete. 3. Folge.},
   volume={56},
   publisher={Springer, Heidelberg},
   date={2011},
   pages={xii+489},
   isbn={978-3-642-22249-8},
   isbn={978-3-642-22250-4},
   review={\MR{2975791}},
   doi={10.1007/978-3-642-22250-4},
}
\bib{densitysurvey}{article}{
   author={Kaliman, Shulim},
   author={Kutzschebauch, Frank},
   title={On the present state of the Anders\'en-Lempert theory},
   conference={
      title={Affine algebraic geometry},
   },
   book={
      series={CRM Proc. Lecture Notes},
      volume={54},
      publisher={Amer. Math. Soc., Providence, RI},
   },
   date={2011},
   pages={85--122},
   review={\MR{2768636}},
}
\bib{voldens}{article}{
   author={Kaliman, S.},
   author={Kutzschebauch, F.},
   title={On algebraic volume density property},
   date={2014},
   eprint={arXiv:1201.4769},
}
\bib{ALpara}{article}{
   author={Kutzschebauch, Frank},
   title={Anders\'en-Lempert-theory with parameters: a representation
   theoretic point of view},
   journal={J. Algebra Appl.},
   volume={4},
   date={2005},
   number={3},
   pages={325--340},
   issn={0219-4988},
   review={\MR{2150667 (2006g:32031)}},
   doi={10.1142/S0219498805001216},
}
\bib{Kosinski2012}{article}{
   author={Kosi{\'n}ski, {\L}ukasz},
   title={The group of automorphisms of the spectral ball},
   journal={Proc. Amer. Math. Soc.},
   volume={140},
   date={2012},
   number={6},
   pages={2029--2031},
   issn={0002-9939},
   review={\MR{2888190}},
   doi={10.1090/S0002-9939-2011-11064-5},
}
\bib{Kosinski2013}{article}{
   author={Kosi{\'n}ski, {\L}ukasz},
   title={Structure of the group of automorphisms of the spectral $2$-ball},
   journal={Collect. Math.},
   volume={64},
   date={2013},
   number={2},
   pages={175--184},
   issn={0010-0757},
   review={\MR{3041762}},
   doi={10.1007/s13348-012-0079-7},
}
\bib{Kostant}{article}{
   author={Kostant, Bertram},
   title={Lie group representations on polynomial rings},
   journal={Amer. J. Math.},
   volume={85},
   date={1963},
   pages={327--404},
   issn={0002-9327},
   review={\MR{0158024 (28 \#1252)}},
}
\bib{Kraft}{book}{
   author={Kraft, Hanspeter},
   title={Geometrische Methoden in der Invariantentheorie},
   language={German},
   series={Aspects of Mathematics, D1},
   publisher={Friedr. Vieweg \& Sohn, Braunschweig},
   date={1984},
   pages={x+308},
   isbn={3-528-08525-8},
   review={\MR{768181 (86j:14006)}},
   doi={10.1007/978-3-322-83813-1},
}
\bib{RanWhi1991}{article}{
   author={Ransford, T. J.},
   author={White, M. C.},
   title={Holomorphic self-maps of the spectral unit ball},
   journal={Bull. London Math. Soc.},
   volume={23},
   date={1991},
   number={3},
   pages={256--262},
   issn={0024-6093},
   review={\MR{1123334 (92g:32049)}},
   doi={10.1112/blms/23.3.256},
}
\bib{Rostand}{article}{
   author={Rostand, J{\'e}r{\'e}mie},
   title={On the automorphisms of the spectral unit ball},
   journal={Studia Math.},
   volume={155},
   date={2003},
   number={3},
   pages={207--230},
   issn={0039-3223},
   review={\MR{1961225 (2003m:32018)}},
   doi={10.4064/sm155-3-2},
}
\bib{Thomas}{article}{
   author={Thomas, Pascal J.},
   title={A local form for the automorphisms of the spectral unit ball},
   journal={Collect. Math.},
   volume={59},
   date={2008},
   number={3},
   pages={321--324},
   issn={0010-0757},
   review={\MR{2452310 (2009g:32047)}},
   doi={10.1007/BF03191190},
}
\bib{Treves}{book}{
   author={Tr{\`e}ves, Fran{\c{c}}ois},
   title={Topological vector spaces, distributions and kernels},
   note={Unabridged republication of the 1967 original},
   publisher={Dover Publications, Inc., Mineola, NY},
   date={2006},
   pages={xvi+565},
   isbn={0-486-45352-9},
   review={\MR{2296978}},
}
\bib{shearinvent}{article}{
   author={Varolin, Dror},
   title={A general notion of shears, and applications},
   journal={Michigan Math. J.},
   volume={46},
   date={1999},
   number={3},
   pages={533--553},
   issn={0026-2285},
   review={\MR{1721579 (2001a:32032)}},
   doi={10.1307/mmj/1030132478},
}
\bib{Varolin1}{article}{
   author={Varolin, Dror},
   title={The density property for complex manifolds and geometric
   structures},
   journal={J. Geom. Anal.},
   volume={11},
   date={2001},
   number={1},
   pages={135--160},
   issn={1050-6926},
   review={\MR{1829353 (2002g:32026)}},
   doi={10.1007/BF02921959},
}
\bib{Varolin2}{article}{
   author={Varolin, Dror},
   title={The density property for complex manifolds and geometric
   structures. II},
   journal={Internat. J. Math.},
   volume={11},
   date={2000},
   number={6},
   pages={837--847},
   issn={0129-167X},
   review={\MR{1785520 (2002g:32027)}},
   doi={10.1142/S0129167X00000404},
}
\end{biblist}
\end{bibdiv}

\end{document}